\theoremstyle{plain}
\newtheorem{thm}{Theorem}[section]
\newtheorem{pro}[thm]{Proposition}
\newtheorem{lem}[thm]{Lemma}
\newtheorem{theoreme}{Theorem}
\newtheorem*{conj}{Conjecture}
\theoremstyle{definition}
\newtheorem{defi}[thm]{Definition}
\newtheorem{eg}[thm]{Example}
\newtheorem{egs}[thm]{Examples}
\newtheorem{rem}[thm]{Remark}
\def\og{\leavevmode\raise.3ex\hbox{$\scriptscriptstyle\langle\!\langle$~}}
\def\fg{\leavevmode\raise.3ex\hbox{~$\!\scriptscriptstyle\,\rangle\!\rangle$}}
\newcommand{\transp}[1]{{\vphantom{#1}}^{\mathit t}{#1}}
\numberwithin{equation}{section}       
\begin{document}
\selectlanguage{english}

\author{Julie D\'eserti}

\address{Institut de Math\'ematiques de Jussieu, Universit\'e Paris $7,$ Projet G\'eom\'etrie et Dynamique, Site Chevaleret, Case $7012,$ $75205$ Paris Cedex 13, France.}
\email{deserti@math.jussieu.fr}

\author{Julien Grivaux}

\address{Institut de Math\'ematiques de Jussieu, Universit\'e Paris $6,$ Projet Topologie et G\'eom\'etrie Alg\'ebrique, Site Chevaleret, Case $7012,$ $75205$ Paris Cedex 13, France.}
\email{jgrivaux@math.jussieu.fr}

\title{Automorphisms of rational surfaces with positive topological entropy}
\maketitle

\begin{abstract}
A complex compact surface which carries an automorphism of positive topological entropy has been proved by Cantat to be either a torus, a K$3$ surface, an Enriques surface or a rational surface. Automorphisms of rational surfaces are quite mysterious and have been recently the object of intensive studies. In this paper, we construct several new examples of automorphisms of rational surfaces with positive topological entropy. We also explain how to define and to count parameters in families of birational maps of $\mathbb{P}^2(\mathbb{C})$ and in families of rational surfaces.

\noindent{\it 2010 Mathematics Subject Classification. --- $14$E$07,$ $32$H$50,$ $37$B$40.$}
\end{abstract}

\begin{footnotesize}
\tableofcontents
\end{footnotesize}

\bigskip\bigskip\bigskip\bigskip

\section*{Introduction}

\noindent If $X$ is a topological space and $f$ is a homeomorphism of $X,$ the topological entropy of $f,$ denoted by $\mathrm{h}_{\text{top}}(f),$ is a nonnegative number measuring the complexity of the dynamical system $(X,f).$ If $X$ is a compact K\"{a}hler manifold and~$f$ is a biholomorphism of $X,$ then $\mathrm{h}_{\text{top}}(f)=\sup_{1\leq p\leq \dim X}\delta_p(f),$ where $\delta_p(f)$ is the $p$-th dynamical degree of $f,$ {\it i.e.} the spectral radius of $f^*$ acting on $\mathrm{H}^{p,p}(X)$ (\emph{see} \cite{Gr1, Gr2, Yo}). When $X$ is a complex compact surface (K\"{a}hler or not) carrying a biholomorphism of positive topological entropy, Cantat has proved \cite{C} that $X$ is either a complex torus, a K$3$ surface, an Enriques surface or a nonminimal rational surface. Although automorphisms of complex tori are easy to describe, it is rather difficult to construct automorphisms on K$3$ surfaces or rational surfaces (constructions and dynamical properties of automorphisms of K$3$ surfaces can be found in \cite{C} and \cite{Mc2}).

 \medskip

\noindent The first examples of rational surfaces endowed with biholomorphisms of positive entropy are due to Kummer and Coble \cite{Co}. The Coble surfaces are obtained by blowing up the ten nodes of a nodal sextic in $\mathbb{P}^2(\mathbb{C})$ and the Kummer surfaces are desingularizations of quotients of complex $2$-tori by involutions with fixed points.
Obstructions to the existence of such biholomorphisms on rational surfaces are also known: if $X$ is a rational surface and $f$ is a biholomorphism of $X$ such that $f$ has positive topological entropy, then the representation of the automorphism group of $X$ in $\mathrm{GL}(\mathrm{Pic}(X))$ given by $g\mapsto g^*$ has infinite image. This implies by a result of Harbourne \cite{Harbourne} that its kernel is finite, so that $X$ has no nonzero holomorphic vector field. A second consequence which follows from \cite[Th. \!5]{Nag} is that $X$ is \textit{basic}, {\it i.e.} can be obtained by successive blowups from the projective plane $\mathbb{P}^2(\mathbb{C});$ furthermore, the number of blowups must be at least ten.

\medskip

\noindent The first infinite families of examples have been constructed independently in \cite{MC} and \cite{BK2} by different methods; the rational surfaces are obtained by blowing up distinct points of $\mathbb{P}^2(\mathbb{C}).$ The corresponding automorphisms come from birational quadratic maps of $\mathbb{P}^2(\mathbb{C})$ which are of the form $A\sigma,$ where $A$ is in $\mathrm{PGL}(3;\mathbb{C})$ and $\sigma$ is the Cremona involution. These constructions yield a countable family of examples.

\medskip

\noindent More recently, Bedford and Kim constructed arbitrary big holomorphic families of rational surfaces endowed with biholomorphisms of positive entropy. These families are explicitly given as follows:

\begin{theoreme}[\cite{BK3}]\label{thmbk}
{\sl Consider two integers $n\geq 3$ and $k\geq 2$ such that $n$ is odd and $(n,k)\not=(3,2).$ There exists a nonempty subset $C_k$ of $\mathbb{R}$ such that, if~$c\in C_k$ and $a=(a_2,a_4,\ldots,a_{n-3})\in \mathbb{C}^{\frac{n-3}{2}},$ the map
\begin{equation}\label{bedfordkk}
f_a\colon(x:y:z)\to\big(xz^{n-1}:z^n:x^n-yz^{n-1}+cz^n+\sum_{\stackrel{\ell=2}{\ell \text{ even}}}^{n-3}a_\ell x^{\ell+1}z^{n-\ell-1}\big)
\end{equation}

\noindent can be lifted to an automorphism of positive topological entropy of a rational surface $X_a.$ The surfaces $X_a$ are obtained by blowing up $k$ infinitely near points of length $2n-1$ on the invariant line $\{x=0\}$ and form a holomorphic family over the parameter space given by the $a_j\,'s.$ If $k=2$ and $n\geq 5$ is odd, then there exists a neighborhood of $0$ in $\mathbb{C}^{\frac{n-3}{2}}$ such that for all distinct elements $a$ and $a'$ in $U$ with $a_{n-3}\neq 0,$ $X_a$ and $X_{a'}$ are not biholomorphic.}
\end{theoreme}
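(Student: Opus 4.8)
The plan is to follow the orbit-data strategy that underlies all constructions of this type: one turns the birational map $f_a$ of $\mathbb{P}^2(\mathbb{C})$ into an automorphism by blowing up a finite configuration of infinitely near points adapted to the exceptional structure of $f_a$, then reads off the entropy from the induced action on cohomology, and finally recovers the parameter from the position of the blown-up points on an invariant curve. \textbf{Step 1 (exceptional structure and the closing condition).} First I would write $f_a$ in the given chart and compute its Jacobian in order to locate the curves contracted by $f_a$ and its points of indeterminacy; the line $\{x=0\}$ is invariant and the relevant data is concentrated along $\{x=0\}$ and $\{z=0\}$. For each contracted curve one tracks the forward $f_a$-orbit of its image point: to lift $f_a$ to an automorphism one needs each such orbit, after exactly $2n-1$ steps, to reach a point of indeterminacy. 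This requirement is an algebraic equation in $c$, with the $a_\ell$ entering as lower-order perturbations, and I would \emph{define} $C_k$ to be the set of real solutions $c$; the hypotheses $n$ odd and $(n,k)\neq(3,2)$ are precisely what is needed to make such a real root exist, so that $C_k\neq\emptyset$. The integer $2n-1$ is the common length of the orbit segments and $k$ counts how many there are.

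\textbf{Step 2 (the automorphism and its entropy).} Fixing $c\in C_k$, I would blow up the $k$ chains of length $2n-1$ along $\{x=0\}$ to obtain $X_a$ and check that the lift of $f_a$ is an automorphism, the point being that after these blowups every formerly contracted curve is carried isomorphically onto an exceptional component and every indeterminacy point is resolved. The induced action $f_a^{*}$ on $\mathrm{Pic}(X_a)\cong\mathbb{Z}^{1+k(2n-1)}$ is determined purely by the combinatorics of the orbit data: it permutes the exceptional classes and acts on the class of a line in the standard quadratic way. I would then compute its characteristic polynomial and show that, under the stated numerical conditions, it is a Salem polynomial with a single root $\lambda>1$ off the unit circle. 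Since $X_a$ is rational, hence projective and Kähler, the formula $\mathrm{h}_{\mathrm{top}}(f_a)=\sup_{p}\delta_p(f_a)$ recalled in the introduction yields $\mathrm{h}_{\mathrm{top}}(f_a)=\log\lambda>0$.

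\textbf{Step 3 (holomorphic family and moduli).} Once $c$ is fixed, the blown-up points depend holomorphically, indeed algebraically, on $a$, so the $X_a$ form a holomorphic family over $\mathbb{C}^{(n-3)/2}$ and the lifted automorphisms vary holomorphically as well. The delicate assertion is the last one: for $k=2$ and $n\geq5$, distinct parameters with $a_{n-3}\neq0$ give non-biholomorphic surfaces near $a=0$. The strategy is to exploit that $X_a$ carries an $f_a$-invariant anticanonical curve $Y_a$ whose smooth locus is a one-dimensional group, on which the blown-up points lie and on which $f_a$ acts by multiplication by a unit $\delta$. A biholomorphism $X_a\to X_{a'}$ must fix the canonical class, hence map $Y_a$ to $Y_{a'}$ and conjugate the automorphisms up to the finite ambiguity allowed by Harbourne's theorem, since the kernel of $g\mapsto g^{*}$ is finite. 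Encoding the mutual position of the blown-up points on $Y_a$ as a point of $\mathbb{C}^{*}$ then produces a biholomorphism invariant.

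The main obstacle is precisely this last computation. The combinatorial and cohomological parts of Steps 1 and 2 are governed by the rigid orbit-data formalism and are essentially bookkeeping, whereas separating the \emph{genuine} modulus $a_{n-3}$ from the finite group of symmetries of the marked pair $(X_a,Y_a)$ requires an explicit local analysis of the modulus map. Concretely, I would expand this invariant in Taylor series at $a=0$ and show that its leading nontrivial term detects $a_{n-3}$; the hypothesis $a_{n-3}\neq0$ and the restriction to a small neighborhood $U$ are exactly what guarantee that the map $a\mapsto X_a$ is injective there, the higher symmetries being unable to absorb the top coefficient.
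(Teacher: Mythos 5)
You should be aware at the outset that the paper contains no proof of this statement: it is Bedford--Kim's theorem, quoted from \cite{BK3} (hence the attribution in the theorem header), and the paper only \emph{uses} it --- in Example~\ref{winner}, where the family $(f_a)$ is shown to be generically effective as a family of birational maps, and in \S\ref{conjecture}, where the relation $Af_a=f_{s^2a}B$ for $n=5$ produces non-trivial families. So there is no internal proof to compare yours against; the benchmark is the Bedford--Kim argument itself, whose mechanism the paper reproduces for the maps $\Phi_n$ in \S\ref{casgal}. Measured against that, your outline has the right overall shape (orbit data, towers of blowups, action on $\mathrm{Pic}$, Salem factor), but Step 1 is concretely wrong. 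The map $f_a$ contracts exactly one curve: its Jacobian is $nz^{3(n-1)}$, so $\mathrm{Exc}\,f_a=\{z=0\}$, with image $(0\!:\!0\!:\!1)$ and unique indeterminacy point $(0\!:\!1\!:\!0)$, both on the invariant line $\{x=0\}$, on which $f_a$ acts by the M\"obius map $w\mapsto 1/(c-w)$, $w=y/z$. The closing condition defining $C_k$ is that the orbit of $(0\!:\!0\!:\!1)$ under this M\"obius map consist of exactly $k$ points, the last one being the indeterminacy point; the number $2n-1$ is not an orbit length but the depth of the tower of infinitely near points that must be blown up at each of these $k$ points (exactly as in \S\ref{cons}, where the clusters $\widehat{\xi}_1,\widehat{\xi}_2$ of $\Phi_n$ have length $2n-1$). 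You have the two integers playing each other's roles. Moreover the condition on the base orbit is necessary but not sufficient: one must check that the lift carries the whole infinitely near cluster onto the target cluster --- the analogue of the gluing conditions of Proposition~\ref{gluglu} --- and it is this matching, not the existence of a real root, that dictates the parity restriction on $n$ and the precise shape of $f_a$ (only even $\ell$ occur). Finally, $(n,k)\neq(3,2)$ has nothing to do with $C_k\neq\emptyset$: it excludes the single case where the spectral radius of $f_a^*$ is $1$, i.e. it belongs to your Step 2; compare Theorem~\ref{nqcq2}, the paper's own two-cluster construction, where $\lambda=\frac{(n-1)+\sqrt{(n-1)^2-4}}{2}$ degenerates to $1$ at $n=3$, which is exactly why that theorem assumes $n\geq 4$.

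The serious gap is in Step 3. You assert that a biholomorphism $T\colon X_a\to X_{a'}$ "must \ldots conjugate the automorphisms up to the finite ambiguity allowed by Harbourne's theorem". Harbourne's result, as used in the introduction of the paper, says that the kernel of $\mathrm{Aut}(X)\to\mathrm{GL}(\mathrm{Pic}(X))$ is finite when the image is infinite; it says nothing about an arbitrary biholomorphism between two \emph{different} surfaces intertwining the two given automorphisms, and there is no a priori reason that $T$ should do so. The actual difficulty is that $T^*$ is merely an isometry of Picard lattices fixing the canonical class, and the group of such isometries is infinite (it contains the infinite-order isometry $f_a^*$ itself), so $T$ need not respect the two blowdown structures; reducing $T$, after correction by automorphisms and a genuinely finite ambiguity, to a linear map of $\mathbb{P}^2$ carrying one point configuration to the other is precisely the hard content of the Bedford--Kim non-biholomorphism proof (via an explicit control of the negative curves on $X_a$, or equivalent structure). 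Your sketch defers exactly this reduction together with the invariant computation detecting $a_{n-3}$ ("the main obstacle is precisely this last computation"), so the only delicate assertion of the theorem --- the one the present paper explicitly singles out as stronger than generic effectiveness of the deformation --- is not actually proved by your proposal.
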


\noindent These examples are generalizations of the birational cubic map introduced by \cite{HV, HV2} and studied by \cite{Ta1, Ta2, Ta3}.

\medskip

\noindent The present paper has two distinct aims: the first one is to give a general procedure to construct examples of rational surfaces carrying biholomorphisms of positive entropy in a more systematic way than what has been done before. The second one is to associate with any holomorphic family of automorphisms of rational surfaces a number, called the generic number of parameters, and to give a geometrical interpretation of this number using deformation theory.

\medskip

\noindent Our strategy for the construction of automorphisms of rational surfaces is the following one: we start by choosing any birational map of the complex projective plane $f.$ By the standard factorization theorem for birational maps on surfaces as a composition of blow up and blow down \cite[IV \S 3.4]{Sh}, there exist two canonical sets of (possibly infinitely near) points $\widehat{\xi}_1$ and~$\widehat{\xi}_2$ in~$\mathbb{P}^2(\mathbb{C})$ such that $f$ can be lifted to an isomorphism between $\mathrm{Bl}_{\widehat{\xi}_1}\mathbb{P}^2$ and $\mathrm{Bl}_{\widehat{\xi}_2}\mathbb{P}^2,$ where~$\mathrm{Bl}_{\widehat{\xi}_j}\mathbb{P}^2$  denotes the rational surfaces obtained by blowing up $\mathbb{P}^2(\mathbb{C})$ at the points of  $\widehat{\xi}_j.$ The data of $\widehat{\xi}_1$ and $\widehat{\xi}_2$ allow to get automorphisms of rational surfaces in the left $\mathrm{PGL}(3;\mathbb{C})$-orbit of $f:$ for a fixed positive integer~$k,$ let $\varphi$ be an element of~$\mathrm{PGL}(3;\mathbb{C})$ such that $\widehat{\xi}_1,$ $\varphi\widehat{\xi}_2,$ $(\varphi f)\varphi\widehat{\xi}_2,$ $\ldots,$ $(\varphi f)^{k-1}\varphi\widehat{\xi}_2$ have pairwise disjoint supports in $\mathbb{P}^2(\mathbb{C})$ and that $(\varphi f)^k\varphi\widehat{\xi}_2=\widehat{\xi}_1.$ Then~$\varphi f$ can be lifted to an automorphism of $\mathbb{P}^2(\mathbb{C})$ blown up at $\widehat{\xi}_1,$ $\varphi\widehat{\xi}_2,$ $(\varphi f)\varphi\widehat{\xi}_2,$ $\ldots,$ $(\varphi f)^{k-1}\varphi\widehat{\xi}_2.$ Furthermore, if the conditions above are satisfied for a holomorphic family of $\varphi,$ we get a holomorphic family of rational surfaces whose dimension is at most eight.
Therefore, we see that the problem of lifting an element in the $\mathrm{PGL}(3;\mathbb{C})$-orbit of~$f$ to an automorphism is strongly related to the equation $u(\widehat{\xi}_2)=\widehat{\xi}_1,$ where~$u$ is a germ of biholomorphism of $\mathbb{P}^2(\mathbb{C})$ mapping the support of $\widehat{\xi}_2$ to the support of $\widehat{\xi}_1.$ In concrete examples, when $\widehat{\xi}_1$ and~$\widehat{\xi}_2$ are known, this equation can actually be solved and reduces to polynomial equations in the Taylor expansions of $u$ at the various points of the support of $\widehat{\xi}_2.$ It is worth pointing out that in the generic case, $\widehat{\xi}_1$ and $\widehat{\xi}_2$ consist of the same number~$d$ of distinct points in the projective plane, and the equa\-tion~$u(\widehat{\xi}_2)=\widehat{\xi}_1$ gives $2d$ independent conditions on $u$ (which is the maximum possible number if~$\widehat{\xi}_1$ and $\widehat{\xi}_2$ have length $d$). Conversely, infinitely near points can considerably decrease the number of conditions on $u$ as shown in our examples. This explains why holomorphic families of automorphisms of rational surfaces are more likely to occur when multiple blowups are made.

\medskip

\noindent Let us now describe in more details the examples we obtain. We do not deal with the case of the Cremona involution~$\sigma$ because birational maps of the type $A\sigma,$ with $A$ in $\mathrm{PGL}(3;\mathbb{C}),$ are linear fractional recurrences studied in \cite{BK1, BK2}, and our approach does not give anything new in this case. Our first examples proceed from a family $(\Phi_n)_{n\geq 2}$ of birational maps of $\mathbb{P}^2(\mathbb{C})$ given by $\Phi_n(x:y:z)=(xz^{n-1}+y^n:yz^{n-1}:z^n).$ These birational maps are very special because their exceptional locus is a single line, the line $\{z=0\};$ and their locus of indeterminacy is a single point on this line, the point $P=(1\!:\!0\!:0).$ We compute explicitly $\widehat{\xi}_1$ and $\widehat{\xi}_2$ and obtain sequences of blowups already done in \cite{BK3}. Then we exhibit various families of solutions of the equation $(\varphi \, \Phi_n)^{k-1}\varphi \,\widehat{\xi}_2=\widehat{\xi}_1$ for $k=2,3$ and $\varphi$ in $\mathrm{PGL}( 3;\mathbb{C}),$ and obtain in this way automorphisms of rational surfaces with positive topological entropy. Many of our examples are similar or even sometimes linearly conjugated to those constructed in \cite{BK3}: they have an invariant line and the sequences of blowups are of the same type.
However, for $(n,k)=(3,2)$ we have found an example of a different kind:
\begin{theoreme}\label{onefam}
{\sl If $\alpha$ is a complex number in $\mathbb{C}\setminus\{0,\,1\},$ let $\varphi_\alpha$ be the element of $\mathrm{PGL(3;\mathbb{C})}$ given by
\begin{align*}
&\varphi_\alpha=\left[\begin{array}{ccc}\alpha & 2(1-\alpha) & 2+\alpha-\alpha^2\\
-1 & 0 & \alpha+1 \\
1 & -2 & 1-\alpha
\end{array} \right].
\end{align*}

\noindent The map $\varphi_\alpha\Phi_3$ has no invariant line and is conjugate to an automorphism of $\mathbb{P}^2(\mathbb{C})$ blown up in $15$ points; its first dynamical degree is $\frac{3+\sqrt{5}}{2}.$ Besides, the family $\varphi_\alpha\Phi_3$ is holomorphically trivial.}
\end{theoreme}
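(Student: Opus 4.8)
The plan is to specialize the construction of the introduction to $f=\Phi_3$ with $k=2$, so I would begin by resolving $\Phi_3$. From $\Phi_3(x:y:z)=(xz^2+y^3:yz^2:z^3)$ one reads off that the sole point of indeterminacy is $P=(1:0:0)$, that the line $\{z=0\}$ is contracted to $P$, and that the homaloidal net consists of cubics with a double point at $P$; Noether's identities $\sum m_i=6$ and $\sum m_i^2=8$ then force the multiplicity sequence $(2,1,1,1,1)$, so that $\widehat\xi_1=\mathrm{Ind}(\Phi_3)$ is a cluster of length five above $P$. A direct inversion gives $\Phi_3^{-1}(x:y:z)=(xz^2-y^3:yz^2:z^3)$, whence $\widehat\xi_2=\mathrm{Ind}(\Phi_3^{-1})$ is likewise a length-five cluster above the \emph{same} point $P$. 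I would blow up $P$ together with the four infinitely near points it successively determines, recording explicit local coordinates at each centre, and thereby describe $\widehat\xi_1$ and $\widehat\xi_2$ concretely, recovering the blow-up sequences of \cite{BK3}.

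Since both clusters sit over the same point $P$, it is the element $\varphi$ that separates them and closes up the orbit. Following the introduction, the $15$-point claim reduces to solving, for $\varphi\in\mathrm{PGL}(3;\mathbb{C})$, the system requiring the supports of $\widehat\xi_1$, $\varphi\,\widehat\xi_2$ and $(\varphi\Phi_3)\varphi\,\widehat\xi_2$ to be pairwise disjoint, together with the closing relation $(\varphi\Phi_3)^2\varphi\,\widehat\xi_2=\widehat\xi_1$. Matching these three clusters amounts to finitely many polynomial equations in the entries of $\varphi$ and in the Taylor coefficients of the germ realizing $u(\widehat\xi_2)=\widehat\xi_1$ at each centre. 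I would solve this system and expect its solution set to be exactly the one-parameter family $\varphi_\alpha$, the excluded values $\alpha\in\{0,1\}$ being the degenerations at which the disjointness of the supports breaks down. The factorization theorem quoted in the introduction then shows at once that $g_\alpha:=\varphi_\alpha\Phi_3$ lifts to an automorphism of the surface $X_\alpha$ obtained by blowing up the $3\times 5=15$ points of $\widehat\xi_1\sqcup\varphi_\alpha\widehat\xi_2\sqcup(\varphi_\alpha\Phi_3)\varphi_\alpha\widehat\xi_2$. This infinitely near bookkeeping is, I expect, the main computational obstacle.

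For the dynamical degree I would pass to $\mathrm{Pic}(X_\alpha)\cong\mathbb{Z}^{16}$. In the standard basis $(e_0,e_1,\dots,e_{15})$ the isometry $g_\alpha^*$ sends $e_0$ to $3e_0-2e_{i_1}-e_{i_2}-\dots-e_{i_5}$ (degree three with the multiplicities above) and permutes the fifteen exceptional classes according to the orbit found in the previous step; computing the characteristic polynomial of this integral $16\times 16$ matrix and removing its cyclotomic factors should leave the reciprocal quadratic $t^2-3t+1$, whose dominant root is $\tfrac{3+\sqrt5}{2}=\bigl(\tfrac{1+\sqrt5}{2}\bigr)^2$, giving $\delta_1(g_\alpha)=\tfrac{3+\sqrt5}{2}$ and hence positive entropy. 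The absence of an invariant line is then a short geometric check: a line not through $P$ has an irreducible cubic image and $\{z=0\}$ is contracted to a point, while $\Phi_3$ preserves every remaining line of the pencil through $P$ (it even fixes $\{y=0\}$ pointwise). Consequently any invariant line of $g_\alpha=\varphi_\alpha\Phi_3$ would have to be a $\varphi_\alpha$-invariant line through $P$, that is, a left eigenvector $u$ of $\varphi_\alpha$ with vanishing first coordinate; a direct computation shows that the only possibility forces $u=0$, so no invariant line exists.

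The delicate assertion is the holomorphic triviality of the family. The encouraging fact is that the characteristic polynomial of $\varphi_\alpha$ equals $(\lambda-2)(\lambda^2+\lambda+4)$ independently of $\alpha$ (as are the orbit data and $\delta_1$), so all the linear parts lie in a single $\mathrm{GL}(3;\mathbb{C})$-conjugacy class and the surfaces $X_\alpha$ are mutually combinatorially identical. To promote this to genuine holomorphic triviality I would construct an explicit holomorphic family of birational maps $\psi_\alpha$ conjugating $g_{\alpha_0}$ to $g_\alpha$ and carrying the configuration of centres of $X_{\alpha_0}$ to that of $X_\alpha$, so that each $\psi_\alpha$ lifts to a biholomorphism $X_{\alpha_0}\to X_\alpha$ intertwining the two automorphisms. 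The obstruction is that no \emph{linear} $\psi_\alpha$ can succeed: conjugating $\Phi_3$ by a diagonal matrix only rescales the coefficient of $y^3$, while a linear map commuting with $\Phi_3$ cannot alter the diagonal entries $(\alpha,0,1-\alpha)$ of $\varphi_\alpha$. Thus $\psi_\alpha$ must be a genuine Cremona element manufactured from the shared dynamics, and exhibiting it while controlling its holomorphic dependence on $\alpha$ is, I expect, the real heart of the argument.
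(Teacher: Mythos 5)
Your first three steps follow essentially the paper's own route. The paper resolves $\Phi_3$ into two ordered clusters of length five over $P$ (Proposition \ref{degqcq}), derives the gluing conditions for a germ at $P$ to carry $\widehat\xi_2$ to $\widehat\xi_1$ (Proposition \ref{gluglu}), verifies them for $\varphi_\alpha$ with $k=2$, computes the dynamical degree as the spectral radius of the explicit $16\times 16$ matrix of $(\varphi_\alpha\Phi_3)_*$, whose characteristic polynomial contains the factor $X^2-3X+1$, and excludes invariant lines by noting that $P,$ $\varphi_\alpha(P)$ and $\varphi_\alpha\Phi_3\varphi_\alpha(P)$ are not collinear (Remark \ref{malin}); your left-eigenvector computation is a correct and essentially equivalent substitute for that last point. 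Two caveats: the solution set of your polynomial system is not "exactly" the family $\varphi_\alpha$ (the two-parameter family $\varphi_{\alpha,\beta}$ of Theorem \ref{nqccq} with $n=3$ also satisfies $(\varphi\Phi_3)^2\varphi\,\widehat\xi_2=\widehat\xi_1$), though you only need $\varphi_\alpha$ to be a solution; and with infinitely near base points the lifted automorphism does not literally permute the classes $e_1,\dots,e_{15}$ --- the paper works in a basis of strict transforms, where the matrix still has nontrivial non-permutation blocks --- but this affects only bookkeeping, not the method.

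The genuine gap is your treatment of holomorphic triviality, which goes wrong twice. First, "holomorphically trivial" in this paper means $\mathfrak{m}=0$ for the adjoint action of $\mathrm{PGL}(3;\mathbb{C})$ on $\mathrm{Bir}_3(\mathbb{P}^2)$, i.e. generic members of the family are \emph{linearly} conjugate (see Proposition \ref{critere0}); a conjugation by a genuinely nonlinear Cremona transformation, as you propose to construct, would at best prove birational conjugacy, which is not the assertion to be established. Second, your claimed obstruction to a linear conjugation is false: conjugation preserves the trace (indeed $\mathrm{tr}\,\varphi_\alpha=1$ for all $\alpha$), not the individual diagonal entries, so nothing prevents a linear map commuting with $\Phi_3$ from carrying $\varphi_{\alpha_0}$ to $\varphi_\alpha$. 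In fact the paper's proof is exactly the map you declare impossible: the matrix
\begin{align*}
M_\alpha=\left[\begin{array}{ccc} 1 & 0 & \alpha_0-\alpha\\ 0 & 1 & 0 \\ 0 & 0 & 1\end{array}\right]
\end{align*}
commutes with $\Phi_3$, because $\Phi_3(x+cz:y:z)=(xz^2+y^3+cz^3:yz^2:z^3)$, i.e. $\Phi_3$ intertwines the shear $x\mapsto x+cz$ with itself, and a direct check gives $M_\alpha^{-1}\varphi_{\alpha_0}M_\alpha=\varphi_\alpha$, whence $\varphi_\alpha\Phi_3=M_\alpha^{-1}\varphi_{\alpha_0}\Phi_3 M_\alpha$. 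So the triviality is a one-line linear identity, and the "real heart of the argument" you anticipate (a nonlinear conjugating map depending holomorphically on $\alpha$) is both unnecessary and insufficient for the statement.
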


\medskip


\noindent The meaning of "holomorphically trivial" in the statement of the theorem is that two generic transformations in the family are linearly conjugate, {\it i.e.} they are conjugate via an automorphism of the complex projective plane. The family of Theorem \ref{onefam}, as well as our other examples, are all holomorphically trivial. After many attempts to produce nontrivial holomorphic families, we have been led to conjecture that holomorphic families of the type $\varphi_\alpha\Phi_n$ which yield automorphisms of positive topological entropy are all holomorphically trivial. Nevertheless this phenomenon is not a generality and seems specific to the maps $\Phi_n$: indeed Theorem \ref{thmbk} gives for $n\geq 5$ examples of families of birational maps of the type $\varphi_\alpha f$ conjugate to families of automorphisms of positive entropy on rational surfaces which are not holomorphically trivial (\emph{see} \S \ref{conjecture}).

\medskip

\noindent We finally carry out our method for another birational cubic map,  namely $f(x:y:z)=\big(y^2 z : x(xz+y^2) : y(xz+y^2)\big).$ This map blows down a conic and a line intersecting transversally the conic along the two points of indeterminacy. We produce automorphisms of rational surfaces with positive topological entropy in the left $\mathrm{PGL}(3; \mathbb{C})$-orbit of $f$:
\begin{theoreme}
{\sl Let $\alpha$ be a nonzero complex number and
\begin{align*}
&\varphi_\alpha=\left[\begin{array}{ccc}\dfrac{2\alpha^3}{343}(37\mathrm{i}\sqrt{3}+3)& \alpha& -\dfrac{2\alpha^{2}}{49}(5\mathrm{i}\sqrt{3}+11)\\[2ex]
\dfrac{\alpha^2}{49}(-15+11\mathrm{i}\sqrt{3}) & 1 & -\dfrac{\alpha}{14}(5\mathrm{i}\sqrt{3}+11)\\[2ex]
 -\dfrac{\alpha}{7}(2\mathrm{i}\sqrt{3}+3)& 0 & 0\end{array} \right]
\end{align*}

\smallskip

\noindent The map $\varphi_\alpha f$ is conjugate to an automorphism of $\mathbb{P}^2(\mathbb{C})$ blown up in $15$ points, its first dynamical degree is $\frac{3+\sqrt{5}}{2}.$ Besides, the family $\varphi_\alpha f$ is holomorphically trivial.}
\end{theoreme}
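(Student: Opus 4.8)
The plan is to follow the general recipe described in the introduction: realize $\varphi_\alpha f$ as an automorphism by closing up an orbit of the base loci of $f$ and $f^{-1}$, read off the dynamical degree from the induced isometry of the Picard lattice, and finally prove triviality by exhibiting an explicit linear conjugacy. First I would resolve $f$ and $f^{-1}$. A direct computation shows that the linear system of cubics defining $f$ has exactly two proper base points, $P_1=(0\!:\!0\!:\!1)$ and $P_2=(1\!:\!0\!:\!0)$, which are precisely the two points where the contracted conic $\{xz+y^2=0\}$ and the contracted line $\{y=0\}$ meet. At $P_1$ the tangent cone of the system is the full space of conics, so $P_1$ has multiplicity $2$ and is resolved by a single blowup; over $P_2$, successive blowups reveal a chain of three infinitely near simple base points $P_2\succ P_3\succ P_4\succ P_5$. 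Thus $\widehat{\xi}_1$ has length $5$ with multiplicities $(2,1,1,1,1)$, satisfying the Noether relations $\sum m_i=6$, $\sum m_i^2=8$. The same analysis applied to $f^{-1}$ produces a set $\widehat{\xi}_2$, again of length $5$, supported over the two points $(1\!:\!0\!:\!0)$ and $(0\!:\!1\!:\!0)$ onto which $f$ contracts the conic and the line.

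Next I would solve the orbit equation with $k=2$: I look for $\varphi\in\mathrm{PGL}(3;\mathbb{C})$ such that $\widehat{\xi}_1$, $\varphi\widehat{\xi}_2$, $(\varphi f)\varphi\widehat{\xi}_2$ have pairwise disjoint supports and $(\varphi f)^2\varphi\widehat{\xi}_2=\widehat{\xi}_1$. Blowing up these $3\times 5=15$ points then lifts $\varphi f$ to an automorphism of $X=\mathrm{Bl}_{15}\,\mathbb{P}^2$, by the factorization argument recalled in the introduction. Writing the germ of biholomorphism carrying the support of $(\varphi f)\varphi\widehat{\xi}_2$ to that of $\widehat{\xi}_1$ and expanding it to the order needed to match the infinitely near data, the closing condition becomes a finite polynomial system in the entries of $\varphi$; I would then check that the displayed matrix $\varphi_\alpha$ solves it for every $\alpha\neq 0$ and that the disjointness holds. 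This is the main obstacle: because of the length-$4$ tower over $P_2$, the equations involve higher-order Taylor coefficients of the matching map, and one must verify not merely that the fifteen points coincide set-theoretically but that the entire infinitely near configuration is preserved.

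For the dynamical degree I would compute the isometry $F^\ast=(\varphi_\alpha f)^\ast$ of $\mathrm{Pic}(X)=\mathbb{Z}H\oplus\bigoplus_{i=1}^{15}\mathbb{Z}E_i$ from two pieces of data: the action $H\mapsto 3H-2E_{P_1}-E_{P_2}-E_{P_3}-E_{P_4}-E_{P_5}$ dictated by the multiplicities of $\widehat{\xi}_1$, and the permutation of the exceptional classes dictated by the orbit $\widehat{\xi}_2\to(\varphi f)\varphi\widehat{\xi}_2\to\widehat{\xi}_1$. Computing the characteristic polynomial of the resulting $16\times 16$ matrix, I expect it to factor as a product of cyclotomic polynomials times the Salem factor $t^2-3t+1$, whose dominant root is $\frac{3+\sqrt5}{2}$; equivalently, one checks that the degree sequence $d_n=\deg(\varphi_\alpha f)^n$ obeys $d_{n+1}=3d_n-d_{n-1}$ (giving $1,3,8,21,\dots$), which has the same growth rate. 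Since $\varphi_\alpha f$ has become an automorphism it is algebraically stable, so this spectral radius is exactly the first dynamical degree.

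Finally, triviality is the cleanest part and can be done by hand. Assigning the weights $(2,1,0)$ to $(x,y,z)$, the three components $y^2z$, $x(xz+y^2)$, $y(xz+y^2)$ of $f$ are weighted homogeneous of weights $2,4,3$, which yields, for $t=\alpha'/\alpha$, the intertwining identity
\[
f\circ\mathrm{diag}(t^{-2},t^{-1},1)=\mathrm{diag}(t^{-2},t^{-4},t^{-3})\circ f .
\]
Setting $\psi=\mathrm{diag}(t^{2},t,1)$, a short check shows that the exponents $(2,1,0)$ of $\psi$ together with this identity match exactly the $\alpha$-grading of the entries of $\varphi_\alpha$ (the powers $\alpha^{3},\alpha,\alpha^{2}$ in the first row, $\alpha^{2},1,\alpha$ in the second, $\alpha$ in the third), so that $\psi\,\varphi_\alpha f\,\psi^{-1}=\varphi_{\alpha'}f$ in $\mathrm{PGL}(3;\mathbb{C})$. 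Hence any two members of the family are linearly conjugate, and since $\psi$ carries the fifteen points of one configuration to those of the other it induces a biholomorphism $X_\alpha\to X_{\alpha'}$ conjugating the automorphisms; the family is therefore holomorphically trivial.
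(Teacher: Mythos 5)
Your plan follows the paper's own route, and a good part of it is sound: your desingularization data (a multiplicity-$2$ base point at $(0\!:\!0\!:\!1)$ plus a chain of four simple base points over $(1\!:\!0\!:\!0)$, and the mirror picture for $f^{-1}$ over $(0\!:\!1\!:\!0)$ and $(1\!:\!0\!:\!0)$) agrees with Proposition \ref{coniquecomp}; your normalization $k=2$, giving the $15=3\times 5$ centers and the closing condition $(\varphi f)^2\varphi\,\widehat{\xi}_2=\widehat{\xi}_1$, is the correct one --- it matches the count of fifteen points in the statement, the $16\times 16$ matrix in the paper's proof, and the list of fifteen points in the example following Theorem \ref{critere}, the ``$k=3$'' announced just before the theorem being a shift of convention. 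Your triviality argument is correct and is in substance exactly the paper's: in $\mathrm{PGL}(3;\mathbb{C})$ your $\psi=\mathrm{diag}(t^2,t,1)$ equals the inverse of the paper's $M_\alpha=\mathrm{diag}(1,\alpha/\alpha_0,\alpha^2/\alpha_0^2)$, and the weighted-homogeneity identity does verify the conjugation $\psi\,\varphi_\alpha f\,\psi^{-1}=\varphi_{\alpha'}f$.

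There are, however, two gaps. The concrete one is your recipe for the action on $\mathrm{Pic}(X)$: it is false that the fifteen exceptional classes are permuted. By Proposition \ref{coniquecomp}, the lift of $\varphi f$ sends the exceptional curve $\mathrm{F}$ over $P$ onto the strict transform of the conic $\varphi(\mathcal{C}')$, $\mathcal{C}'=\{z^2-xy=0\}$, and the top curve $\mathrm{N}$ of the tower onto the strict transform of the line $\varphi(\Delta'')$, while the classes $\varphi\mathrm{E}$ and $\varphi\Omega$ are the images of the contracted conic $\mathcal{C}$ and line $\Delta'$, not of exceptional curves. If one literally combines $H\mapsto 3H-2E_{P_1}-E_{P_2}-\cdots-E_{P_5}$ with a permutation of the $E_i$, the resulting matrix is block-triangular with characteristic polynomial $(X-3)$ times a product of cyclotomic factors, so its spectral radius is $3$; moreover such a map cannot preserve the intersection form, since $\langle F^*H,F^*E_i\rangle$ would be nonzero for some $i$ while $\langle H,E_i\rangle=0$. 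Producing the Salem factor $X^2-3X+1$ requires the additional incidence data --- the multiplicities of these four curves at the fifteen centers --- which is precisely what the explicit matrix in the paper's proof encodes. The second gap is that the heart of the theorem, namely that the displayed $\varphi_\alpha$ satisfies the gluing conditions (the analogue of Proposition \ref{recolconfig9}: four polynomial conditions on the Taylor coefficients at $Q$ of the matching germ, for each of the two closing steps) together with the disjointness of the three supports for every $\alpha\neq0$, is acknowledged as ``the main obstacle'' but never carried out; as written, your text proves the triviality assertion and only outlines the other two.
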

\noindent This example seems completely new, the configuration of exceptional curves shows that it is not linearly conjugate to any of the already known examples (although we do not know if it is the case when linear conjugacy is replaced by birational conjugacy).

\par \bigskip

\noindent We now turn to the second object of the paper, which is the count of parameters in a family of automorphism of rational surfaces, or even more generally to a family of birational maps of $\mathbb{P}^2(\mathbb{C}),$ and its geometric interpretation. The naive way to count the number of parameters in a family of birational maps would be to count the number of parameters appearing in the homogeneous polynomials of the family. This is not a very good idea: indeed, if $f$ is any birational map, the adjoint orbit of $f$ under the action of $\mathrm{PGL}(3; \mathbb{C})$ is a family of birational maps which are all linearly conjugate. In other words, the parameters appearing in this family are fake parameters from the point of view of complex dynamics. This is the reason why the authors study in Theorem \ref{thmbk} how the complex structures of the rational surfaces $X_a$ vary with $a.$ Their approach links implicitly two different objects: a family of rational surface automorphisms, considered simply as a family of birational maps; and a family of rational surfaces.

\par \medskip

\noindent In the paper, we define a notion of generic number of parameters for each of these two geometric objects. First we associate a number with any holomorphic family of birational maps of the projective plane, which takes account of the possible fake parameters; we call it the generic number of parameters of the family. By definition, holomorphically trivial families will be families with vanishing generic number of parameters and generically effective families will be families with maximal generic number of parameters. The principal advantage of this number is that it is quite easy to compute in concrete examples. Then we propose an approach in order to define and count the generic number of parameters in a given family of rational surfaces. The good setting for this study is the theory of deformations of complex compact manifolds of Kodaira and Spencer \cite{Kodaira}: a deformation is a triplet $(\mathfrak{X},\pi,B)$ such that $\mathfrak{X}$ and $B$ are complex manifolds and $\pi\colon\mathfrak{X}\to B$ is a proper holomorphic submersion. If $X$ is a complex compact manifold, a deformation of $X$ is a deformation $(\mathfrak{X},\pi,B)$ such that for a specific $b$ in $B,$ $\mathfrak{X}_b$ is biholomorphic to $X.$ If $(\mathfrak{X},\pi,B)$ is a deformation and $X$ is a fiber of $\mathfrak{X},$ Ehresmann's fibration theorem implies that $\mathfrak{X}$ is diffeomorphic to~$X\times B$ over $B.$ Therefore, a deformation can also be seen as a family of integrable complex structures $(J_b)_{b\in B}$ on a fixed differentiable manifold $X,$ varying holomorphically with $b.$ The main tool of deformation theory is the Kodaira-Spencer map: if~$(\mathfrak{X},\pi,B)$ is a deformation and $b_{_{0}}$ is a point of $B,$ the Kodaira-Spencer map of $\mathfrak{X}$ at $b_{_{0}}$ is a linear map $\mathrm{KS}_{b_{_0}}(\mathfrak{X}) \colon \mathrm{T}_{b_{_{0}}}B\to\mathrm{H}^1(\mathfrak{X}_{b_{_{0}}},\mathrm{T}\mathfrak{X}_{b_{_{0}}}),$ which is intuitively the differential of the map $b\mapsto J_{b}$ at $b_{_{0}}.$ If $(\mathfrak{X},\pi,B)$ is a deformation whose fibers are projective varieties, we prove that the kernels of $\mathrm{KS}_b(\mathfrak{X})$ have generically the same dimension and define a holomorphic subbundle~$E_\mathfrak{X}$ of $\mathrm{T}B.$ This leads to a natural definition of the generic number $\mathfrak{m}(\mathfrak{X})$ of parameters of a deformation $(\mathfrak{X},\pi,B)$ as $\mathfrak{m}(\mathfrak{X})=\dim B-\mathrm{rank}\,E_\mathfrak{X}.$ In the case $\mathfrak{m}(\mathfrak{X})=\dim B,$ we say that $\mathfrak{X}$ is generically effective; in other words, for $b$ generic in $B,$ $\mathrm{KS}_b (\mathfrak{X})$ is injective. 
This is slightly weaker than requiring that different generic fibers of $\mathfrak{X}$ are not biholomorphic (as in Theorem \ref{thmbk}), but much easier to verify in concrete examples.

\medskip

\noindent From a theoretical point of view, deformations of basic rational surfaces are easy to understand. If we fix a positive integer $N,$ the moduli space of sets of ordered (possibly infinitely near) points in the projective plane $\mathbb{P}^2(\mathbb{C})$ of cardinal~$N$ is a smooth projective variety $S_N$ of dimension $2N$ obtained by blowing up successive incidence loci. Besides, there exists a natural deformation $\mathfrak{X}_N$ over $S_N$ whose fibers are rational surfaces: if $\widehat{\xi}$ is in $S_N,$ then $(\mathfrak{X}_N)_{\widehat{\xi}}$ is equal to $\mathrm{Bl}_{\widehat{\xi}} \, \mathbb{P}^2.$ This deformation is complete at any point of $S_N,$ {\it i.e.} every deformation of a fiber of $\mathfrak{X}_N$ is locally induced by $\mathfrak{X}_N$ up to holomorphic base change. Therefore, if $(\mathfrak{X}, \pi, B)$ is a deformation of a basic rational surface, all the fibers in a small neighborhood of the central fiber remain rational and basic (this is no longer the case for nonbasic rational surfaces). There is a natural $\mathrm{PGL}(3;\mathbb{C})$-action on $S_N$ which can be lifted on $\mathfrak{X}_N,$ and this action can be used to describe the Kodaira-Spencer map of the deformation $\mathfrak{X}_N:$ for any $\widehat{\xi}$ in $S_N,$ $\mathrm{KS}_{\widehat{\xi}}\, (\mathfrak{X}_N)$ is surjective and its kernel is the tangent space at $\widehat{\xi}$ of the $\mathrm{PGL}(3;\mathbb{C})$-orbit of $\widehat{\xi}$ in $S_N.$ If $N \geq 4, $ let $S_N^{\dag}$ be the Zariski-dense open set of points $\widehat{\xi}$ in $S_N$ such that~$\mathrm{Bl}_{\widehat{\xi}} \, \mathbb{P}^2$ has no nonzero holomorphic vector field. Since $\mathfrak{X}_N$ is complete, families of rational surfaces with no nonzero holomorphic vector fields can locally be described as the pullback of $\mathfrak{X}_N$ by a holomorphic map from the parameter space to $S_N^{\dag}.$ We provide a practical way to count the generic number of parameters in such families:

\begin{theoreme}\label{para}
{\sl Let $U$ be an open set in $\mathbb{C}^n,$ $N$ be an integer greater than or equal to $4$ and $\psi\colon U \to S_N^{\dag}$ be a holomorphic map. Then~$\mathfrak{m}(\psi^* \mathfrak{X}_N)$ is the smallest integer $k$ such that for all generic $\alpha$ in $U,$ there exist a neighborhood $\Omega$ of~$0$ in~$\mathbb{C}^{n-k}$ and two holomorphic maps $\gamma\,  \colon \Omega \rightarrow U$ and $M \colon \Omega \rightarrow \mathrm{PGL(3; \mathbb{C})}$ such that:
\begin{itemize}
\item $\gamma_{*}(\, 0)$ is injective,
\item $\gamma\, (\, 0)=\alpha$ and $M(0\!)=\mathrm{Id},$
\item for all $t$ in $\Omega,$ $\psi \, (\gamma (\, t \!))=M(\, t)\,  \psi(\alpha).$
\end{itemize}}
\end{theoreme}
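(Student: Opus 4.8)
The plan is to work entirely at the level of the Kodaira-Spencer map of the pulled-back deformation $\psi^* \mathfrak{X}_N$, using the explicit description of $\mathrm{KS}_{\widehat\xi}(\mathfrak{X}_N)$ given in the introduction. Recall that for any $\widehat\xi$ in $S_N$, the map $\mathrm{KS}_{\widehat\xi}(\mathfrak{X}_N)$ is surjective with kernel equal to the tangent space at $\widehat\xi$ of the $\mathrm{PGL}(3;\mathbb{C})$-orbit of $\widehat\xi$. By functoriality of the Kodaira-Spencer map under base change, for $\alpha$ in $U$ the composite $\mathrm{KS}_\alpha(\psi^*\mathfrak{X}_N)$ equals $\mathrm{KS}_{\psi(\alpha)}(\mathfrak{X}_N) \circ \psi_{*}(\alpha)$. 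Since $\psi$ takes values in $S_N^{\dag}$, the relevant fibers carry no nonzero holomorphic vector field, so the $\mathrm{PGL}(3;\mathbb{C})$-action on $S_N$ is locally free near $\psi(\alpha)$ and the orbit map is an immersion of $\mathrm{PGL}(3;\mathbb{C})$; thus the kernel of $\mathrm{KS}_{\psi(\alpha)}(\mathfrak{X}_N)$ is the $8$-dimensional tangent space to the orbit. Consequently, the kernel of $\mathrm{KS}_\alpha(\psi^*\mathfrak{X}_N)$ is exactly the preimage under $\psi_{*}(\alpha)$ of the tangent space to the $\mathrm{PGL}(3;\mathbb{C})$-orbit of $\psi(\alpha)$. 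The number $\mathfrak{m}(\psi^*\mathfrak{X}_N)$ is, by the definition recalled above, $n$ minus the generic dimension of this kernel; so $\mathfrak{m}(\psi^*\mathfrak{X}_N) = \mathrm{rank}\, \psi_{*}(\alpha) - \dim\!\big(\mathrm{Im}\,\psi_{*}(\alpha) \cap \mathrm{T}_{\psi(\alpha)}(\mathrm{PGL}(3;\mathbb{C})\cdot \psi(\alpha))\big)$ for generic $\alpha$, which one checks equals the generic rank of the composition of $\psi_*$ with the quotient projection $\mathrm{T}_{\psi(\alpha)} S_N \to \mathrm{T}_{\psi(\alpha)} S_N \big/ \mathrm{T}_{\psi(\alpha)}(\mathrm{PGL}(3;\mathbb{C})\cdot\psi(\alpha))$.

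**Reformulating via the orbit foliation.**

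The key geometric idea is that $\mathfrak{m}(\psi^*\mathfrak{X}_N)$ measures the number of directions in which $\psi(\alpha)$ moves transversally to the $\mathrm{PGL}(3;\mathbb{C})$-orbits in $S_N$. I would phrase this as follows: let $k = \mathfrak{m}(\psi^*\mathfrak{X}_N)$. By the computation above, the kernel $K_\alpha$ of $\mathrm{KS}_\alpha(\psi^*\mathfrak{X}_N)$ has dimension $n - k$ for generic $\alpha$, and by hypothesis these kernels assemble into the holomorphic subbundle $E_{\psi^*\mathfrak{X}_N}$ of $\mathrm{T}U$ of rank $n-k$. A tangent vector $v \in \mathrm{T}_\alpha U$ lies in $K_\alpha$ precisely when $\psi_*(\alpha)(v)$ is tangent to the orbit through $\psi(\alpha)$, i.e. when moving in the direction $v$ changes $\psi$ only by the infinitesimal action of $\mathrm{PGL}(3;\mathbb{C})$. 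This is exactly the infinitesimal version of the third bullet in the statement: $\psi(\gamma(t)) = M(t)\,\psi(\alpha)$.

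**Passing from infinitesimal to local: the two inclusions.**

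I would prove the equality by establishing both inequalities. For the inequality $\mathfrak{m}(\psi^*\mathfrak{X}_N) \le k_{\mathrm{geom}}$, where $k_{\mathrm{geom}}$ denotes the smallest integer for which maps $\gamma, M$ as in the statement exist: given such $\gamma$ and $M$ on $\Omega \subset \mathbb{C}^{n-k_{\mathrm{geom}}}$ with $\gamma$ an immersion at $0$, differentiating the relation $\psi(\gamma(t)) = M(t)\psi(\alpha)$ at $t=0$ shows that the $(n-k_{\mathrm{geom}})$-dimensional image of $\gamma_*(0)$ lies in the kernel of the composed map $\mathrm{KS}_\alpha(\psi^*\mathfrak{X}_N)$; hence $\dim K_\alpha \ge n - k_{\mathrm{geom}}$, giving $\mathfrak{m}(\psi^*\mathfrak{X}_N) \le k_{\mathrm{geom}}$. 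The reverse inequality is the substantive direction and is where the main obstacle lies: I must integrate the infinitesimal kernel distribution $E_{\psi^*\mathfrak{X}_N}$ into an actual holomorphic family $\gamma$ together with a compensating family $M$ of projective transformations. The natural tool is the holomorphic Frobenius theorem applied to an enlarged foliation on $U \times \mathrm{PGL}(3;\mathbb{C})$ defined by the condition that $\psi(\gamma) = M \cdot \psi(\alpha)$; one shows the distribution $\{(v, \xi) : \psi_*(v) = \xi_{\sharp}(\psi(\alpha))\}$, where $\xi_\sharp$ denotes the fundamental vector field of $\xi \in \mathfrak{sl}_3$, is involutive because it is the pullback of the orbit foliation on $S_N$, whose leaves are the $\mathrm{PGL}(3;\mathbb{C})$-orbits. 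The delicate points are: verifying that the distribution has locally constant rank precisely on the generic locus where $\dim K_\alpha$ is minimal (this is where genericity of $\alpha$ enters and why the statement quantifies over generic $\alpha$); checking involutivity, which follows from the fact that the tangent distribution to orbits is invariant under the group action and hence integrable with the orbits as leaves; and extracting from a leaf of the integrated foliation a graph over a $(n-k)$-dimensional disc $\Omega$ giving the immersion $\gamma$ and the map $M$ with $M(0)=\mathrm{Id}$. I expect the normalization $M(0) = \mathrm{Id}$ and the injectivity of $\gamma_*(0)$ to require a careful choice of a local section of the orbit map, exploiting that the stabilizer of $\psi(\alpha)$ in $\mathrm{PGL}(3;\mathbb{C})$ is finite on $S_N^{\dag}$; this finiteness is what guarantees that the leaf of the foliation projects to a genuine $(n-k)$-parameter family in $U$ rather than collapsing, and it is precisely the role played by the hypothesis that $\psi$ lands in $S_N^{\dag}$.
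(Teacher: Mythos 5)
Your proposal is correct and takes essentially the same approach as the paper: both rest on identifying $\ker \mathrm{KS}_\alpha(\psi^*\mathfrak{X}_N)$ with $\psi_*(\alpha)^{-1}\bigl(\mathrm{T}_{\psi(\alpha)}O_{\psi(\alpha)}\bigr)$ via Theorem \ref{easy} and functoriality of the Kodaira--Spencer map, proving the easy inequality by differentiating $\psi(\gamma(t))=M(t)\,\psi(\alpha),$ and the hard one by integrating the kernel distribution using the regular $\mathrm{PGL}(3;\mathbb{C})$-orbit foliation on $S_N^{\dag}$ together with a local holomorphic section of the orbit map. The only divergence is technical: you run Frobenius on the graph space $U\times\mathrm{PGL}(3;\mathbb{C}),$ whereas the paper first factors $\psi$ locally by the constant-rank theorem as a projection $V_\alpha\times Z_\alpha\to Z_\alpha$ and integrates the trace of the orbit foliation on $Z_\alpha,$ which settles at once the constant-rank, involutivity and non-collapsing issues you flag (the genericity of $\alpha$ playing the same role in both arguments).
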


\noindent Remark that we deal only with rational surfaces without nontrivial holomorphic vector field. This hypothesis is not very restrictive in our context because these surfaces are the only ones which carry interesting automorphisms, \textit{i.e.} automorphisms of infinite order when acting on the Picard group of the surface.
\par \medskip

\noindent For every positive integer $d,$ let us introduce the set $\mathrm{Bir}_d(\mathbb{P}^2)$ of birational maps of the complex projective plane given by a triplet of homogeneous polynomials of degree $d$ without common factors. As an application of Theorem \ref{para}, we can compare the two notions of generic number of parameters we have introduced:

\begin{theoreme}\label{ineq}
{\sl Let $N$ and $d$ be positive integers such that $N$ is greater than or equal to $4,$ $Y$ be a smooth connected analytic subset of $\mathrm{Bir}_d(\mathbb{P}^2)$ and $\psi \colon Y \rightarrow S_N^{\dag}$ be a holomorphic map. If $\mathfrak{X}=\psi^{*} \mathfrak{X}_N,$ let $\Gamma \colon \mathfrak{X} \rightarrow Y \times \mathbb{P}^2(\mathbb{C})$ be the natural holomorphic map over $Y$ whose restriction on each fiber $\mathfrak{X}_y$ is  the natural projection from $\mathrm{Bl}_{\psi(y)} \mathbb{P}^2$ to~$\mathbb{P}^2(\mathbb{C}).$ Assume that for any $y$ in $Y,$ if $f_y$ is the birational map parameterized by $y,$ ${\Gamma_y}^{-1} \circ f_y \circ \Gamma_y$ is an automorphism of the rational surface $\mathfrak{X}_y.$
Then the generic number of parameters of the holomorphic family $Y$ is smaller than the generic number of parameters of the deformation $\mathfrak{X},$ {\it i.e.} $\mathfrak{m}(Y)\leq \mathfrak{m}(\mathfrak{X}).$}
\end{theoreme}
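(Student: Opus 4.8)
Here is how I would attack the statement.

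\medskip

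\noindent\textbf{Strategy.} The plan is to read off the inequality from the explicit description of $\mathfrak{m}(\mathfrak{X})$ supplied by Theorem~\ref{para}, by proving that a family of configurations which stays inside a single $\mathrm{PGL}(3;\mathbb{C})$-orbit must already come from a family of \emph{linearly conjugate} birational maps. Concretely, I would fix a generic point $y=\alpha$ in $Y$, set $n=\dim Y$, and work in a local chart so that $\psi$ is a holomorphic map from an open set $U\subset\mathbb{C}^n$ to $S_N^{\dag}$. Applying Theorem~\ref{para} with $k=\mathfrak{m}(\mathfrak{X})$ produces a neighbourhood $\Omega$ of $0$ in $\mathbb{C}^{\,n-\mathfrak{m}(\mathfrak{X})}$ and holomorphic maps $\gamma\colon\Omega\to U$ and $M\colon\Omega\to\mathrm{PGL}(3;\mathbb{C})$ with $\gamma_*(0)$ injective, $\gamma(0)=\alpha$, $M(0)=\mathrm{Id}$ and
\[
\psi(\gamma(t))=M(t)\,\psi(\alpha)\qquad\text{for all }t\in\Omega.
\]
I claim the \emph{same} pair $(\gamma,M)$ trivialises the birational family, i.e.\ that $f_{\gamma(t)}=M(t)\,f_\alpha\,M(t)^{-1}$. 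Granting this, the $(n-\mathfrak{m}(\mathfrak{X}))$-dimensional, injectively parametrised family $t\mapsto f_{\gamma(t)}$ lies in the single linear-conjugacy class of $f_\alpha$, so it exhibits $n-\mathfrak{m}(\mathfrak{X})$ fake directions; by the definition of the generic number of parameters of a family of birational maps (mirroring the ``smallest integer $k$'' formulation of Theorem~\ref{para}) this forces $\mathfrak{m}(Y)\le\mathfrak{m}(\mathfrak{X})$.

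\medskip

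\noindent\textbf{The rigidity lemma.} The heart of the argument is thus: if $\psi(\gamma(t))=M(t)\,\psi(\alpha)$, then $f_{\gamma(t)}=M(t)\,f_\alpha\,M(t)^{-1}$. I would write $X_t=\mathrm{Bl}_{\psi(\gamma(t))}\mathbb{P}^2$ and let $F_{\gamma(t)}$ be the lift of $f_{\gamma(t)}$ to an automorphism of $X_t$, which exists by hypothesis. Since $M(t)$ carries the configuration $\psi(\alpha)$ onto $\psi(\gamma(t))$, it lifts to an isomorphism $\widetilde{M}(t)\colon X_0\to X_t$, and $G_t:=\widetilde{M}(t)\,F_\alpha\,\widetilde{M}(t)^{-1}$ is an automorphism of $X_t$ whose underlying birational map is precisely $M(t)\,f_\alpha\,M(t)^{-1}$. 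Pulling back to the fixed surface $X_0$, it then suffices to show that the holomorphic family
\[
A_t:=\widetilde{M}(t)^{-1}\,F_{\gamma(t)}\,\widetilde{M}(t)\in\mathrm{Aut}(X_0)
\]
is constant equal to $A_0=F_\alpha$.

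\medskip

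\noindent\textbf{Where the hypotheses enter (the expected obstacle).} This last point is the delicate one, and it is where the condition $\psi(Y)\subset S_N^{\dag}$ is indispensable. The action of $A_t$ on $\mathrm{Pic}(X_0)\cong\mathbb{Z}^{N+1}$ is by integral matrices depending continuously on $t$, hence is locally constant; since $A_0=F_\alpha$, I get $A_t^{*}=F_\alpha^{*}$ for all small $t$. Therefore $A_t\,F_\alpha^{-1}$ lies in the kernel of $\mathrm{Aut}(X_0)\to\mathrm{GL}(\mathrm{Pic}(X_0))$. Because $\psi(\alpha)\in S_N^{\dag}$, the surface $X_0$ carries no nonzero holomorphic vector field, so by Harbourne's theorem \cite{Harbourne} this kernel is \emph{finite}. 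The map $t\mapsto A_t\,F_\alpha^{-1}$ is then a continuous map from the connected set $\Omega$ into a finite set with value $\mathrm{Id}$ at $t=0$, hence is identically $\mathrm{Id}$. Thus $A_t=F_\alpha$, giving $F_{\gamma(t)}=\widetilde{M}(t)\,F_\alpha\,\widetilde{M}(t)^{-1}=G_t$; reading this identity on $\mathbb{P}^2(\mathbb{C})$ yields $f_{\gamma(t)}=M(t)\,f_\alpha\,M(t)^{-1}$. I expect the only real subtlety here to be checking holomorphy of $t\mapsto A_t$ (lifting $f_{\gamma(t)}$ and conjugating by $\widetilde{M}(t)$ should depend holomorphically on $t$) and being careful that the no-vector-field hypothesis is exactly what makes the finiteness input available.

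\medskip

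\noindent\textbf{Assembling.} Finally I would combine the pieces: the rigidity lemma converts the optimal trivialisation of $\mathfrak{X}$ from Theorem~\ref{para} into a trivialisation of the same dimension $n-\mathfrak{m}(\mathfrak{X})$ of the birational family $Y$, whence $\mathfrak{m}(Y)\le\mathfrak{m}(\mathfrak{X})$ at the generic $\alpha$, and so on all of the connected family $Y$. One should note that the reverse containment of trivialising directions is automatic, since linearly conjugate maps have configurations in the same $\mathrm{PGL}(3;\mathbb{C})$-orbit; this suggests that equality in fact holds, but only the inequality is needed for the applications.
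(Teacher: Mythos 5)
Your proof is correct and its skeleton is exactly the paper's: take a generic $\alpha$, apply Theorem \ref{para} (Theorem \ref{critere} in the body of the paper) to get $(\gamma, M)$ with $\psi(\gamma(t))=M(t)\,\psi(\alpha)$, lift $M(t)$ to isomorphisms $\widetilde{M}(t)\colon X_0\to X_t$, transport the automorphisms $F_{\gamma(t)}$ back to the fixed surface $X_0$, show that the resulting holomorphic family $A_t$ is constant, deduce $f_{\gamma(t)}=M(t)\,f_\alpha\,M(t)^{-1}$, and conclude via the ``smallest integer $k$'' characterization of $\mathfrak{m}(Y)$ (Proposition \ref{critere0}). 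The one place you diverge is the rigidity step: the paper simply notes that, $X_0$ having no nonzero holomorphic vector field, $\mathrm{Aut}(X_0)$ is discrete, so the connected holomorphic family $(A_t)_{t\in\Omega}$ must be constant; you instead first freeze the action on $\mathrm{Pic}(X_0)$ by integrality and then use finiteness of the kernel of $\mathrm{Aut}(X_0)\to\mathrm{GL}(\mathrm{Pic}(X_0))$. That variant works, but your appeal to Harbourne for the finiteness is misplaced: Harbourne's result, as invoked in the paper, is ``infinite image $\Rightarrow$ finite kernel,'' and nothing in the present statement forces $F_\alpha^{*}$ to have infinite order (there is no entropy hypothesis here). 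The clean justification is either the Lieberman--Fujiki theorem (for a projective surface with no vector fields, the subgroup of $\mathrm{Aut}$ acting trivially on cohomology is finite), or, most directly, that this kernel is canonically the lift of the stabilizer $G_{\psi(\alpha)}$, which is \emph{trivial} by the very definition of $S_N^{\dag}$. Finally, your closing aside --- that the reverse containment of trivializing directions is ``automatic,'' suggesting equality --- is wrong: $\psi$ is an arbitrary map into $S_N^{\dag}$, not the minimal desingularization of $f_y$, so a linear conjugacy between $f_\alpha$ and $f_{\gamma(t)}$ need not carry $\psi(\alpha)$ onto $\psi(\gamma(t))$. The paper gives an explicit counterexample immediately after the theorem (a curve inside the adjoint orbit of a linear involution, with $\mathfrak{m}(Y)=0$ but $\mathfrak{m}(\mathfrak{X})=1$); equality is recovered only under the minimality hypothesis of Theorem \ref{genius}, via Lemma \ref{minimal}. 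Neither remark affects the validity of your proof of the inequality itself.
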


\noindent As a corollary, if a family of automorphisms of rational surfaces without holomorphic vector field is generically effective (as a family of birational maps), then the associated family of rational surfaces is generically effectively parameterized.

\par \medskip

\noindent Under the assumptions of the previous theorem, the equality between $\mathfrak{m}(Y)$ and $\mathfrak{m}(\mathfrak{X})$ is not valid in general. However, for families of automorphisms of rational surfaces considered in the article, we prove that the two notions of generic number of parameters agree:

\begin{theoreme}
{\sl Let $k$ and $N$ be two positive integers, $f$ be a birational map of the complex projective plane, $\widehat{\xi}_1$ and $\widehat{\xi}_2$ be two points of $S_N$ corresponding to the minimal desingularization of $f$ and $U$ be a smooth connected analytic subset of $\mathrm{PGL}(3; \mathbb{C}).$ We make the following assumptions:
\begin{itemize}
\item [(i)] For all $\varphi$ in $U,$ $(\varphi f)^k \varphi\,  \widehat{\xi}_2=\widehat{\xi}_1.$
\item [(ii)] The supports of $\widehat{\xi}_1,$ $\varphi \widehat{\xi}_2$ and $(\varphi f)^j \varphi\,  \widehat{\xi}_2, \, \, 1 \leq j \leq k-1,$ are pairwise disjoint.
\item [(iii)] If $\psi \colon U \rightarrow S_{kN}$ is defined by $\psi(\varphi)=( \widehat{\xi}_1, \varphi \, \widehat{\xi}_2, \varphi f  \varphi\,  \widehat{\xi}_2, \ldots, (\varphi f)^{k-1} \varphi\,  \widehat{\xi}_2),$ then the image of $\psi$ is included in $S_{kN}^{\dag}.$
\item[(iv)] For all $\varphi$ in $U,$ the birational map $\varphi f$ can be lifted to an automorphism of the rational surface $\mathrm{Bl}_{\psi(\varphi)}\mathbb{P}^2.$
\end{itemize}
If $\widetilde{U}$ denotes the family of birational maps $(\varphi f)_{\varphi \in U}$ and if $\mathfrak{X}=\psi^*\mathfrak{X}_{kN},$ then $\mathfrak{m}(\widetilde{U})=\mathfrak{m}(\mathfrak{X}).$}
\end{theoreme}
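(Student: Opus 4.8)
The plan is to prove the two inequalities $\mathfrak{m}(\widetilde{U})\leq \mathfrak{m}(\mathfrak{X})$ and $\mathfrak{m}(\widetilde{U})\geq \mathfrak{m}(\mathfrak{X})$ separately, using Theorem \ref{ineq} for the first one and an equivariance argument together with Theorem \ref{para} for the second. For the first inequality, observe that since $f$ is birational the map $\Phi\colon \varphi\mapsto \varphi f$ is a biholomorphism from $U$ onto the analytic subset $\widetilde{U}$ of $\mathrm{Bir}_d(\mathbb{P}^2)$, where $d=\deg f$; hence $\widetilde{U}$ is smooth and connected and its generic number of parameters is computed on $U$ through $\Phi$. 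Assumptions (iii) and (iv) are exactly the hypotheses of Theorem \ref{ineq} applied, with $N$ replaced by $kN$, to the holomorphic map $\psi\colon U\to S_{kN}^{\dag}$ and the family $\widetilde{U}$; that theorem then yields $\mathfrak{m}(\widetilde{U})\leq \mathfrak{m}(\mathfrak{X})$.

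For the reverse inequality, I would realize $\psi$ as the composition of $\Phi$ with a $\mathrm{PGL}(3;\mathbb{C})$-equivariant map. For a birational map $g$, let $P(g)$ and $Q(g)$ in $S_N$ be the source and target data of its minimal desingularization, and set $\Psi(g)=\big(P(g),Q(g),g\,Q(g),\ldots,g^{k-1}Q(g)\big)$. Since $\varphi$ is an automorphism of $\mathbb{P}^2(\mathbb{C})$ one has $P(\varphi f)=\widehat{\xi}_1$ and $Q(\varphi f)=\varphi\,\widehat{\xi}_2$, so that $\psi=\Psi\circ\Phi$. The crucial point is that $\Psi$ intertwines linear conjugation on $\mathrm{Bir}_d(\mathbb{P}^2)$ with the diagonal action on $S_{kN}$: from the equivariance of the minimal desingularization, $P(AgA^{-1})=A\,P(g)$ and $Q(AgA^{-1})=A\,Q(g)$, together with $(AgA^{-1})^{j}(A\,\eta)=A\,g^{j}(\eta)$, one gets $\Psi(AgA^{-1})=A\,\Psi(g)$.

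Now recall that $\mathfrak{m}(\mathfrak{X})=\dim U-\dim G_\alpha$ for generic $\alpha$, where $G_\alpha=(\mathrm{d}\psi_\alpha)^{-1}\big(T_{\psi(\alpha)}(\mathrm{PGL}(3;\mathbb{C})\cdot\psi(\alpha))\big)$ is the kernel of the Kodaira--Spencer map of $\mathfrak{X}$ at $\alpha$; indeed, by functoriality under base change this kernel is the preimage under $\mathrm{d}\psi_\alpha$ of $\ker \mathrm{KS}_{\psi(\alpha)}(\mathfrak{X}_{kN})$, which is the tangent space to the $\mathrm{PGL}(3;\mathbb{C})$-orbit of $\psi(\alpha)$. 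Likewise, by definition of the generic number of parameters of a family of birational maps, $\mathfrak{m}(\widetilde{U})=\dim U-\dim F_\alpha$, where $F_\alpha=(\mathrm{d}\Phi_\alpha)^{-1}\big(T_{\alpha f}(\mathrm{PGL}(3;\mathbb{C})\cdot(\alpha f))\big)$ records the infinitesimal directions in $U$ along which $\alpha f$ moves only by linear conjugation. Differentiating $\psi=\Psi\circ\Phi$ at $\alpha$ and using the equivariance of $\Psi$, the image under $\mathrm{d}\Psi_{\alpha f}$ of the tangent space to the conjugation orbit of $\alpha f$ lands inside the tangent space to the $\mathrm{PGL}(3;\mathbb{C})$-orbit of $\psi(\alpha)$; hence $F_\alpha\subseteq G_\alpha$, so that $\dim F_\alpha\leq \dim G_\alpha$ and therefore $\mathfrak{m}(\widetilde{U})\geq \mathfrak{m}(\mathfrak{X})$. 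Combining the two inequalities gives the equality.

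The main obstacle will be the rigorous construction of $\Psi$: one must verify that $P(g)$ and $Q(g)$, which involve infinitely near points, depend holomorphically on $g$ in a neighbourhood of $\alpha f$ (local constancy of the combinatorial type of the desingularization, which holds on the relevant stratum by connectedness of $U$) and transform equivariantly under automorphisms of $\mathbb{P}^2(\mathbb{C})$, and that the iterates $g^{j}Q(g)$ stay well defined, their supports avoiding the indeterminacy loci — this is precisely where hypotheses (ii) and (iii) enter. A secondary point is to match the infinitesimal descriptions of $F_\alpha$ and $G_\alpha$ above with the definitions of the two generic numbers of parameters introduced earlier in the paper.
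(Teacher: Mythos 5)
Your first inequality is exactly the paper's: both you and the authors apply Theorem \ref{inequality} (Theorem \ref{ineq} of the introduction) to the family $\widetilde{U}$, using the biholomorphism $\varphi\mapsto\varphi f$ and hypotheses (iii)--(iv); this part is fine. The gap is in the reverse inequality, precisely in the step you defer as ``the main obstacle'': the map $\Psi$. There are two distinct problems. First, elements of $S_N$ and $S_{kN}$ are \emph{ordered} sets of (possibly infinitely near) points, whereas the minimal desingularization data $P(g),Q(g)$ is canonical only up to reordering; hence the equivariance $\Psi(AgA^{-1})=A\,\Psi(g)$, as an identity in $S_{kN}$, is not automatic --- a conjugation could a priori permute the points. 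Ruling this out is exactly the content of Lemma \ref{minimal}, whose connected-component hypothesis is what your appeal to ``local constancy of the combinatorial type\dots{} by connectedness of $U$'' is silently standing in for; your sketch never addresses the ordering at all. Second, and more seriously, your chain-rule step is not justified as written: ``differentiating $\psi=\Psi\circ\Phi$'' uses the differential of $\Psi$ restricted to $\widetilde{U}$ (where $\Psi=\psi\circ\Phi^{-1}$ is holomorphic by construction), while ``equivariance of $\Psi$'' only gives information about $\Psi$ restricted to the conjugation orbit $O_{\alpha f}$. For a vector $w=\mathrm{d}\Phi_\alpha(v)$ that is merely tangent to both sets, these two restrictions need not have matching derivatives unless $\Psi$ is holomorphic on an ambient set containing a neighbourhood of $\alpha f$ in $\widetilde{U}\cup O_{\alpha f}$ --- and holomorphy of the minimal desingularization data on a full neighbourhood of $\alpha f$ in $\mathrm{Bir}_d(\mathbb{P}^2)$ is precisely what can fail, since the combinatorial type of the base locus jumps on subvarieties.

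Both problems can be repaired, but the repair essentially reconstructs the paper's proof. For generic $\alpha$, Lemma \ref{stupide} shows that $Z=\widetilde{U}\cap O_{\alpha f}$ is smooth near $\alpha f$ with $\mathrm{T}_{\alpha f}Z=\mathrm{T}_{\alpha f}\widetilde{U}\cap \mathrm{T}_{\alpha f}O_{\alpha f}$, so every such $w$ is realized by a curve lying inside $Z$; the statement you then actually need is that $\psi\circ\Phi^{-1}$ maps $Z$ into the $\mathrm{PGL}(3;\mathbb{C})$-orbit of $\psi(\alpha)$ in $S_{kN}$, with the correct ordering and with a holomorphic choice of conjugating matrices. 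This is what the paper proves: it takes a local holomorphic section $M$ of the orbit map along $Z$ with $M(\alpha f)=\mathrm{Id}$, applies Lemma \ref{minimal} to the connected family of pairs $(\varphi_z^{-1}M(z)\alpha,\,M(z))$ coming from $\varphi_z f=M(z)(\alpha f)M(z)^{-1}$ to get $M(z)\widehat{\xi}_1=\widehat{\xi}_1$ and $M(z)\alpha\widehat{\xi}_2=\varphi_z\widehat{\xi}_2$, hence $\psi(\varphi_z)=M(z)\psi(\alpha)$, and concludes with Theorem \ref{critere}. (Your alternative ending --- identifying $\ker \mathrm{KS}_\alpha(\mathfrak{X})$ as $(\mathrm{d}\psi_\alpha)^{-1}$ of the orbit tangent space via base-change functoriality and Theorem \ref{easy}, then counting dimensions --- is correct and is implicitly what the proof of Theorem \ref{critere} uses, so that piece is only a cosmetic difference.) In short: the skeleton of your argument is right, but the step you postponed is not a routine verification; it is the theorem's actual content, and filling it in forces you back to Lemma \ref{minimal}.
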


\par \medskip

\noindent\textbf{Acknowledgements. ---} We would like to thank E. Bedford, D. Cerveau and C. Favre for fruitful discussions and M. Manetti for the reference \cite{Ho}. We also thank the referee for his/her very helpful comments.

\bigskip

\section{Algebraic and dynamical properties of birational maps}

\subsection{First dynamical degree}

\noindent A {\it rational map} from $\mathbb{P}^2(\mathbb{C})$ into itself is a map of the following type
\begin{align*}
& f\colon\mathbb{P}^2(\mathbb{C})\dashrightarrow\mathbb{P}^2(\mathbb{C}),&&
(x:y:z)\mapsto\big(f_0(x,y,z):f_1(x,y,z):f_2(x,y,z)\big)
\end{align*}
where the $f_i$'s are homogeneous polynomials of the same degree without common factor. The {\it degree} of $f$ is equal to the degree of the $f_i$'s. A {\it birational map}
is a rational map whose inverse is also rational. The
birational maps of $\mathbb{P}^2(\mathbb{C})$ into itself form a group which is
called the {\it Cremona group} and denoted by $\mathrm{Bir}(\mathbb{P}^2).$ The elements of $\mathrm{Bir}(\mathbb{P}^2)$ are sometimes called {\it Cremona transformations}.

\medskip

\noindent If $f$ is a birational map, $\mathrm{Ind}\,f$ denotes the finite set of points blown up by $f;$ this is the set of the common zeroes of the~$f_i's.$ We say that $\mathrm{Ind}\,f$ is the {\it locus of indeterminacy} of~$f.$ The set of curves collapsed by $f,$ called {\it exceptional locus} of~$f,$ is deno\-ted by $\mathrm{Exc}\,f;$ it can be obtained by computing the jacobian determinant of $f.$

\medskip

\noindent The degree is not a birational invariant; if $f$ and $g$ are in $\mathrm{Bir}(\mathbb{P}^2),$
then usually $\deg (gfg^{-1})$ and $\deg f$ are different. Nevertheless there exist two strictly
positive constants $\alpha$ and $\beta$ such that for all integer $n$ the following holds:
\begin{align*}
\alpha\deg f^n\leq\deg (gf^ng^{-1})\leq\beta\deg f^n.
\end{align*}
\noindent This means that the degree growth is a birational invariant.

\medskip

\noindent Let us recall the
notion of first dynamical degree introduced in \cite{Fr, RS}: if $f$ is in $\mathrm{Bir}(\mathbb{P}^2),$ the {\it first dynamical degree} of  $f$ is defined by
\begin{align*}
\lambda(f)=\lim (\deg f^n)^{1/n}.
\end{align*}

\noindent More generally we can define this notion for bimeromorphic maps of a K\"{a}hler surface. A bimeromorphic map $f$ on a K\"{a}hler surface $X$ induces a map $f^*$ from $\mathrm{H}^{1,1}(X,\mathbb{R})$ into itself. The first dynamical degree of $f$ is given by
\begin{align*}
\lambda(f)=\lim (\vert (f^n)^*\vert)^{1/n}.
\end{align*}

\noindent Let $f$ be a map on a complex compact K\"{a}hler surface; the notions of first dynamical degree and topological entro\-py~$\mathrm{h}_{\text{top}}(f)$ are related by the following formula: $\mathrm{h}_{\text{top}}(f)=\log\lambda(f)$ (\emph{see} \cite{Gr1, Gr2, Yo}).

\noindent Diller and Favre characterize the birational maps of $\mathbb{P}^2(\mathbb{C})$ up to birational conjugacy; the case of automorphisms with quadratic growth is originally due to Gizatullin.

\begin{thm}[\cite{DiFa, Gi}]\label{DillerFavre}
{\sl Let $f$ be a bimeromorphic map of a K\"{a}hler surface. Up to bimeromorphic
conjugacy, one and only one of the following holds.
\begin{itemize}
\item The sequence $(\vert (f^n)^*\vert)_{n\in \mathbb{N}}$ is bounded, $f$ is an automorphism on some
rational surface and an iterate of $f$ is an automorphism isotopic
to the identity.

\item The sequence $(\vert (f^n)^*\vert)_{n\in \mathbb{N}}$ grows linearly and $f$ preserves a rational
fibration; in this case $f$ is not an automorphism.

\item The sequence $(\vert (f^n)^*\vert)_{n\in \mathbb{N}}$ grows quadratically and $f$ is an automorphism
preserving an elliptic fibration.

\item The sequence $(\vert (f^n)^*\vert)_{n\in \mathbb{N}}$ grows exponentially.
\end{itemize}
\medskip

\noindent In the second (resp. third) case, the invariant fibration is unique. In the three first
cases  $\lambda(f)$ is equal to $1,$ in the last case $\lambda(f)$ is strictly larger
than $1.$}
\end{thm}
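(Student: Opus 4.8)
The plan is to reduce the classification to the linear dynamics of the pullback on cohomology and then to give each resulting regime a geometric meaning. First I would replace $f$ by an \emph{algebraically stable} model: after finitely many blowups of the surface one may assume that no curve contracted by $f$ is ever sent by a forward iterate to a point of indeterminacy, and on such a model one has $(f^n)^* = (f^*)^n$ for every $n$. Since, as recalled just before the statement, the growth of $|(f^n)^*|$ is invariant under bimeromorphic conjugacy, the whole problem becomes the study of the single operator $f^*$ acting on $\mathrm{H}^{1,1}(X,\mathbb{R})$, and by definition $\lambda(f)$ is its spectral radius.

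Next I would bring in the Hodge index theorem, which gives $\mathrm{H}^{1,1}(X,\mathbb{R})$ the structure of a Lorentzian space of signature $(1,\rho-1)$. The growth of $|(f^n)^*|$ is then governed by the position of the spectrum of $f^*$ relative to the unit circle. If the spectral radius $\lambda$ exceeds $1$, one gets a leading eigenvalue $\lambda$ via a Perron--Frobenius argument in the Lorentzian cone, the norm grows like $\lambda^n$, and we are in the exponential case. If $\lambda = 1$, the Lorentzian geometry forces the growth to be \emph{polynomial of degree at most two}, the possible degrees $0$, $1$, $2$ corresponding to the bounded, linear and quadratic regimes of the statement. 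The degree-zero (bounded) case is the easiest to finish: boundedness lets one regularize $f$ to an automorphism of a rational surface; then $f^*$ is a bounded element of the discrete group $\mathrm{GL}(\mathrm{H}^2(X,\mathbb{Z}))$, hence of finite order, so some iterate acts trivially on cohomology and is thus isotopic to the identity.

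It remains to treat the polynomial regimes of degree $1$ and $2$ and to attach geometry to them. In both cases $f^*$ fixes a nonzero nef class $\theta$ with $\theta^2 = 0$, lying on the boundary of the Lorentzian cone. The core geometric step is to realize $\theta$ as a multiple of the class of the fibre of an $f$-invariant fibration $X \to C$: after passing to a rational multiple one shows $\theta$ is represented by an effective divisor whose linear system, once cleaned of base components, defines a fibration preserved by $f$. Reading the genus of the generic fibre off the adjunction and intersection data then separates the two cases, namely a rational fibration in the linear regime and an elliptic one in the quadratic regime, and the presence or absence of a contracted curve shows that $f$ fails to be an automorphism exactly in the linear (rational-fibration) case, while it is an automorphism in the quadratic (elliptic-fibration) case; uniqueness of the invariant fibration follows from uniqueness of the fixed isotropic ray in Lorentzian geometry. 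The quadratic, elliptic-fibration automorphisms are precisely Gizatullin's list, which completes this branch.

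The step I expect to be the main obstacle is this geometric realization when $\lambda = 1$: producing an honest fibration out of the purely cohomological invariant class $\theta$ (effectivity via Riemann--Roch, removal of base locus), correctly identifying the genus of the fibre with the polynomial degree of the growth, and proving the automorphism versus non-automorphism dichotomy. By contrast, the algebraic stabilization, the Lorentzian bound forcing polynomial growth of degree at most two when $\lambda = 1$, and the finite-order regularization of the bounded case are comparatively standard once the Hodge-index framework is in place.
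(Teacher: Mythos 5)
First, a point of comparison that matters here: the paper does \emph{not} prove Theorem \ref{DillerFavre} at all. It is imported as a black box from Diller--Favre \cite{DiFa} and Gizatullin \cite{Gi}, and is only invoked later to interpret the characteristic polynomials of $(\varphi\Phi_n)_*$ in the explicit examples. So your sketch cannot be measured against an internal argument; the benchmark is the cited proof, whose architecture (algebraically stable model, spectral analysis of $f^*$ on $\mathrm{H}^{1,1}(X,\mathbb{R})$ via the Hodge index theorem, invariant isotropic nef class, promotion of that class to an invariant fibration) your outline does broadly reproduce.

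That said, two of your steps are genuinely flawed as stated, not merely compressed. (a) The automorphism/non-automorphism dichotomy cannot be detected by ``the presence or absence of a contracted curve'': the theorem is a statement \emph{up to bimeromorphic conjugacy}, i.e.\ about all birational models of $f$ simultaneously. An algebraically stable model of a quadratic-growth map may perfectly well still contract curves, and yet the theorem asserts that $f$ \emph{is} conjugate to an automorphism in that case; conversely, exhibiting one model with a contracted curve proves nothing in the linear case. The correct mechanism is cohomological: an automorphism induces an isometry of the hyperbolic lattice $\mathrm{H}^{1,1}$, and isometries of a Lorentzian space have bounded (elliptic), quadratic (parabolic) or exponential (loxodromic) norm growth --- never linear; since the growth type is a birational invariant (the inequality $\alpha\deg f^n\leq\deg(gf^ng^{-1})\leq\beta\deg f^n$ recalled in the paper), linear growth excludes conjugacy to an automorphism, while in the quadratic case one must still positively \emph{construct} the automorphism model via the invariant genus-one fibration and relative minimalization (this is Gizatullin's contribution). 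Your inference of the fibre genus ``from adjunction and the growth degree'' in fact runs through this same dichotomy, so the logic is inverted. (b) Your claim that when $\lambda(f)=1$ ``Lorentzian geometry forces polynomial growth of degree at most two'' is not a formal fact: for a birational map, $f^*$ is \emph{not} an isometry of the intersection form (only automorphisms preserve it); it merely preserves the nef cone, and a linear operator of spectral radius $1$ preserving a proper convex cone can have polynomial growth of arbitrary degree (a single unipotent Jordan block already does). Bounding the degree by $2$ requires the push--pull/intersection-theoretic inequalities that algebraic stability affords, and is one of the substantive results of \cite{DiFa}, not a consequence of the Hodge-index framework alone.
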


\begin{egs} Let us give some examples.
\begin{itemize}
\item If $f$ is an automorphism of $\mathbb{P}^2(\mathbb{C})$ or a birational map of finite order, then $(\deg f^n)_n$ is bounded.

\item The map $f=(xy:yz:z^2)$ satisfies that $(\deg f^n)_n$
grows linearly.

\item The map $f_\varepsilon=\big((y+z)(y+z-\varepsilon z):x(y-\varepsilon z):(y+z)z\big)$ grows quadratically as soon as $\varepsilon$ belongs to $\{1/2,1/3\}$ (\emph{see} \cite[Prop. \!9.5]{DiFa}).

\item A H\'enon map, {\it i.e.} an automorphism of $\mathbb{C}^2$ of the form
\begin{align*}
& f=(y,P(y)-\delta x), && \delta\in\mathbb{C}^*,\, P\in\mathbb{C}[y],\,\deg P\geq 2
\end{align*}
can be viewed as a birational map of $\mathbb{P}^2(\mathbb{C})$ and $\lambda(f)=\deg P>1.$
\end{itemize}
\end{egs}

\noindent Let $f$ be a bimeromorphic map on a K\"{a}hler surface $X.$ To relate $\lambda(f)$ to the spectral radius of $f^*$ we need the equali\-ty~$(f^*)^n=(f^n)^*$ for all $n.$ When it occurs we say that $f$ is {\it analytically stable} (\cite{FS, S}). An other characterization can be found in \cite[Th. \!1.14]{DiFa}: the map $f$ is analytically stable if and only if there is no cur\-ve~$\mathcal{C}$ in $X$ such that $f^k(\mathcal{C}) \subset\mathrm{Ind}\, f$ for some integer $k\geq 0.$ Up to a birational change of coordinates, one can always arrange for a bimeromorphic map of a Kähler surface to be analytically stable (\emph{see} \cite[Th. \!0.1]{DiFa}). For instance, if $f$ is an automorphism, then $f$ is analytically stable and $\lambda (f)$ is the spectral radius of $f^*.$ Besides, since $f^*$ is defined over $\mathbb{Z},$ $\lambda (f)$ is also the spectral radius of $f_*.$

\par \medskip

\noindent Let us recall some properties about blowups of the complex projective plane. Let $p_1,$ $\ldots,$ $p_n$ be $n$ (possibly infinitely near) points in $\mathbb{P}^2(\mathbb{C})$ and $\mathrm{Bl}_{p_1,\ldots,p_n} \mathbb{P}^2$ denote the complex manifold obtained by blowing up $\mathbb{P}^2(\mathbb{C})$ at $p_1,$ $\ldots,$ $p_n.$

\begin{itemize}

\item We can identify $\mathrm{Pic}(\mathrm{Bl}_{p_1,\ldots,p_n}\mathbb{P}^2)$ and $\mathrm{H}^2(\mathrm{Bl}_{p_1,\ldots,p_n}\mathbb{P}^2, \mathbb{Z})$ so we won't make any difference in using them.

\item If $\pi\colon\mathrm{Bl}_{p_1,\ldots,p_n} \mathbb{P}^2\to \mathbb{P}^2(\mathbb{C})$ is the sequel of blowups of the $n$ points $p_1,$ $\ldots,$ $p_n,$ $\mathrm{H}$ the class of a generic line and $\mathrm{E}_j=\pi^{-1}(p_j)$ the exceptional fibers, then $\{\mathrm{H},\,\mathrm{E}_1,\,\ldots,\,\mathrm{E}_n\}$ is a basis of the free $\mathbb{Z}$-module
$\mathrm{Pic}(\mathrm{Bl}_{p_1,\ldots,p_n} \mathbb{P}^2).$

\item Assume that $n\leq 9$ and that $f\colon\mathrm{Bl}_{p_1,\ldots,p_n} \mathbb{P}^2\to\mathrm{Bl}_{p_1,\ldots,p_n}\mathbb{P}^2$ is an automorphism. Then the topological entropy of~$f$ vanishes. If $n\leq 8$ then there exists an integer $k$ such that $f^k$ descends to a linear map of $\mathbb{P}^2(\mathbb{C})$ (\emph{see} \cite[Prop. 2.2]{Di}).
\end{itemize}

\noindent In the sequel, $\mathbb{P}^2$ will denote the complex projective plane.

\subsection{Desingularization of birational maps}\label{desing}

\noindent In this section, we recall well-known results about birational maps between algebraic surfaces. We refer the reader to \cite[IV \S 3.4]{Sh} for more details.

\begin{itemize}
\item Every regular map $f \colon X \rightarrow Y$ between smooth projective surfaces which is birational can be written as $\varphi \circ \pi_N \circ \pi_{N-1} \circ \ldots \circ \pi_1,$ where the $\pi_i$'s are blowups and $\varphi$ is an isomorphism. Besides, the centers of the blowups are uniquely determined by $f.$

\item If $f \colon X \dashrightarrow Y$ is a rational map between smooth projective surfaces, there exist a canonical rational surface $\widetilde{X}$ obtained from $X$ by a finite sequence of blowups such that, if $\pi$ is the composition of the blowups, $f \circ \pi$ is a regular map. Any other rational surface satisfying the same property is obtained by blowing up $\widetilde{X}$ finitely many times. The surface $\widetilde{X}$ is called the minimal desingularization of $f.$

\item If $f \colon X \dashrightarrow Y$ is a birational map between smooth projective surfaces and if $\widetilde{X},$ $\widetilde{Y}$ are the minimal desingularizations of $f$ and $f^{-1},$ then $f$ induces an isomorphism between $\widetilde{X}$ and $\widetilde{Y}.$
\end{itemize}

\noindent Thus, for any rational map $f \colon X \dashrightarrow Y$  between smooth projective surfaces, there exist two canonical sets of (possibly infinitely) near points $\widehat{\xi}_1$ and $\widehat{\xi}_2$ in $X$ and $Y$ such that $f$ induces a canonical isomorphism between $\mathrm{Bl}_{\widehat{\xi}_1}X$ and $\mathrm{Bl}_{\widehat{\xi}_2}Y.$ We say that $\widehat{\xi}_1$ and $\widehat{\xi}_2$ correspond to the minimal desingularization of $f.$ In the sequel, we deal with \textit{ordered} sets of possibly infinitely near points of $\mathbb{P}^2,$ so that we usually order $\widehat{\xi}_1$ and $\widehat{\xi}_2.$ In that case, $\widehat{\xi}_1$ and $\widehat{\xi}_2$ are canonical up to a reordering.

\bigskip

\noindent One of the main properties of the minimal desingularization of a rational map is:

\begin{lem}\label{minimal}
{\sl Let $f \colon X \dashrightarrow Y$ is a rational map between smooth projective surfaces, $\widehat{\xi}_1$ and $\widehat{\xi}_2$ be ordered sets of (possibly infinitely) near points in $X$ and $Y$ corresponding to the minimal desingularization of $f$ and $G$ be the subgroup of $\mathrm{Aut}(X) \times \mathrm{Aut}(Y)$ consisting of couples $(A,B)$ such that $Af=fB.$ If $(A,B)$ is in the connected component of the identity of $G,$ then $A \, \widehat{\xi}_2=\widehat{\xi}_2$
and $B \, \widehat{\xi}_1=\widehat{\xi}_1.$}
\end{lem}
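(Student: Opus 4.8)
The plan is to deduce everything from the canonicity of the minimal desingularization, using the connectedness hypothesis only at the very end to pass from an equality of \emph{unordered} clusters to an equality of \emph{ordered} ones. Write $\widetilde{X}=\mathrm{Bl}_{\widehat{\xi}_1}X$ and $\widetilde{Y}=\mathrm{Bl}_{\widehat{\xi}_2}Y$, and let $\tilde{f}\colon\widetilde{X}\to\widetilde{Y}$ be the isomorphism induced by $f$. The relation $Af=fB$ is read as $A\circ f=f\circ B$, so that $B$ acts on the source $X$ by precomposition and $A$ acts on the target $Y$ by postcomposition; this is what makes the two equalities $A\,\widehat{\xi}_2=\widehat{\xi}_2$ and $B\,\widehat{\xi}_1=\widehat{\xi}_1$ meaningful.

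First I would record how the two canonical clusters behave under composition with an automorphism. Since an automorphism is everywhere defined and invertible, a blowup $\pi\colon X'\to X$ turns $h\circ C$ into a morphism if and only if it turns $h$ into a morphism (here $C$ precomposes); hence the minimal desingularization transports the source cluster by $C^{-1}$ and leaves the target cluster unchanged. Dually, postcomposition by an automorphism $D$ of the target leaves the source cluster unchanged and carries the target cluster to its image under $D$. Applying this to the two sides of $A\circ f=f\circ B$, the map $A\circ f$ has source cluster $\widehat{\xi}_1$ and target cluster $A\,\widehat{\xi}_2$, whereas $f\circ B$ has source cluster $B^{-1}\widehat{\xi}_1$ and target cluster $\widehat{\xi}_2$. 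As these describe the \emph{same} rational map, the uniqueness of the minimal desingularization forces $B^{-1}\widehat{\xi}_1=\widehat{\xi}_1$ and $A\,\widehat{\xi}_2=\widehat{\xi}_2$, but \emph{a priori only as unordered clusters}, i.e. up to a permutation of the marked points respecting the infinitely near structure. Note that this step already holds for \emph{every} $(A,B)\in G$, the connectedness not being used yet.

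It remains to kill this permutation, and this is where the identity component enters. Since $B$ preserves the cluster $\widehat{\xi}_1$, it lifts to an automorphism $\widetilde{B}$ of $\widetilde{X}$, and the assignment $(A,B)\mapsto\widetilde{B}$ is a morphism of groups $G\to\mathrm{Aut}(\widetilde{X})$ sending the identity to the identity; it therefore maps the connected component $G^{0}$ into the identity component $\mathrm{Aut}^{0}(\widetilde{X})$. Now the action of $\mathrm{Aut}(\widetilde{X})$ on the lattice $\mathrm{Pic}(\widetilde{X})=\mathrm{H}^{2}(\widetilde{X},\mathbb{Z})$ factors through a discrete group, so $\mathrm{Aut}^{0}(\widetilde{X})$ acts trivially; in particular $\widetilde{B}$ fixes every exceptional class $\mathrm{E}_i$. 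Since each $\mathrm{E}_i$ is the class attached to the $i$-th marked point of the ordered cluster $\widehat{\xi}_1$, the permutation induced by $B$ is trivial, that is $B\,\widehat{\xi}_1=\widehat{\xi}_1$ as ordered clusters. The identical argument applied to the lift $\widetilde{A}\in\mathrm{Aut}(\widetilde{Y})$ yields $A\,\widehat{\xi}_2=\widehat{\xi}_2$, which finishes the proof.

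The hard part will be the passage from unordered to ordered equality in the presence of infinitely near points: one must check that $B$ induces a genuine permutation of the marked points compatible with the proximity relations (so that the lift $\widetilde{B}$ exists and permutes the exceptional classes), and that this permutation is locally constant along $G$. Expressing the rigidity through the $\mathbb{Z}$-linear action on $\mathrm{Pic}(\widetilde{X})$ treats all levels of the infinitely near tree simultaneously and keeps the connectedness argument clean; the only remaining point requiring care is the standard fact that $G$, being cut out in $\mathrm{Aut}(X)\times\mathrm{Aut}(Y)$ by the closed condition $Af=fB$, is a complex Lie (indeed algebraic) group, so that the notion of ``connected component of the identity'' and the continuity of $(A,B)\mapsto\widetilde{B}$ are legitimate.
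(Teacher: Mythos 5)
Your proof is correct, but it takes a genuinely different route from the paper's. The paper argues by induction on the number of blowups in the minimal desingularization: a continuous path $(A_t,B_t)$ in $G$ from the identity to $(A,B)$ induces a continuously varying permutation of the finite indeterminacy locus of $f$ (resp.\ of $f^{-1}$), which must therefore be trivial; one then blows up one indeterminacy point on each side, lifts the path to the blown-up surfaces, and applies the induction hypothesis to the induced map $f'$. You instead split the statement in two: uniqueness of the minimal desingularization, applied to the two factorizations of $A\circ f=f\circ B$, shows that \emph{every} $(A,B)$ in $G$ preserves the two clusters up to reordering, and connectedness is used exactly once, via the trivial action of $\mathrm{Aut}^0(\widetilde{X})$ on the cohomology lattice, to kill the residual permutation. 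Your version avoids the induction, isolates precisely where the identity-component hypothesis enters, and yields the extra information about arbitrary elements of $G$; its cost is the continuity of the lifting homomorphism $(A,B)\mapsto\widetilde{B}$ to the full iterated blowup, which you rightly flag (it follows from the universal property of blowing up the cluster ideal, applied to the map $(B,x)\mapsto B(\pi(x))$), whereas the paper's induction only ever lifts along a single blowup at a fixed point, where continuity is evident. Two harmless slips: the identification $\mathrm{Pic}(\widetilde{X})=\mathrm{H}^2(\widetilde{X},\mathbb{Z})$ fails for general smooth projective surfaces (the lemma is not restricted to rational ones), but your argument only needs that $\mathrm{Aut}^0(\widetilde{X})$ acts trivially on $\mathrm{H}^2(\widetilde{X},\mathbb{Z})$ (or on $\mathrm{NS}(\widetilde{X})$) and that the exceptional classes are pairwise distinct there, which holds in general; and the sentence asserting that $\pi$ turns $h\circ C$ into a morphism if and only if it turns $h$ into a morphism is literally false as stated---the centers must be transported by $C$---though the conclusion you draw from it, namely that precomposition by $C$ moves the source cluster by $C^{-1}$ and fixes the target cluster, is the correct one.
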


\begin{proof}
We argue by induction on the number of blowups in the minimal desingularization of $f.$ Let $(A,B)$ be in~$G$ and~$(A_t, B_t)_{\, 0 \leq t \leq 1}$ be a continuous path in $G$ between $(\mathrm{id},\mathrm{id})$ and $(A,B).$
For all $t$ in $[0,1]$ the matrix $B_t$ induces a permutation of the points of the locus of indeterminacy of $f.$ Since $B_0=\mathrm{id},$ each point of this locus must be fixed. Reasoning with $f^{-1}$ instead of $f,$ we obtain that $A$ fixes the points of indeterminacy of $f^{-1}.$ Let $p$ (resp. $q$) be one point of indeterminacy of $f$ (resp. $f^{-1}$). If $X'=\mathrm{Bl}_p X$ and $Y'=\mathrm{Bl}_q Y,$ $f$ induces a rational map $f' \colon X' \dashrightarrow Y'.$ Besides, if~$\widehat{\xi}'_{1}=~\widehat{\xi}_{1} \setminus \{p\}$ and $\widehat{\xi}'_{2}=\widehat{\xi}_{2} \setminus \{q\},$ then $\widehat{\xi}'_{1}$
and $\widehat{\xi}'_{2}$ correspond to the minimal desingularization of $f'.$ The automorphisms~$A$ and $B$ of $\mathbb{P}^2$ can be lifted to automorphisms $A'$ and $B'$ of $X'$ and $Y'$ respectively; and by induction, $A' \, \widehat{\xi}'_2=\widehat{\xi}'_2$
and $B' \, \widehat{\xi}'_1=\widehat{\xi}'_1.$ This yields the result.
\end{proof}

\noindent In \S \ref{casgal}, \S \ref{eg12} and \S \ref{eg2}, we will compute in concrete examples ordered sets of (possibly infinitely near) points of the complex projective plane corresponding to minimal desingularizations of Cremona transformations.

\subsection{Generic number of parameters of a family of Cremona transformations}\label{jabuse}

\noindent For every positive integer $d,$ let $\mathrm{Bir}_d(\mathbb{P}^2)$ be the set of birational maps of the complex projective plane given by a triplet of homogeneous polynomials of degree $d$ without common factors. Then $\mathrm{Bir}_d(\mathbb{P}^2)$ is a Zariski open subset of $\mathbb{P}^{\frac{3(d+1)(d+2)}{2}}.$ We will use two actions of the algebraic group $\mathrm{PGL}(3; \mathbb{C})$ on $\mathrm{Bir}_d(\mathbb{P}^2),$ namely:

\begin{itemize}
\item The \textit{left action}, given for $M$ in $\mathrm{PGL}(3; \mathbb{C})$ and $f$ in $\mathrm{Bir}_d(\mathbb{P}^2)$ by $M.f=Mf.$

\item The \textit{adjoint action}, given for $M$ in $\mathrm{PGL}(3; \mathbb{C})$ and $f$ in $\mathrm{Bir}_d(\mathbb{P}^2)$ by $M.f=MfM^{-1}.$
\end{itemize}

\noindent The associated orbits will be called \textit{left orbits} and \textit{adjoint orbits} in order to distinguish them. In this section, we will be mainly concerned with the adjoint action. From the point of view of holomorphic dynamical systems, two Cremona transformations belonging to the same adjoint orbit are essentially similar, since they are conjugate by a biholomorphism.
\par\medskip

\noindent By definition, a holomorphic family of birational maps of degree $d$ will be an irreducible analytic subset of~$\mathrm{Bir}_d(\mathbb{P}^2).$  To associate with a holomorphic family of Cremona transformations a generic number of parameters, we need a general result concerning holomorphic group actions:

\begin{lem} \label{stupide}
{\sl Let $G$ be a complex Lie group acting holomorphically on a complex manifold $X$ and $Y$ be an irreducible analytic subset of $X.$ Then for any generic element in $Y,$ the intersection of the orbit $O_y$ of $y$ with $Y$ is smooth in a neighborhood of $y,$ and its dimension is independent of $y.$}
\end{lem}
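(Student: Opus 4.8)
The plan is to deduce the statement from the holomorphic Frobenius theorem applied to the \emph{trace} on $Y$ of the partition of $X$ into $G$-orbits. First I would make two harmless reductions: replacing $G$ by its identity component (this does not change the germ of $O_y$ at $y$, since $G^{\circ}\cdot y$ is open in $O_y$), so that $G$ is connected and orbits are connected; and replacing $Y$ by its smooth locus, which is dense and open, so that a generic $y$ is a smooth point of $Y$. For $y\in X$ let $\rho_y\colon\mathfrak g\to\mathrm T_yX$ be the infinitesimal action, so that $\mathrm T_yO_y=\mathrm{im}\,\rho_y$ and $\dim O_y=\mathrm{rank}\,\rho_y$, and set $k(y)=\dim(\mathrm T_yY\cap\mathrm{im}\,\rho_y)$. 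Since $\mathrm{rank}\,\rho_y$ is lower semicontinuous and $\dim(\mathrm T_yY\cap\mathrm{im}\,\rho_y)$ is upper semicontinuous, both are constant, say equal to $r$ and $k$, on a dense open subset $Y^{0}$ of $Y$ (irreducibility of $Y$ is used here). Because $O_y\cap Y$ lies in both the smooth germ $O_y$ and the smooth germ $Y$, its Zariski tangent space at $y$ is contained in $\mathrm T_yO_y\cap\mathrm T_yY$, which gives the easy bound $\dim_y(O_y\cap Y)\le k$.

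The core of the argument is to build, over $Y^{0}$, a holomorphic foliation whose leaves are the germs $O_y\cap Y$. Over $Y^{0}$ the bundle map $\rho\colon\underline{\mathfrak g}\to\mathrm TX|_{Y^{0}}$ has constant rank $r$, so its image $D:=\mathrm{im}\,\rho$ is a rank-$r$ holomorphic subbundle of $\mathrm TX|_{Y^{0}}$, and by constancy of $k$ the intersection $E:=D\cap\mathrm TY^{0}$ is a rank-$k$ holomorphic subbundle of $\mathrm TY^{0}$. I would then check that $E$ is involutive: a local section $V$ of $E$ can be written $V=\rho(\zeta)|_{Y^{0}}$ for some holomorphic $\mathfrak g$-valued $\zeta$; extending $\zeta$ arbitrarily to a neighbourhood in $X$, the field $\widehat V=\rho(\zeta)$ is tangent to $Y^{0}$ along $Y^{0}$ (because $V\in\mathrm TY^{0}$) and takes values in $D$. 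A direct computation shows that $[\rho(\zeta),\rho(\eta)]$ is again of the form $\rho(\,\cdot\,)$, hence takes values in $D$; and since brackets of fields tangent to $Y^{0}$ are tangent to $Y^{0}$ and restrict to the intrinsic bracket, $[V,W]$ is a section of $D\cap\mathrm TY^{0}=E$. By the holomorphic Frobenius theorem, $E$ integrates to a foliation of $Y^{0}$ by smooth $k$-dimensional leaves. The leaf $L_y$ through $y$ is tangent to $D$, hence (in slice coordinates at $y$, where $D=\ker\mathrm dt$ and $O_y=\{t=0\}$) locally contained in $O_y$; as $L_y\subseteq Y$ as well, we get $L_y\subseteq O_y\cap Y$. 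This yields the reverse inequality $\dim_y(O_y\cap Y)\ge k$, so $\dim_y(O_y\cap Y)=k$ for every generic $y$, which already gives the independence of the dimension.

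It remains to upgrade the inclusion $L_y\subseteq O_y\cap Y$ to an equality of germs. In Frobenius coordinates on $Y^{0}$ in which the leaves are the slices $\{u_{k+1}=\mathrm{const},\dots,u_m=\mathrm{const}\}$, each point of $O_y\cap Y^{0}$ drags its whole plaque into $O_y$ (a plaque lies in a single leaf, hence in a single orbit), so near $y$ the set $O_y\cap Y$ is $u_{>k}^{-1}(C)$ for the analytic germ $C$ of transverse values whose slice meets $O_y$; its local dimension equals $k+\dim_0 C$. Having just shown this dimension is $k$, we get $\dim_0 C=0$, so $0$ is isolated in $C$, and in a small enough neighbourhood of $y$ the intersection $O_y\cap Y$ reduces to the single plaque $L_y$. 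Hence $O_y\cap Y$ is smooth of dimension $k$ near every $y\in Y^{0}$, as claimed.

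I expect the genuine obstacle to be the construction of $E$ and the proof of its involutivity, precisely \emph{because the intersection need not be transverse} --- an orbit can be everywhere tangent to $Y$, so smoothness cannot be obtained from a transversality or Sard argument, and one is forced to produce the trace foliation directly. The delicate points are the involutivity computation above (which rests on the two extensions being simultaneously tangent to $Y^{0}$ and valued in the orbit distribution), and the very last step of ruling out extra local branches of $O_y\cap Y$ at $y$, for which the dimension equality $\dim_y(O_y\cap Y)=k$ produced by Frobenius is exactly what is needed.
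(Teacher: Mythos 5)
Your strategy is genuinely different from the paper's, and it is aimed at the right difficulty. The paper's own proof is very short: it establishes the generic constancy of $\dim(\mathrm{T}_y O_y\cap\mathrm{T}_y Y)$, then \emph{asserts} the equality $\mathrm{T}_{\tilde y}(O_y\cap Y)=\mathrm{T}_{\tilde y}O_{\tilde y}\cap\mathrm{T}_{\tilde y}Y$ at every point $\tilde y$ of $O_y\cap Y$, and concludes by the classical fact that an analytic set whose Zariski tangent spaces all have the same dimension is smooth. The inclusion $\subseteq$ in that equality is trivial; the inclusion $\supseteq$ is exactly what your Frobenius construction is designed to produce, since the leaf through $\tilde y$ is a $k$-dimensional piece of $O_y\cap Y$ whose tangent space is $\mathrm{T}_{\tilde y}O_{\tilde y}\cap\mathrm{T}_{\tilde y}Y$. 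Your semicontinuity step (with the caveat that $\dim(\mathrm{T}_yY\cap\mathrm{im}\,\rho_y)$ is upper semicontinuous only after first restricting to the open set where $\mathrm{rank}\,\rho$ is maximal, which is how $Y^0$ should be defined anyway), the involutivity computation for $E=D\cap\mathrm{T}Y^{0}$, and the final plaque argument are all correct; both you and the paper tacitly use that $O_y\cap Y$ is analytic near $y$, i.e.\ that orbits are locally closed there, which fails for general holomorphic actions (a dense one-parameter orbit on a complex torus) but holds in the paper's algebraic application, so I do not count it against you.

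There is, however, one genuine gap, and it sits at the only place where the group action must be used a second time: the claim that the leaf $L_y$ of $E$ through $y$ is contained in $O_y$. Your justification --- ``in slice coordinates at $y$, where $D=\ker \mathrm{d}t$ and $O_y=\{t=0\}$'' --- presupposes coordinates whose existence is essentially the point at issue. The bundle $D$ is defined only along $Y^{0}$, and the rank of $\rho$ can jump upward immediately off $Y^{0}$ (lower semicontinuity only gives $\geq r$ nearby), so $\mathrm{im}\,\rho$ need not extend to a constant-rank integrable distribution on a neighborhood of $y$ in $X$; and merely flattening $O_y$ to $\{t=0\}$ does not make the orbit directions at nearby points horizontal: for the $\mathbb{C}$-action on $\mathbb{C}^2$ generated by $\partial_x+y\,\partial_y$, one has $O_{(0,0)}=\{y=0\}$, yet the orbit through $(0,\varepsilon)$ is not tangent to $\ker \mathrm{d}y$. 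Since the points of $L_y$ are not yet known to lie on $O_y$, tangency of $L_y$ to $D$ by itself says nothing about $t$ being constant along $L_y$. The standard repair uses flows rather than coordinates: for $\zeta$ holomorphic and $\mathfrak{g}$-valued, the flow of $\widehat{V}=\rho(\zeta)$ preserves every $G$-orbit --- solve the companion equation $\dot g(t)=\sum_i \zeta_i(\gamma(t))\,Z_i^{R}(g(t))$ in $G$ and check that $g(t)\cdot x$ is the integral curve of $\widehat{V}$ through $x$ --- and a neighborhood of any point in a leaf of $E$ is swept out by composing flows of finitely many such fields, which are moreover tangent to $Y^{0}$; hence each leaf lies in a single orbit. (Alternatively: $G\times Y^{0}\to X$ has constant rank by equivariance, so the local saturation $\Sigma=G\cdot Y^{0}$ is a submanifold on which $\mathrm{im}\,\rho$ has constant rank, and Frobenius on $\Sigma$ produces exactly the slice coordinates you invoke.) With this step supplied, your proof is complete, and it in fact proves the inclusion that the paper's own argument leaves unjustified.
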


\begin{proof}
Let $\mathfrak{g}$ be the Lie algebra of $G.$ Each element $Z$ of $\mathfrak{g}$ induces
a holomorphic vector field $X_Z$ on $X$ corresponding to the infinitesimal action of $G$ on $X$ in the direction $Z.$ We fix a basis $Z_1, \ldots, Z_k$ of $\mathfrak{g}.$ Then, for any $x$ in $X,$ $\mathrm{Vect}(X_{Z_1}(x), \ldots, X_{Z_k}(x))$ is equal to $T_x O_x.$ This proves that for $y$ generic in $Y,$ the dimension of $T_y O_y \cap T_y Y$ is independent of $Y$; we call it $m.$ After removing a proper analytic subset out of $Y$ if necessary, we can assume that this property holds for all $y$ in $Y.$ Let $y$ be any point in $Y.$ Then, for any $\tilde{y}$ in $O_y \cap Y,$ $T_{\tilde{y}}(O_y \cap Y)=T_{\tilde{y}} O_y \cap T_{\tilde{y}} Y=T_{\tilde{y}} O_{\tilde{y}} \cap T_{\tilde{y}} Y,$ so that the Zariski tangent spaces of the analytic set $O_{y} \cap Y$ all have the same dimension $m.$ This implies that $O_{y} \cap Y$ is smooth of dimension $m.$
\end{proof}

\noindent We apply this lemma for $X=\mathrm{Bir}_d(\mathbb{P}^2)$ and $G=\mathrm{PGL}(3; \mathbb{C}).$ This justify the following definition:

\begin{defi}
If $d$ is a positive integer and $Y$ is a holomorphic family of birational maps of degree $d,$ we define the \textit{generic number of parameters of Y}, denoted by $\mathfrak{m}(Y),$ by $\mathfrak{m}(Y)= \mathrm{dim}\, Y-\mathrm{dim} (O_f \cap Y)^{irr},$ where $f$ is a generic element in $Y,$ $O_f$ is the adjoint orbit of $f$ in $\mathrm{Bir}_d(\mathbb{P}^2)$ and $(O_f \cap Y)^{irr}$ is the irreducible component of $f$ in $O_f \cap Y.$
\begin{itemize}
\item If $\mathfrak{m}(Y)=0,$ we say that $Y$ is \textit{holomorphically trivial}.
\item If $\mathfrak{m}(Y)=\mathrm{dim}\,Y,$ we say that $Y$ is \textit{generically effective}.
\end{itemize}
\end{defi}

\noindent If a holomorphic family $Y$ of Cremona transformations is holomorphically trivial, then for any generic point $y$ in $Y$, $O_y \cap Y$ is an open neighborhood of $y$ in $Y$ for the usual topology. We can be even more precise: for any point $f$ in $\mathrm{Bir}_d (\mathbb{P}^2),$ the adjoint orbit $O_f$ of $f$ is Zariski-open in its Zariski closure, so that $O_f \cap Y$ is an analytic subset of $Y.$ If $y$ is generic in $Y,$ then $\dim (O_y \cap Y)^{irr}=\dim\, Y.$ This implies that $(O_y \cap Y)^{irr}$ is Zariski open in $Y,$ so that all generic points of $Y$ lie in the same adjoint orbit. This means that the parameters of $Y$ are "fake" parameters.

 \par \smallskip

\noindent On the other hand, if a holomorphic family $Y$ of birational maps is generically effective, then for any generic element $f$ in $Y,$ there exists a neighborhood $U$ of $f$ in $Y$ such that $O_f \cap U=\{f\}.$ Besides, $U$ can be chosen open for the Zariski topology.

\par\medskip

\noindent In concrete examples, the generic number of parameters of a family of Cremona transformations can be computed easily using the following proposition:

\begin{pro}\label{critere0}
{\sl Let $Y$ be a holomorphic family of birational maps of dimension $n.$ Then $\mathfrak{m}(Y)$ is the smallest integer $k$ such that for a generic transformation $f$ in $Y,$ there exist a neighborhood $\Omega$ of $0$ in $\mathbb{C}^{n-k}$ and two holomorphic maps $\gamma \colon \Omega \rightarrow Y$ and $M \colon \Omega \rightarrow \mathrm{PGL}(3; \mathbb{C})$ such that:
 \begin{itemize}
\item $\gamma_{*}(\, 0)$ is injective,
\item $\gamma\, (\, 0)=f$ and $M(0\!)=\mathrm{Id},$
\item for all $t$ in $\Omega,$ $f_{\gamma \,(t \!)}=M(\, t)\, f\, M(\, t)^{-1}\,.$
\end{itemize}}
\end{pro}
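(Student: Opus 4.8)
The plan is to compare directly the two quantities involved: on one side the number $\mathfrak{m}(Y)=\dim Y-\dim(O_f\cap Y)^{irr}$ coming from the definition, and on the other the smallest $k$ for which the data $(\Omega,\gamma,M)$ of the statement exist. I would fix a generic $f$ in $Y$ and set $d=\dim(O_f\cap Y)^{irr}$, so that $\mathfrak{m}(Y)=n-d$ and, by Lemma \ref{stupide}, $O_f\cap Y$ is smooth of dimension $d$ in a neighbourhood of $f$. The goal is then to establish two inequalities: first that the value $k=n-d$ is admissible, by producing the required maps on a ball $\Omega$ in $\mathbb{C}^{n-k}=\mathbb{C}^d$; and second that every admissible $k$ satisfies $k\geq n-d$. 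Together these identify $\mathfrak{m}(Y)$ with the smallest admissible $k$, which is the assertion.

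For the first inequality I would start from a holomorphic parametrization $\gamma\colon\Omega\to Y$ of the smooth germ $(O_f\cap Y)^{irr}$ at $f$, with $\Omega$ a ball in $\mathbb{C}^d$, $\gamma(0)=f$ and $\gamma_*(0)$ injective; such a chart exists precisely because of the smoothness furnished by Lemma \ref{stupide}. By construction $\gamma(t)$ lies in the adjoint orbit $O_f$ for every $t$, so it only remains to build a holomorphic $M\colon\Omega\to\mathrm{PGL}(3;\mathbb{C})$ with $M(0)=\mathrm{Id}$ and $\gamma(t)=M(t)\,f\,M(t)^{-1}$. This is exactly a local holomorphic lift of $\gamma$ through the orbit map $\mathrm{PGL}(3;\mathbb{C})\to O_f$, $M\mapsto M f M^{-1}$.

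The main obstacle, and the only nonformal step, is the existence of this lift. The orbit map is a surjective holomorphic submersion onto the homogeneous space $O_f\simeq\mathrm{PGL}(3;\mathbb{C})/\mathrm{Stab}(f)$, whose fibres are the cosets of the (closed, algebraic) stabilizer of $f$; it is therefore a locally trivial holomorphic principal bundle and admits local holomorphic sections. Choosing a section $s$ defined near $f$ with $s(f)=\mathrm{Id}$ and setting $M=s\circ\gamma$ gives, after shrinking $\Omega$, a lift with $M(0)=\mathrm{Id}$ and $M(t)\,f\,M(t)^{-1}=\gamma(t)$. This settles the inequality $k\leq n-d$.

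For the reverse inequality, suppose $(\Omega,\gamma,M)$ realize some admissible $k$, with $\Omega$ a ball in $\mathbb{C}^{n-k}$. The relation $\gamma(t)=M(t)\,f\,M(t)^{-1}$ forces $\gamma(\Omega)\subset O_f$, while $\gamma$ takes values in $Y$, so $\gamma(\Omega)\subset O_f\cap Y$. Since $\gamma_*(0)$ is injective, after shrinking $\Omega$ the image $\gamma(\Omega)$ is a connected smooth submanifold of dimension $n-k$ through $f$, hence contained in the irreducible component $(O_f\cap Y)^{irr}$. Comparing dimensions yields $n-k\leq\dim(O_f\cap Y)^{irr}=d$, that is $k\geq n-d=\mathfrak{m}(Y)$, which completes the argument.
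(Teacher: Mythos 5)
Your proof is correct and takes essentially the same route as the paper's: both directions rest on Lemma \ref{stupide} together with a local holomorphic section of the submersion $\mathrm{PGL}(3;\mathbb{C}) \to O_f$, with $M$ obtained by composing that section with a local parametrization of $O_f \cap Y$ near $f$. The only differences are presentational: you make explicit the principal-bundle structure of the orbit map and the containment of $\gamma(\Omega)$ in the component $(O_f \cap Y)^{irr}$, points the paper leaves implicit.
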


\begin{proof}
\noindent Let $\gamma$ and $M$ satisfying the hypotheses of the proposition. Then the image of $\gamma$ near $f$ is a submanifold of $Y$ of dimension $n-k,$ which is included in the adjoint orbit of $f.$ This proves that $k$ is greater than or equal to $\mathfrak{m}(Y).$ To get the inequality in the opposite direction, we choose a generic transformation $f$ in $Y.$ By Lemma \ref{stupide}, the intersection $Z$ of the adjoint orbit of $f$ with $Y$ is smooth near $f.$ Since the orbit map from $\mathrm{PGL}(3; \mathbb{C})$ to $O_{f}$ is a holomorphic submersion, we can find locally around $f$ a holomorphic section $\tau \colon O_{f} \rightarrow \mathrm{PGL}(3; \mathbb{C})$ such that $\tau(f)=\mathrm{id}.$ Then $\tau(Z)$ is a submanifold of~$\mathrm{PGL}(3; \mathbb{C})$ of dimension $n-\mathfrak{m}(Y)$ passing through the identity. If $\Omega$ is a neighborhood of $0$ in $\mathbb{C}^{n-\mathfrak{m}(Y)}$ and $\gamma \, \colon \Omega \rightarrow Z$ is a local parametrization of $Z$ such that $\gamma(0)=f,$ we define $M \colon \Omega \rightarrow \tau(Z)$ by $M(t)=\tau (\gamma(t\!)).$ We obtain that for all $t$ in~$\Omega,$ $f_{\gamma \,(t \!)}=M(\, t)\, f\,  M(\, t)^{-1},$ this implies that $k$ is smaller than or equal to $\mathfrak{m}(Y).$
\end{proof}

\par\medskip

\begin{eg}\label{winner}
For each couple of integers $(n,k)$ with $n \geq 3$ and $k \geq 2,$ let us compute the number of parameters of the family $(f_a)$ of birational maps given in (\ref{bedfordkk}). We use the notations introduced in Theorem \ref{thmbk}.

\par\medskip
\noindent For a generic point $a$ in $\mathbb{C}^{\frac{n-3}{2}},$ let $\gamma\,  \colon \Omega \rightarrow S$ and $M \colon \Omega \rightarrow \mathrm{PGL}(3; \mathbb{C})$ be two holomorphic maps satisfying the assumptions of Proposition \ref{critere0}. Then for all $t$ in $\Omega,$ $f_{\gamma \,(t \!)}=M(\, t)\, f_{a}\, M(\, t)^{-1}\,.$ If $P=(0:0:1),$ $Q=(0:1:0),$ $\Delta=\{x=0\}$ and~$\Delta'=\{z=0\},$ then for any $b$ in $\mathbb{C}^{\frac{n-3}{2}},$ $\mathrm{Ind}\,f_b=\{Q\},$ $\mathrm{Exc}\,f_b=\Delta',$ $f_b(\Delta')=P,$ and $\Delta$ is the only invariant line under~$f_b.$ This implies that for all $t$ in $\Omega,$ the automorphism $M(t\!)$ of $\mathbb{P}^2$ fixes $P$ and $Q,$ and leaves invariant the two lines~$\Delta$ and $\Delta'.$ Thus we can write $M(\, t)=
\left[\begin{array}{ccc}a_t & 0 & 0\\
b_t & c_t & 0 \\
0 & 0 & 1
\end{array}\right]$ where $a,$ $b$ and $c$ are holomorphic functions on $\Omega.$ If we put this expression in the equality $f_{\gamma \,(t \!)}=M(\, t)\, f_{a}\, M(\, t)^{-1},$ we obtain easily that for all $t$ in $\Omega,$ $a_t=c_t=1$ and~$b_t=0.$ This proves that the family $(f_a)$ is generically effective.
\end{eg}

\section{Birational maps whose exceptional locus is a line, I}\label{casgal}

\noindent For any integer $n$ greater than or equal to $3,$ let us consider the birational map $\Phi_n$ defined by $\Phi_n=(xz^{n-1}+y^n:yz^{n-1}:z^n).$ If $P=(1:0:0)$ and $\Delta=\{z=0\},$ then $\mathrm{Ind}\,\Phi_n=\{P\}$ and $\mathrm{Exc}\,\Phi_n=\Delta.$ Besides, $\Phi_n(\Delta)=P.$

\par\medskip

\noindent In this section we construct for every integer $n \geq 3$ two ordered sets of infinitely near points of the complex projective plane $\widehat{\xi}_1$ and~$\widehat{\xi}_2$ corresponding to the minimal desingularization of $\Phi_n.$
Then we give theoric conditions to produce automorphisms $\varphi$ of~$\mathbb{P}^2$ such that $\varphi \,\Phi_n$ is conjugate to an automorphism on a rational surface obtained from $\mathbb{P}^2$ by successive blowups.

\subsection{First step: description of the sequence of blowups}\label{cons}

\noindent We start by the description of two points $\widehat{\xi}_1$ and $\widehat{\xi}_2$ infinitely near $P$ of length $2n-1$ corresponding to the minimal desingularization of $\Phi_n.$

\medskip

\noindent \textit{Convention}: if $\mathcal{D}$ (resp. $\mathcal{D}_i$) is a curve on a surface $X,$ we will denote by $\mathcal{D}_1$ (resp. $\mathcal{D}_{i+1}$) the strict transform of  this curve in $X$ blown up at a point.

\medskip

\noindent Let us blow up $P$ in the domain and in the range; set $y=u_1,$ $z=u_1v_1$ then $\Delta_1=\{v_1=0\}$ and the exceptional divisor $\mathrm{E}$ is given by $\{u_1=0\}.$ We can also set $y=r_1s_1,$ $z=s_1;$ in these coordinates $\mathrm{E}= \{s_1=0\}.$ We get $$\Phi_n\colon(u_1,v_1)\to(u_1,u_1v_1)_{(y,z)}\to\big(v_1^{n-1}+u_1:u_1v_1^{n-1}:u_1v_1^n\big)=\left( \frac{u_1v_1^{n-1}}{v_1^{n-1}+u_1},\frac{u_1v_1^n}{v_1^{n-1}+u_1}\right)_{(y,z)}\to\left(\frac{u_1v_1^{n-1}}{v_1^{n-1}+u_1},v_1\right)_{(u_1,v_1)};$$ hence $\mathrm{E}$ is fixed, $\Delta_1$ is blown down to $P_1=(0,0)_{(u_1,v_1)}=\mathrm{E}\cap\Delta_1$ and $P_1$ is a point of indeterminacy. One also can see that $$\Phi_n\colon(r_1,s_1)\to(r_1s_1,s_1)_{(y,z)}\to\big(1+r_1^ns_1:r_1s_1:s_1\big)\to\left(\frac{r_1s_1} {1+r_1^ns_1},\frac{s_1}{1+r_1^ns_1}\right)_{(y,z)}\to\left(r_1,\frac{s_1}{1+r_1^ns_1}\right)_{(r_1,s_1)}.$$

\noindent Then we blow up $P_1$ in the domain and in the range. Set $u_1=u_2,$ $v_1=u_2v_2$ so the exceptional divisor $\mathrm{F}$ is given by $\{u_2=0\}$ and~$\Delta_2$ by $\{v_2=0\}.$ There is an other system of coordinates $(r_2,s_2)$ with $u_1=r_2s_2,$ $v_1=s_2;$ in this system, $\mathrm{F}=\{s_2 =0\}$ and~$\mathrm{E}_1=\{r_2=0\}.$ On the one hand \begin{align*}
&\Phi_n\colon(u_2,v_2)\to(u_2,u_2v_2)_{(u_1,v_1)}\to\big(u_2^{n-2}v_2^{n-1}+1:u_2^{n-1}v_2^{n-1}:u_2^nv_2^n\big)=\left(\frac{u_2^{n-1}v_2^{n-1}}{u_2^{n-2}v_2^{n-1}+1},\frac{u_2^nv_2^n}{u_2^{n-2}v_2^{n-1}+1}\right)_{(y,z)}\\
&\hspace{1cm}\to\left(\frac{u_2^{n-1}v_2^{n-1}}{u_2^{n-2}v_2^{n-1}+1},u_2v_2\right)_{(u_1,v_1)}\to\left(\frac{u_2^{n-2}v_2^{n-2}}{u_2^{n-2}v_2^{n-1}+1},u_2v_2\right)_{(r_2,s_2)};
\end{align*}
\noindent on the other hand
\begin{align*}
&\Phi_n\colon(r_2,s_2)\to(r_2s_2,s_2)_{(u_1, v_1)}\to\big(s_2^{n-2}+r_2:r_2s_2^{n-1}:r_2s_2^n\big)\to\left(\frac{r_2s_2^{n-1}}{s_2^{n-2}+r_2},\frac{r_2s_2^n}{s_2^{n-2}+r_2}\right)_{(y,z)}\\
&\hspace{1cm}\to\left(\frac{r_2s_2^{n-1}}{s_2^{n-2}+r_2},s_2\right)_{(u_1,v_1)}\to\left(\frac{r_2s_2^{n-2}}{s_2^{n-2}+r_2},s_2\right)_{(r_2,s_2)}.
\end{align*}

\noindent So $A_1=\mathrm{E}_1\cap\mathrm{F}=(0,0)_{(r_2,s_2)}$ is a point of indeterminacy and $\mathrm{F}$ is blown down to $A_1.$

\par \medskip

\noindent Moreover $$\Phi_n\colon(y,z)\to(z^{n-1}+y^n:yz^{n-1}:z^n)\to\left(\frac{yz^{n-1}}{y^n+z^{n-1}},\frac{z^n}{y^n+z^{n-1}}\right)_{(y,z)}\to\left(\frac{yz^{n-1}}{y^n+z^{n-1}},\frac{z}{y}\right)_{(u_1,v_1)}\to\left(\frac{y^2z^{n-2}}{y^n+z^{n-1}},\frac{z}{y}\right)_{(r_2,s_2)};$$

\noindent hence $\Delta_2$ is blown down to $A_1.$

\medskip

\noindent The $(n-3)$ next steps are of the same type, so we will write it one time with some indice $k,$ $1\leq k\leq n-3.$

\noindent We will blow up $A_k=\mathrm{E}_k\cap\mathrm{G}^{k-1}=v(0,0)_{(r_{k+1},s_{k+1})}$ in the domain and in the range. Set
\begin{align*}
&r_{k+1}=u_{k+2}, &&s_{k+1}=u_{k+2}v_{k+2} &&\& &&r_{k+1}=r_{k+2}s_{k+2}, &&s_{k+1}=s_{k+2}.
\end{align*}

\noindent Let us remark that $(u_{k+2},v_{k+2})$ (resp. $(r_{k+2},s_{k+2})$) is a system of coordinates in which the exceptional divisor $\mathrm{G}^k$ is given by $\mathrm{G}^k=\{u_{k+2}=0\}$ and $\mathrm{G}_1^{k-1}=\{v_{k+2}=0\}$ (resp. $\mathrm{G}^k=\{s_{k+2}=0\}$ and  $\mathrm{E}_{k+1}=\{r_{k+2}=0\}$). We have
\begin{align*}
&\Phi_n\colon(u_{k+2},v_{k+2})\to(u_{k+2},u_{k+2}v_{k+2})_{(r_{k+1},s_{k+1})}\to\big(1+u_{k+2}^{n-k-2}v_{k+2}^{n-k-1}:u_{k+2}^{n-1}v_{k+2}^{n-1}:u_{k+2}^nv_{k+2}^n\big)\\
&\hspace{1cm}\to\left(\frac{u_{k+2}^{n-k-2}v_{k+2}^{n-k-2}}{1+u_{k+2}^{n-k-2}v_{k+2}^{n-k-1}},u_{k+2}v_{k+2}\right)_{(r_{k+2},s_{k+2})}
\end{align*}
\noindent and
\begin{align*}
&\Phi_n\colon(r_{k+2},s_{k+2})\to(r_{k+2}s_{k+2},s_{k+2})_{(r_{k+1},s_{k+1})} \to\Big(r_{k+2}+s_{k+2}^{n-k-2}:r_{k+2}s_{k+2}^{n-1}:r_{k+2}s_{k+2}^{n}\big)\\
&\hspace{1cm}\to\left(\frac{r_{k+2}s_{k+2}^{n-k-2}}{r_{k+2}+s_{k+2}^{n-k-2}},s_{k+2}\right)_{(r_{k+2},s_{k+2})}.
\end{align*}

\noindent So $A_{k+1}=\mathrm{G}^k\cap\mathrm{E}_{k+1}=(0,0)_{(r_{k+2}, s_{k+2})}$ is a point of indeterminacy and $\mathrm{G}^k,$ $\mathrm{G}_1^{k-1}$ are blown down to $A_{k+1}.$ Therefore $$\Phi_n\colon(y,z)\to\big(z^{n-1}+y^n:yz^{n-1}:z^n\big)=\left(\frac{yz^{n-1}}{z^{n-1}+y^n},\frac{z^n}{z^{n-1}+y^n}\right)_{(y,z)}\to\left(\frac{y^{k+2}z^{n-k-2}}{z^{n-1}+y^n},\frac{z}{y}\right)_{(r_{k+2},s_{k+2})}$$

\noindent so $\Delta_{k+2}$ is also blown down to $A_{k+1}.$

\noindent One can remark that $$\Phi_n\colon(u_2,v_2)\to\left(\frac{u_2^{n-2}v_2^{n-2}}{1+u_2^{n-2}v_2^{n-1}},u_2v_2\right)_{(r_2,s_2)}\to\left(\frac{u_2^{n-k-2}v_2^{n-k-2}}{1+u_2^{n-2}v_2^{n-1}},u_2v_2\right)_{(r_{k+2},s_{k+2})},$$

\noindent hence $\mathrm{F}_k$ is blown down to $A_{k+1}.$ Using a similar computation one can verify that $\mathrm{G}_j^{k-j}$ is blown down to $A_{k+1}.$

\medskip

\noindent Let us now blow up $A_{n-2}$ in the domain and in the range. Set $r_{n-1}=u_n,$ $s_{n-1}=u_nv_n,$ and $r_{n-1}=r_ns_n,$ $s_{n-1}=s_n.$ Let us remark that $(u_n,v_n)$ (resp. $(r_n,s_n)$) is a system of coordinates in which the exceptional divisor is given by $\mathrm{G}^{n-2}=\{u_n=0\}$ (resp. $\mathrm{G}^{n-2}=\{s_n=0\}$), and~$\mathrm{G}^{n-3}_1=\{v_n=0\}.$ We compute:
\begin{align*}
&\Phi_n\colon(u_n,v_n)\to(u_n,u_nv_n)_{(r_{n-1},s_{n-1})}\to\big(1+v_n:u_n^{n-1}v_n^{n-1}:u_n^nv_n^n\big)\to\left(\frac{1}{1+v_n},u_nv_n\right)_{(r_n,s_n)}
\end{align*}
\noindent and
\begin{align*}
&\Phi_n\colon(r_n,s_n)\to(r_ns_n,s_n)_{(r_{n-1},s_{n-1})} \to\big(1+r_n:r_ns_n^{n-1}:r_ns_n^n\big)\to\left(\frac{r_n}{1+r_n},s_n\right)_{(r_n,s_n)}.
\end{align*}

\noindent This implies that $\mathrm{G}^{n-2}$ is fixed, $\mathrm{G}^{n-3}_1$ is blown down to the point $S=(1,0)_{(r_n,s_n)}$ of $\mathrm{G}^{n-2}$ and the point $T=(-1,0)_{(r_n,s_n)}$ of $\mathrm{G}^{n-2}$ is a point of indeterminacy.

 \medskip

\noindent On the one hand $$\Phi_n\colon(y,z)\to\left(\frac{y^{n-1}z}{z^{n-1}+y^n},\frac{z}{y}\right)_{(r_{n-1},s_{n-1})}\to\left(\frac{y^n}{z^{n-1}+y^n},\frac{z}{y}\right)_{(r_n,s_n)}$$

\noindent so $\Delta_n$ is blown down to $S;$ on the other hand $$\Phi_n\colon(u_2,v_2)\to\left(\frac{u_2^{n-2}v_2^{n-2}}{1+u_2^{n-2}v_2^{n-1}},u_2v_2\right)_{(r_2,s_2)}\to\left(\frac{u_2v_2}{1+u_2^{n-2}v_2^{n-1}},u_2v_2\right)_{(r_{n-1},s_{n-1})}\to\left(\frac{1}{1+u_2^{n-2}v_2^{n-1}},u_2v_2\right)_{(r_n,s_n)},$$

\noindent hence $\mathrm{F}_{n-2}$ is blown down to $S.$

\medskip

\noindent Now we blow up $T$ in the domain and $S$ in the range

\begin{align*}
&\left\{\begin{array}{ll} r_n=u_{n+1}-1\\ s_n=u_{n+1}v_{n+1}\end{array}\right. && \begin{array}{ll} \mathrm{H}=\{u_{n+1}=0\}\\ \end{array} && \hspace{2cm} &&\left\{\begin{array}{ll} r_n=a_{n+1}+1\\ s_n=a_{n+1}b_{n+1}\end{array}\right. && \begin{array}{ll} \mathrm{K}=\{a_{n+1} =0\}\\ \end{array}
\end{align*}

\begin{align*}
&\left\{\begin{array}{ll} r_n=r_{n+1}s_{n+1}-1\\ s_n=s_{n+1}\end{array}\right. && \begin{array}{ll} \mathrm{H}=\{s_{n+1}=0\}\\ \end{array} && \hspace{1cm} &&\left\{\begin{array}{ll} r_n=c_{n+1} d_{n+1}+1\\ s_n=d_{n+1}\end{array}\right. && \begin{array}{ll} \mathrm{K}=\{d_{n+1} =0\}\\ \end{array}
\end{align*}

\noindent We obtain
\begin{align*}
&\Phi_n\colon(u_{n+1},v_{n+1})\to(u_{n+1}-1,u_{n+1}v_{n+1})_{(r_n,s_n)}\to\big(1:(u_{n+1}-1)u_{n+1}^{n-2}v_{n+1}^{n-1}:(u_{n+1}-1)u_{n+1}^{n-1}v_{n+1}^n\big)\\
&\hspace{1cm}=\big((u_{n+1}-1)u_{n+1}^{n-2}v_{n+1}^{n-1},(u_{n+1}-1)u_{n+1}^{n-1}v_{n+1}^n\big)_{(y,z)}\to\big((u_{n+1}-1)u_{n+1}^{n-2}v_{n+1}^{n-1},u_{n+1}v_{n+1}\big)_{(u_1,v_1)}\\
&\hspace{1cm}\to\big((u_{n+1}-1)u_{n+1}^{n-3}v_{n+1}^{n-2},u_{n+1}v_{n+1}\big)_{(r_2,s_2)}\to\ldots\to\big((u_{n+1}-1)v_{n+1},u_{n+1}v_{n+1}\big)_{(r_{n-1},s_{n-1})}
\end{align*}

\noindent and

\begin{align*}
&\Phi_n\colon(r_{n+1},s_{n+1})\to(r_{n+1}s_{n+1}-1,s_{n+1})_{(r_n,s_n)}\to\big(r_{n+1}:(r_{n+1}s_{n+1}-1) s_{n+1}^{n-2}:(r_{n+1}s_{n+1}-1)s_{n+1}^{n-1}\big)\\
&\hspace{1cm}=\left(\frac{(r_{n+1}s_{n+1}-1) s_{n+1}^{n-2}}{r_{n+1}},\frac{(r_{n+1}s_{n+1}-1)s_{n+1}^{n-1}} {r_{n+1}}\right)_{(y,z)}\to\left(\frac{(r_{n+1}s_{n+1}-1)s_{n+1}^{n-2}}{r_{n+1}},s_{n+1} \right)_{(u_1,v_1)}\\
&\hspace{1cm}\to\left(\frac{(r_{n+1}s_{n+1}-1)s_{n+1}^{n-3}}{r_{n+1}},s_{n+1} \right)_{(r_2,s_2)}\to\ldots\to\left(\frac{r_{n+1}s_{n+1}-1}{r_{n+1}},s_{n+1} \right)_{(r_{n-1},s_{n-1})}.
\end{align*}

\noindent Thus $\mathrm{H}$ is sent on $\mathrm{G}_2^{n-3}$ and $B_1=(0,0)_{(r_{n+1},s_{n+1})}$ is a point of indeterminacy. Moreover,

$$\Phi_n\colon(u_n,v_n)\to\left(\frac{1}{1+v_n},u_nv_n\right)_{(r_n,s_n)}\to\left( -\frac{v_n}{1+v_n},-u_n(1+v_n)\right)_{(a_{n+1},b_{n+1})},$$ $$\Phi_n\colon(y,z)\to\left(\frac{y^n}{z^{n-1} +y^n},\frac{z}{y}\right)_{(r_n,s_n)}\to\left(-\frac{yz^{n-2}}{z^{n-1}+y^n},\frac{z}{y}\right)_{(c_{n+1},d_{n+1})} $$ \noindent and $$\Phi_n\colon(u_2,v_2)\to\left(\frac{1}{1+u_2^{n-2}v_2^{n-1}}, u_2v_2\right)_{(r_n,s_n)}\to\left(-\frac{u_2^{n-3}v_2^{n-2}}{1+u_2^{n-2}v_2^{n-1}},u_2v_2\right)_{(c_{n+1},d_{n+1})} .$$ Therefore $\mathrm{G}_2^{n-3}$ is sent on $\mathrm{K}$ and $\Delta_{n+1},$ $\mathrm{F}_{n-1}$ are blown down to $C_1=(0,0)_{(c_{n+1},d_{n+1})}.$

\medskip

\noindent The $(n-3)$ following steps are the same, so we will write it one time with some indice $\ell,$ $1\leq\ell\leq n-3.$

\noindent We blow up $B_\ell=(0,0)_{(r_{n+\ell},s_{n+\ell})}$ in the domain and $C_\ell=(0,0)_{ (c_{n+\ell},d_{n+\ell})}$ in the range

\begin{align*}
&\left\{\begin{array}{ll} r_{n+\ell}=u_{n+\ell+1}\\ s_{n+\ell}=u_{n+\ell+1}v_{n+\ell+1}\end{array} \right. && \begin{array}{ll} \mathrm{L}^\ell=\{u_{n+\ell+1}=0\}\\ \end{array} && \hspace{1cm} &&\left\{\begin{array}{ll} c_{n+\ell}=a_{n+\ell+1}\\ d_{n+\ell}=a_{n+\ell+1}b_{n+\ell+1}\end{array} \right. && \begin{array}{ll} \mathrm{M}^\ell=\{a_{n+\ell+1} =0\}\\ \end{array}
\end{align*}

\begin{align*}
&\left\{\begin{array}{ll} r_{n+\ell}=r_{n+\ell+1}s_{n+\ell+1}\\ s_{n+\ell}=s_{n+\ell+1}\end{array} \right. && \begin{array}{ll} \mathrm{L}^\ell=\{s_{n+\ell+1}=0\}\\ \end{array} && \hspace{1cm} &&\left\{\begin{array}{ll} c_{n+\ell}=c_{n+\ell+1} d_{n+\ell+1}\\ d_{n+\ell}=d_{n+\ell+1} \end{array}\right. && \begin{array}{ll} \mathrm{M}^\ell=\{d_{n+\ell+1} =0\}\\ \end{array}
\end{align*}

\noindent On the one hand
\begin{align*}
&\Phi_n\colon(u_{n+\ell+1},v_{n+\ell+1})\to(u_{n+\ell+1},u_{n+\ell+1}v_{n+\ell+1})_{(r_{n+\ell},s_{n+\ell})}\\
&\hspace{1cm}\to \big(1:(u_{n+\ell+1}^{\ell+1}v_{n+\ell+1}^\ell-1) u_{n+\ell+1}^{n-\ell-2}v_{n+\ell+1}^{n-\ell-1}:(u_{n+\ell+1}^{\ell+1} v_{n+\ell+1}^\ell-1) u_{n+\ell+1}^{n-\ell-1}v_{n+\ell+1}^{n-\ell}\big)\\
&\hspace{1cm}\to\big((u_{n+\ell+1}^{\ell+1}v_{n+\ell+1}^\ell-1) v_{n+\ell+1},u_{n+\ell+1}v_{n+\ell+1}\big)_{(r_{n- 1-\ell},s_{n-1-\ell})}
\end{align*}

\noindent and on the other hand
\begin{align*}
&\Phi_n\colon(r_{n+\ell+1},s_{n+\ell+1})\to(r_{n+\ell+1}s_{n+\ell+1},s_{n+\ell+1})_{(r_{n+\ell},s_{n+\ell})}\\
&\hspace{1cm}\to\big(r_{n+\ell+1}:(r_{n+\ell+1}s_{n+\ell+1}^{\ell+1}-1) s_{n+\ell+1}^{n-\ell-2}:(r_{n+\ell+1}s_{n+\ell+1}^{\ell+1}-1) s_{n+\ell+1}^{n-\ell-1}\big)\\
&\hspace{1cm}\to\left(\frac{r_{n+\ell+1}s_{n+\ell+1}^{\ell+1}-1}{r_{n+\ell+1}},s_{n+\ell+1}\right)_{(r_{n- 1-\ell},s_{n-1-\ell})}.
\end{align*}

\noindent So $B_{\ell+1}=(0,0)_{(r_{n+\ell+1},s_{n+\ell+1})}$ is a point of indeterminacy, $\mathrm{L}^\ell$ is sent on $\mathrm{G}^{n-3-\ell}_{2\ell+2}$ if $1\leq \ell\leq n-4$ and $\mathrm{L}^{n-3}$ is sent on~$\mathrm{F}_{2n-4}.$ Remark that $B_{\ell+1}$ is on $\mathrm{L}^\ell$ but not on $\mathrm{L}^{\ell-1}_1.$ Besides one can verify that
\begin{align*}
&\Phi_n\colon(y,z)\to\left(-\frac{y^{\ell+1}z^{n-\ell-2}}{z^{n-1}+y^n}, \frac{z}{y}\right)_{(c_{n+\ell+1},d_{n+\ell+1})}&& \text{and that} &&\Phi_n\colon(u_2,v_2)\to\left(-\frac{u_2^{n-\ell -3}v_2^{n-\ell-2}}{1+u_2^{n-2}v_2^{n-1}},u_2v_2\right)_{(c_{n+\ell+1},d_{n+\ell+1})};
\end{align*}

\noindent thus $\Delta_{n+\ell+1}$ and $\mathrm{F}_{n+\ell-1}$ are blown down to $C_{\ell+1}=(0,0)_{( c_{n+\ell+1},d_{n+\ell+1})}$ for every $\ell<n-3.$ The situation is different for~$\ell=n-3:$ whereas $\Delta_{2n-2}$ is still blown down to $C_{n-2}=(0,0)_{(c_{2n-2},d_{2n-2})},$ the divisor $\mathrm{F}_{2n-4}$ is sent on $\mathrm{M}^{n-3}.$

\noindent One can also note that
\begin{align*} &(u_{k+2},v_{k+2})\to\left(\frac{u_{k+2}^{n-k-2}v_{k+2}^{n-k-2}}{1+u_{k+2}^{n-k-2}v_{k+2}^{n-k-1}},u_{k+2}v_{k+2}\right)_{(r_{k+2},s_{k+2})}\to\ldots\to\left(\frac{1}{1+u_{k+2}^{n-k-2}v_{k+2}^{n-k-1}},u_{k+2}v_{k+2}\right)_{(r_n,s_n)}\\
&\hspace{1cm}\to\left(-\frac{u_{k+2}^{n-k-3}v_{k+2}^{n-k-2}}{1+u_{k+2}^{n-k-2}v_{k+2}^{n-k-1}},u_{k+2}v_{k+2}\right)_{(c_{n+1},d_{n+1})}\to\ldots\to\left(-\frac{v_{k+2}}{1+u_{k+2}^{n-k-2}v_{k+2}^{n-k-1}},u_{k+2}v_{k+2}\right)_{(c_{2n-k-2},d_{2n-k-2})}
\end{align*}
\noindent so $\mathrm{G}^k_{2n-4-k}$ is sent on $\mathrm{M}^{n-k-3}_k$ for all $1\leq k\leq n-4.$

\medskip

\noindent Finally we blow up $B_{n-2}$ in the domain and $C_{n-2}$ in the range.

\begin{align*}
&\left\{\begin{array}{ll} r_{2n-2}=u_{2n-1}\\ s_{2n-2}=u_{2n-1}v_{2n-1}\end{array} \right. && \begin{array}{ll} \mathrm{L}^{n-2}=\{u_{2n-1}=0\}\\ \end{array} && \hspace{1cm} &&\left\{\begin{array}{ll} c_{2n-2}=a_{2n-1}\\ d_{2n-2}=a_{2n-1}b_{2n-1}\end{array} \right. && \begin{array}{ll} \mathrm{M}^{n-2}=\{a_{2n-1} =0\}\\ \end{array}
\end{align*}

\begin{align*}
&\left\{\begin{array}{ll} r_{2n-2}=r_{2n-1}s_{2n-1}\\ s_{2n-2}=s_{2n-1}\end{array} \right. && \begin{array}{ll} \mathrm{L}^{n-2}=\{s_{2n-1}=0\}\\ \end{array} && \hspace{1cm} &&\left\{\begin{array}{ll} c_{2n-2}=c_{2n-1} d_{2n-1}\\ d_{2n-2}=d_{2n-1} \end{array}\right. && \begin{array}{ll} \mathrm{M}^{n-2}=\{d_{n-2}=0\}\\ \end{array}
\end{align*}

\noindent This yields
\begin{align*}
&\Phi_n\colon(u_{2n-1},v_{2n-1})\to(u_{2n-1},u_{2n-1}v_{2n-1})_{(r_{2n-2},s_{2n-2})}\to \big(1:(u_{2n-1}^{n-1}v_{2n-1}^{n-2}-1) v_{2n-1}:(u_{2n-1}^{n-1} v_{2n-1}^{n-2}-1) u_{2n-1}v_{2n-1}^2\big)
\end{align*}

\noindent and
\begin{align*}
&\Phi_n\colon(r_{2n-1},s_{2n-1})\to(r_{2n-1}s_{2n-1},s_{2n-1})_{(r_{2n-2},s_{2n-2})}\to\big(r_{2n-1}:r_{2n-1}s_{2n-1}^{n-1}-1:(r_{2n-1}s_{2n-1}^{n-1}-1) s_{2n-1}\big).
\end{align*}

\noindent Thus there is no point of indeterminacy and $\mathrm{L}^{n-2}$ is sent on $\Delta_{2n-1}.$

Furthermore,
$$(y,z)\to\left(-\frac{y^{n-1}}{z^{n-1}+y^n}, \frac{z}{y}\right)_{(c_{2n-1},d_{2n-1})}$$

\noindent so $\Delta_{2n-1}$ is sent on $\mathrm{M}^{n-2}.$

\medskip

\noindent All these computations yield the following result:

\begin{pro} \label{degqcq}
{\sl Let $\widehat{\xi}_1$ (resp. $\widehat{\xi}_2$) denote the point infinitely near $P$ obtained by blowing up $P,$ $P_1,$ $A_1,$ $\ldots,$ $A_{n-2},$ $T,$ $B_1,$ $\ldots,$ $B_{n-3}$ and $B_{n-2}$ (resp. $P,$ $P_1,$ $A_1,$ $\ldots,$ $A_{n-2},$ $S,$ $C_1,$ $\ldots,$ $C_{n-3}$ and $C_{n-2}$).
Then $\widehat{\xi}_1$  and $\widehat{\xi}_2$ correspond to the minimal desingularization of $\Phi_n,$ so that $\Phi_n$ induces an isomorphism between $\mathrm{Bl}_{\widehat{\xi}_1}\mathbb{P}^2$ and $\mathrm{Bl}_{\widehat{\xi}_2}\mathbb{P}^2.$ The different components are swapped as follows:
\begin{align*}
&&\Delta\to\mathrm{M}^{n-2}, &&\mathrm{E}\to\mathrm{E}, && \mathrm{F}\to\mathrm{M}^{n-3}, &&\mathrm{G}^{n-3}\to\mathrm{K}, && \mathrm{G}^{n-2}\to\mathrm{G}^{n-2}, &&\mathrm{H}\to\mathrm{G}^{n-3},   && \mathrm{L}^{n-3}\to \mathrm{F},  &&\mathrm{L}^{n-2}\to\Delta,
\end{align*}
\begin{align*}
&\mathrm{G}^k\to\mathrm{M}^{n-k-3}\,\, \text{ for } 1\leq k\leq n-4,
&&\mathrm{L}^\ell\to \mathrm{G}^{n-3-\ell}\,\, \text{ for } 1\leq \ell\leq n-4.
\end{align*}}
\end{pro}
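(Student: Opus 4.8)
The plan is to recognize that the explicit local computations carried out in \S \ref{cons} already contain all the information needed; the proof consists in organizing them into an argument for minimality, then for the isomorphism property, and finally in reading off the table of correspondences.

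First I would establish minimality. Recall from \S \ref{desing} that a point of indeterminacy of a rational map must be blown up in \emph{every} desingularization, since a regular map has empty indeterminacy locus; consequently, if one resolves a map by successively blowing up points which are each an indeterminacy point of the current lift, and the process terminates with a regular map, the resulting surface is the minimal desingularization. I argue by induction on the number of blowups. Initially $\mathrm{Ind}\,\Phi_n=\{P\}$, so $P$ is forced; the first computation shows that on $\mathrm{Bl}_P\mathbb{P}^2$ the lift of $\Phi_n$ has the single point of indeterminacy $P_1=\mathrm{E}\cap\Delta_1$, which is therefore forced; and so on. Each subsequent computation --- for $P_1$, for the self-similar steps producing $A_1,\ldots,A_{n-2}$ (written once with the generic index $k$), for $T$, and for the self-similar steps producing $B_1,\ldots,B_{n-2}$ (written once with the generic index $\ell$) --- exhibits the unique point of indeterminacy of the current lift. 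Hence no superfluous blowup occurs, and $\mathrm{Bl}_{\widehat{\xi}_1}\mathbb{P}^2$ is the minimal desingularization of $\Phi_n$; reading the very same computations in the range identifies $\mathrm{Bl}_{\widehat{\xi}_2}\mathbb{P}^2$ with the minimal desingularization of $\Phi_n^{-1}$.

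Next I would verify the isomorphism property. Taken together, the computations show that after the final blowups of $B_{n-2}$ and $C_{n-2}$ the lift $\widetilde{\Phi_n}\colon\mathrm{Bl}_{\widehat{\xi}_1}\mathbb{P}^2\to\mathrm{Bl}_{\widehat{\xi}_2}\mathbb{P}^2$ has no remaining point of indeterminacy, so it is a birational morphism. Both surfaces are obtained from $\mathbb{P}^2$ by $2n-1$ blowups and thus share the Picard rank $2n$. By the factorization recalled in \S \ref{desing}, the birational morphism $\widetilde{\Phi_n}$ is a composition of blowups followed by an isomorphism; as each blowup strictly increases the Picard rank of the source relative to the target, the equality of Picard ranks forces the number of blowups to be zero, i.e. $\widetilde{\Phi_n}$ is an isomorphism. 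This is independently confirmed by the computations, which send each divisor $\Delta$, $\mathrm{E}$, $\mathrm{F}$, $\mathrm{G}^k$, $\mathrm{G}^{n-2}$, $\mathrm{H}$, $\mathrm{L}^\ell$ onto a curve rather than a point.

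Finally, the swapping table is assembled by collecting, from the relevant steps of \S \ref{cons}, the image of each divisor: for instance $\Delta\to\mathrm{M}^{n-2}$ and $\mathrm{L}^{n-2}\to\Delta$ from the last blowup, $\mathrm{E}\to\mathrm{E}$ and $\mathrm{G}^{n-2}\to\mathrm{G}^{n-2}$ from the fixed exceptional divisors, and the two families $\mathrm{G}^k\to\mathrm{M}^{n-k-3}$, $\mathrm{L}^\ell\to\mathrm{G}^{n-3-\ell}$ from the generic-index computations. I expect the main obstacle to be not any isolated hard step but the sustained bookkeeping of the infinitely near structure: keeping track of the chart in which each center lives, checking that the self-similar inductive steps genuinely close up for all admissible indices, and confirming that the two verifications above --- that every center is forced, and that no curve is contracted --- hold uniformly. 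This is precisely what upgrades the raw local charts into the statement of the proposition.
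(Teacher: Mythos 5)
Your proposal is correct and takes essentially the same route as the paper: the paper's own proof of Proposition \ref{degqcq} consists precisely of the chart-by-chart blowup computations of \S\ref{cons} (the proposition is introduced by ``All these computations yield the following result''), which is exactly the material you organize. The only difference is that you make explicit the standard glue the paper leaves implicit --- namely that successively blowing up only indeterminacy points of the current lifts (resp.\ images of contracted curves, on the range side) forces minimality, and that the resulting regular birational lift between surfaces of equal Picard rank (equivalently, contracting no curve) is an isomorphism --- and both of these arguments are sound and consistent with the facts recalled in \S\ref{desing}.
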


\noindent The sequence of blowups corresponding to the infinitely near points $\widehat{\xi}_1$ and $\widehat{\xi}_2$ already appeared in \cite{BK3}.

\subsection{Second step: gluing conditions}

\noindent The gluing conditions reduce to the following problem: if $u$ is a germ of biholomorphism in a neighborhood of $P,$ find the conditions on $u$ in order that $u(\widehat{\xi}_2)=\widehat{\xi}_1.$

\medskip

\noindent Consider a neighborhood of $(0,0)$ in $\mathbb{C}^2$ with the coordinates $\eta_1,$ $\mu_1.$ For every integer $d\geq 1,$ we introduce an infinitely near point $\widehat{\Omega}_d$ of length $d$ centered at $(0,0)$ by blowing up successively $\omega_1,$ $\ldots,$ $\omega_d$, where
$\omega_i=(0,0)_{(\eta_i,\mu_i)}$ and the coordina\-tes~$(\eta_i,\mu_i)$ are given by the formulae $\eta_i=\eta_{i+1} \mu_{i+1},$ $\mu_i=\mu_{i+1}.$

\noindent Let $g(\eta_1,\mu_1)=\left(\displaystyle\sum_{(i,j)\in\mathbb{N}^2}\alpha_{i,j}\eta_1^i\mu_1^j,\displaystyle\sum_{(i,j)\in\mathbb{N}^2}\beta_{i,j}\eta_1^i\mu_1^j\right)_{(\eta_1,\mu_1)}$ be a germ of biholomorphism at $(0,0)_{(\eta_1,\mu_1)}.$ If $d$ is a positive integer, we define the subset $I_d$ of $\mathbb{N}^2$  by $I_d=\{(0,0),\, (0,1), \, \ldots, \, (0,d-1) \}.$

\begin{lem}\label{glucond}
{\sl If $d$ is in $\mathbb{N}^*,$ $g$ can be lifted to a biholomorphism $\widetilde{g}$ in a neighborhood of the exceptional components in $\mathrm{Bl}_{\widehat{\Omega}_d}\mathbb{C}^2$ if and only if $\alpha_{0,0}=\beta_{0,0}=0$ and $\alpha_{0,1}=\ldots=\alpha_{0,d-1}=0.$
If these conditions are satisfied, $\beta_{0,1}\not=0$ and $\widetilde{g}$ is given in the coordinates $(\eta_{d+1},\mu_{d+1})$ by the formula
\begin{align*}
&\widetilde{g}(\eta_{d+1},\mu_{d+1})=\left(\frac{\displaystyle\sum_{(i,j)\not\in I_d}\alpha_{i,j}\eta_{d+1}^i\mu_{d+1}^{d(i-1)+j}}{\displaystyle\sum_{(i,j)\not\in I_1}\beta_{i,j}\eta_{d+1}^i\mu_{d+1}^{di+j-1}},\displaystyle\sum_{(i,j)\not\in I_1}\beta_{i,j}\eta_{d+1}^i\mu_{d+1}^{di+j}\right)_{(\eta_{d+1},\mu_{d+1})}, && \vert\mu_{d+1}\vert<\varepsilon,\,\vert\eta_{d+1}\mu_{d+1}\vert< \varepsilon
\end{align*}}
for $\epsilon$ sufficiently small.
\end{lem}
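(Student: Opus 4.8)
The plan is to argue by induction on the length $d$, lifting $g$ through the tower of blow-ups one center at a time. Throughout I write $\widetilde{g}^{(m)}$ for the lift of $g$ to $\mathrm{Bl}_{\widehat{\Omega}_m}\mathbb{C}^2$, read in the chart $(\eta_{m+1},\mu_{m+1})$. The guiding principle is that $g$ lifts to a biholomorphism defined near the whole exceptional chain exactly when it maps the infinitely near point $\widehat{\Omega}_d$ of the source to the infinitely near point $\widehat{\Omega}_d$ of the target; since the chain is linear and $g$ comes from the base, this is equivalent to asking that $g$ fix the first center $\omega_1$, that $\widetilde{g}^{(1)}$ fix $\omega_2$, and so on, i.e. that each successive center $\omega_1,\dots,\omega_d$ be fixed.

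First I would settle the base case $d=1$: a germ fixing the origin, i.e. one with $\alpha_{0,0}=\beta_{0,0}=0$, always lifts to the blow-up of $\omega_1$, and substituting $\eta_1=\eta_2\mu_2$, $\mu_1=\mu_2$ and then dividing both components by their common factor $\mu_2$ yields the stated formula for $m=1$. This computation exhibits the mechanism reused below: the second component of $\widetilde{g}^{(1)}$ is $\mu_2$ times the denominator series of its first component, a relation I would propagate through the induction.

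Next comes the inductive step on the conditions. Assuming $\alpha_{0,0}=\beta_{0,0}=0$ and $\alpha_{0,1}=\cdots=\alpha_{0,d-2}=0$, the lift $\widetilde{g}^{(d-1)}$ exists and fixes $\omega_1,\dots,\omega_{d-1}$. Since $\mathrm{Bl}_{\widehat{\Omega}_d}\mathbb{C}^2$ is obtained from $\mathrm{Bl}_{\widehat{\Omega}_{d-1}}\mathbb{C}^2$ by blowing up $\omega_d=(0,0)_{(\eta_d,\mu_d)}$, the map lifts one further step precisely when $\widetilde{g}^{(d-1)}$ fixes $\omega_d$. Here I would simply evaluate the explicit $(d-1)$-formula at the origin of the $(\eta_d,\mu_d)$-chart: the second component has no constant term, while the first component evaluates to $\alpha_{0,d-1}/\beta_{0,1}$, the only surviving contributions coming from the monomial $(i,j)=(0,d-1)$ in the numerator and $(0,1)$ in the denominator. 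Hence the lift fixes $\omega_d$ iff $\alpha_{0,d-1}=0$, which is exactly the new condition. That $\beta_{0,1}\neq 0$ is then automatic: once $\alpha_{0,1}=0$ one has $\det Dg(0)=\alpha_{1,0}\beta_{0,1}-\alpha_{0,1}\beta_{1,0}=\alpha_{1,0}\beta_{0,1}\neq 0$, forcing $\beta_{0,1}\neq 0$, and this is also what makes the denominator below a unit near $\omega_{d+1}$.

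Finally, to get the closed formula I would exploit the global change of coordinates produced by the tower, namely $\eta_1=\eta_{d+1}\mu_{d+1}^{\,d}$ and $\mu_1=\mu_{d+1}$ together with its analogue in the target, substitute it into $g$, read off the target coordinates, and factor out the common powers of $\mu_{d+1}$. The vanishing conditions are precisely what forces the numerator to be divisible by the expected power of $\mu_{d+1}$ and the denominator to be divisible by $\mu_{d+1}$ with nonzero residual constant term $\beta_{0,1}$; after simplification one reads off the exponents $d(i-1)+j$ and $di+j$ of the statement and the index sets $I_d$, $I_1$ recording which monomials survive. I expect the main obstacle to be exactly this bookkeeping of exponents and divisibility: one must verify that every monomial remaining in the numerator has nonnegative $\mu_{d+1}$-exponent (which is where the restriction $(i,j)\notin I_d$ enters) and correctly track the power of the denominator series produced by the division, all of which hinges on the conditions $\alpha_{0,1}=\cdots=\alpha_{0,d-1}=0$ established in the inductive step.
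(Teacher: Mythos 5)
Your overall strategy --- induction on $d$, lifting through the tower one center at a time --- is exactly the paper's (its entire proof is ``This is straightforward by induction on $d$''), and the structural half of your argument is sound: liftability is equivalent to each successive center being mapped to itself, evaluating the previous lift at the origin of the $(\eta_d,\mu_d)$-chart isolates the coefficient $\alpha_{0,d-1}$, and once $\alpha_{0,1}=0$ the invertibility of $Dg(0)$ forces $\beta_{0,1}\neq 0$. So the ``if and only if'' part of the lemma, which depends only on which monomials vanish, is correctly established by your scheme.

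The step that fails is the last one, precisely the ``bookkeeping of exponents and divisibility'' you deferred: carrying out your own plan does \emph{not} return the printed formula. Writing $G_k=g_k(\eta_{d+1}\mu_{d+1}^d,\mu_{d+1})$, the target coordinates are $\mu'_{d+1}=G_2$ and $\eta'_{d+1}=G_1/G_2^{\,d}$, since $\eta'_1=\eta'_{d+1}(\mu'_{d+1})^d$ forces division by the $d$-th power of $G_2$, not by $G_2$ itself. Factoring $G_1=\mu_{d+1}^d\,N$ and $G_2=\mu_{d+1}D$, where $N$ and $D$ are the two series of the statement, the first component of the lift is $N/D^{\,d}$, not $N/D$: each of the $d$ steps contributes one factor of the denominator, because the relation you isolated in the base case (second component $=\mu\times$ denominator of the first) reproduces itself at every stage. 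Concretely, for $g(\eta_1,\mu_1)=(\eta_1,\mu_1+\mu_1^2)$ and $d=2$, the two-step computation gives $\widetilde g(\eta_3,\mu_3)=\bigl(\eta_3/(1+\mu_3)^2,\,\mu_3(1+\mu_3)\bigr)$, and only this map blows down to $g$; the printed expression $\eta_3/(1+\mu_3)$ blows down to $\bigl(\eta_1(1+\mu_1),\mu_1+\mu_1^2\bigr)\neq g$. In other words, the displayed formula in the lemma is itself off by this power of the denominator (a typo, as one can also see in the proof of Proposition \ref{gluglu}, where applying the lemma with $d=n-1$ produces a denominator series raised to the power $n-1$); the two expressions agree only for $d=1$, which is why your base case looked consistent. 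As written, your proposal asserts that the substitution ``reads off'' the statement, and that assertion is false for every $d\geq 2$; a correct execution should end with the corrected formula $N/D^{\,d}$ (and, incidentally, with the value $\alpha_{0,d-1}/\beta_{0,1}^{\,d-1}$ rather than $\alpha_{0,d-1}/\beta_{0,1}$ in your inductive evaluation --- harmless for the equivalence, but another symptom of propagating the unexponentiated denominator).
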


\begin{proof}
This is straightforward by induction on $d.$
\end{proof}

\noindent Fix $n\geq 3,$ then $\mathrm{Bl}_{\widehat{\xi}_1}\mathbb{C}^2$ can be obtained as follows:
\begin{itemize}
\item blow up $P$ ;

\item blow up $\widehat{\Omega}_{n-1}$ centered at $P_1$ ({\it i.e.} $\eta_1=u_1,$ $\mu_1=v_1$) ;

\item blow up $\widehat{\Omega}_{n-1}$ centered at $T$ ({\it i.e.} $\eta_1=r_n+1,$ $\mu_1=s_n$).
\end{itemize}

\noindent The same holds with $\widehat{\xi}_2,$ the point $T$ being replaced by $S.$

\begin{pro}\label{gluglu}
{\sl Let $u(y,z)=\left(\displaystyle\sum_{(i,j)\in\mathbb{N}^2} m_{i,j}y^iz^j, \displaystyle\sum_{ (i,j)\in\mathbb{N}^2} n_{i,j}y^iz^j\right)$ be a germ of biholomorphism at $P.$

\begin{itemize}
\item If $n=3,$ then $u$ can be lifted to a germ of biholomorphism between $\mathrm{Bl}_{\widehat{\xi}_2}\mathbb{P}^2$ and  $\mathrm{Bl}_{\widehat{\xi}_1}\mathbb{P}^2$ if and only if
\begin{itemize}
\item $m_{0,0}=n_{0,0}=0;$

\item $n_{1,0}=0;$

\item $m_{1,0}^3+n_{0,1}^2=0;$

\item $n_{2,0}=\frac{3m_{0,1}n_{0,1}}{2m_{1,0}}.$
\end{itemize}

\item If $n\geq 4,$ then $u$ can be lifted to a germ of biholomorphism between $\mathrm{Bl}_{\widehat{\xi}_2}\mathbb{P}^2$ and  $\mathrm{Bl}_{\widehat{\xi}_1}\mathbb{P}^2$ if and only if
\begin{itemize}
\item $m_{0,0}=n_{0,0}=0;$

\item $n_{1,0}=0;$

\item $m_{1,0}^n+n_{0,1}^{n-1}=0;$

\item $m_{0,1}=n_{2,0}=0.$
\end{itemize}
\end{itemize}}
\end{pro}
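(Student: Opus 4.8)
The plan is to reduce the single equation $u(\widehat\xi_2)=\widehat\xi_1$ to a chain of applications of Lemma~\ref{glucond}, using the description of $\widehat\xi_1$ and $\widehat\xi_2$ recalled just before the statement. Both configurations are obtained by blowing up $P$, then a chain $\widehat\Omega_{n-1}$ based at $P_1$ (coordinates $\eta_1=u_1,\ \mu_1=v_1$), and finally a chain $\widehat\Omega_{n-1}$ based at $T$, resp.\ $S$ (coordinates $\eta_1=r_n+1,\ \mu_1=s_n$, resp.\ $\eta_1=r_n-1,\ \mu_1=s_n$); they coincide except for this last chain. Hence $u$ lifts to a biholomorphism $\mathrm{Bl}_{\widehat\xi_2}\mathbb{P}^2\to\mathrm{Bl}_{\widehat\xi_1}\mathbb{P}^2$ if and only if its successive lifts (i) fix $P$, (ii) fix $P_1$ and preserve the chain at $P_1$, (iii) send $S$ to $T$, and (iv) carry the chain at $S$ onto the chain at $T$. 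Since Lemma~\ref{glucond} characterises each lifting step by explicit equations, it suffices to run through the four steps and collect the conditions; as that lemma is an equivalence, the resulting list is automatically necessary and sufficient.

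For (i) and (ii) I would lift $u=(Y,Z)$ to the chart $y=u_1,\ z=u_1v_1$, in which $\widetilde u=(Y,\,Z/Y)$. The condition $u(P)=P$ is exactly $m_{0,0}=n_{0,0}=0$, which I impose from now on. Then $Y=u_1\,(m_{1,0}+m_{0,1}v_1+\dots)$ is divisible by $u_1=\eta_1$, so in the notation of Lemma~\ref{glucond} all coefficients $\alpha_{0,j}$ of the first component of $\widetilde u$ vanish; the $\alpha$-conditions of the lemma for the chain $\widehat\Omega_{n-1}$ at $P_1$ are therefore automatic, and the only requirement left is $\widetilde u(P_1)=P_1$, i.e.\ $(Z/Y)(0,0)=n_{1,0}/m_{1,0}=0$, that is $n_{1,0}=0$. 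This gives the first two bullets and shows that the chain at $P_1$ imposes nothing further.

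Next I would push $\widetilde u$ along the chain at $P_1$ into the chart $(r_n,s_n)$ containing $S$ and $T$, by feeding the coefficients of $(Y,\,Z/Y)$ into Lemma~\ref{glucond} with $d=n-1$ (so that $(\eta_n,\mu_n)=(r_n,s_n)$ and $\mathrm{G}^{n-2}=\{s_n=0\}$). On $\mathrm{G}^{n-2}$ only the terms issued from $\alpha_{1,0}=m_{1,0}$ and $\beta_{0,1}=n_{0,1}/m_{1,0}$ survive, and a short induction on the $n-1$ blow-ups of the chain shows that $\widetilde u$ acts on $\mathrm{G}^{n-2}$ by $r_n\mapsto\dfrac{m_{1,0}^{\,n}}{n_{0,1}^{\,n-1}}\,r_n$. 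Requirement (iii), namely $\widetilde u(S)=T$ with $S=(1,0)$ and $T=(-1,0)$, then reads $m_{1,0}^{\,n}/n_{0,1}^{\,n-1}=-1$, which is the third bullet $m_{1,0}^{\,n}+n_{0,1}^{\,n-1}=0$.

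Finally, for (iv) I would recentre $\widetilde u$ at $S$ and $T$ --- setting $\eta_1=r_n-1,\ \mu_1=s_n$ in the source and $\eta_1=r_n+1,\ \mu_1=s_n$ in the target --- and apply Lemma~\ref{glucond} a last time to the chain $\widehat\Omega_{n-1}$ at $S$. The lift exists along this chain exactly when the coefficients $\widetilde\alpha_{0,1},\dots,\widetilde\alpha_{0,n-2}$ of the recentred germ vanish, i.e.\ when the first component of the lift depends on $s_n$ only to order $\ge n-1$ at $r_n=1$. Computing these coefficients requires the expansion of the full lift in the $(r_n,s_n)$ chart up to order $n-2$ in $s_n$, and this is the main obstacle; it is here that the Taylor coefficients $m_{0,1}$ and $n_{2,0}$ enter. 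I expect the vanishing of $\widetilde\alpha_{0,1}$ to produce a single relation between $m_{1,0},m_{0,1},n_{0,1},n_{2,0}$: when $n=3$ the chain at $T$ has length two, so this is the only equation and it should simplify to $n_{2,0}=\dfrac{3\,m_{0,1}\,n_{0,1}}{2\,m_{1,0}}$. For $n\ge4$ the chain is longer, and adjoining $\widetilde\alpha_{0,2}=0$ should force $m_{0,1}=n_{2,0}=0$ separately, the remaining conditions $\widetilde\alpha_{0,j}=0$ for $3\le j\le n-2$ then holding automatically. This case distinction, governed solely by the length of the chain at $T$, accounts for the two different sets of conditions in the statement.
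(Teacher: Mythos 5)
Your decomposition into four lifting steps and your treatment of the first three of them coincide with the paper's own proof, and they are correct: $u(P)=P$ gives $m_{0,0}=n_{0,0}=0$; the first component of $\widetilde u_1=(Y,Z/Y)$ is divisible by $u_1$, so the chain at $P_1$ imposes only $\beta_{0,0}=0$, i.e.\ $n_{1,0}=0$; and the lift does act on $\mathrm{G}^{n-2}$ by $r_n\mapsto (m_{1,0}^n/n_{0,1}^{n-1})\,r_n$, so that $\widetilde u_n(S)=T$ is exactly $m_{1,0}^n+n_{0,1}^{n-1}=0$ (note in passing that the denominator in the displayed formula of Lemma~\ref{glucond} should carry the exponent $d$; your scaling factor is the correct one, not the one the uncorrected formula would give).

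The gap is step (iv), which you never carry out and whose structure you predict incorrectly for $n\ge 5$. In the chart $(r_n,s_n)$ the lift is $\bigl(r_nA^n/B^{n-1},\,s_nB/A\bigr)$, where $A=\sum m_{i,j}r_n^{i+j-1}s_n^{(n-1)(i+j-1)+j}$ and $B=\sum_{(i,j)\neq (1,0)} n_{i,j}r_n^{i+j-1}s_n^{(n-1)(i+j-1)+j-1}$; recentring at $S$ in the source and $T$ in the target and restricting to $\{r_n=1\}$, its first component becomes
\[
\frac{\bigl(m_{1,0}+m_{0,1}\mu_1+O(\mu_1^{\,n-1})\bigr)^{n}}{\bigl(n_{0,1}+n_{2,0}\mu_1^{\,n-2}+O(\mu_1^{\,n-1})\bigr)^{n-1}}+1 ,
\]
the key point being that $n_{2,0}$ carries the $s_n$-weight $(n-1)(2+0-1)+0-1=n-2$, while every coefficient other than $m_{1,0},m_{0,1},n_{0,1},n_{2,0}$ carries weight at least $n-1$. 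Consequently, for $n\ge 4$ the condition $\widetilde\alpha_{0,1}=0$ gives $m_{0,1}=0$, the conditions $\widetilde\alpha_{0,j}=0$ for $2\le j\le n-3$ are then satisfied identically, and it is the \emph{last} condition $\widetilde\alpha_{0,n-2}=0$ that gives $n_{2,0}=0$. This is the opposite of what you assert: for $n\ge 5$, once $m_{0,1}=0$ the coefficient $\widetilde\alpha_{0,2}$ vanishes automatically, so it cannot force $n_{2,0}=0$, and the conditions you declare ``automatic'' ($3\le j\le n-2$) include the one carrying all the content. Run literally, your plan would conclude for $n\ge 5$ that $n_{2,0}$ is unconstrained, i.e.\ it would establish a statement strictly weaker than, and incompatible with, the proposition. (For $n=3$ your outline is consistent, since there $n-2=1$ and $m_{0,1},n_{2,0}$ collide at order one, producing the mixed relation $n_{2,0}=3m_{0,1}n_{0,1}/(2m_{1,0})$; but you do not perform that expansion either.) The weight count above, which pins down at which order each Taylor coefficient of $u$ enters the gluing conditions, is precisely the substance of the paper's proof and is what is missing from yours.
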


\begin{proof}
The first condition is $u(P)=P,$ {\it i.e.} $m_{0,0}=n_{0,0}=0.$ The associated lift $\widetilde{u}_1$ is given by $$\widetilde{u}_1(u_1,v_1) =\left(\displaystyle\sum_{\begin{footnotesize}(i,j)\not\in I_1\end{footnotesize}}m_{i,j}u_1^{i+j}v_1^j, \frac{\displaystyle\sum_{\begin{footnotesize}(i,j)\not\in I_1\end{footnotesize}}n_{i,j}u_1^{i+j-1}v_1^j}{\displaystyle\sum_{\begin{footnotesize}(i,j)\not\in I_1\end{footnotesize}}m_{i,j}u_1^{i+j-1} v_1^j}\right)_{(u_1,v_1)}.$$ We must now verify the gluing conditions of Lemma \ref{glucond} for $g=\widetilde{u}_1$ with $d=n-1.$ This implies only the condition $n_{1,0}=0,$ since $\alpha_{0,j}(\widetilde{u}_1)=0$ for $j\geq 0.$ After blowing up $\widehat{\Omega}_{n-1}$ we get $$\widetilde{u}_n(r_n,s_n)=\left(\frac{\left(\displaystyle\sum_{\begin{footnotesize}(i,j)\not\in I_1\end{footnotesize}}m_{i,j} r_n^{i+j}s_n^{(n-1)(i+j-1)+j}\right)^n}{\left(\displaystyle\sum_{\begin{footnotesize}\substack{(i,j)\not\in I_1\\(i,j)\not=(1,0)\phantom{-}}\end{footnotesize}}n_{i,j}r_n^{i+j-1}s_n^{(n-1)(i+j-1)+j-1}\right)^{n-1}},\frac{\displaystyle\sum_{\begin{footnotesize}\substack{(i,j)\not\in I_1\\(i,j)\not=(1,0)\phantom{-}}\end{footnotesize}}n_{i,j}r_n^{i+j-1}s_n^{(n-1)(i+j-1)+j}}{\displaystyle\sum_{\begin{footnotesize}(i,j)\not\in I_1\end{footnotesize}}m_{i,j}r_n^{i+j-1}s_n^{(n-1)(i+j-1)+j}}\right)_{(r_n,s_n)}.$$
The condition $\widetilde{u}_n(S)=T$ is equivalent to $\frac{m_{1,0}^n} {n_{0,1}^{n-1}}=-1.$ In the coordinates $(\eta_1,\mu_1)$ centered at $S$ and $T,$ $$\widetilde{u}_n(\eta_1,\mu_1)=(\Gamma_1(\eta_1, \mu_1),\Gamma_2(\eta_1,\mu_1))_{(\eta_1,\mu_1)}$$ where $$\Gamma_1(\eta_1,\mu_1)=\frac{\left(
\displaystyle\sum_{\begin{footnotesize}(i,j)\not\in I_1\end{footnotesize}}m_{i,j} (1+\eta_1)^{i+j}\mu_1^{(n-1)(i+j-1)+j}\right)^{n-1}}{\displaystyle\left(\sum_{\begin{footnotesize}(i,j)\not\in I_1\end{footnotesize}}n_{i,j}(1+\eta_1)^{i+j-1}\mu_1^{(n-1)(i+j-1)+j-1}\right)^{n-1}}+1.$$ Thus $$\Gamma_1(0,\mu_1)=\frac{\big(m_{1,0}+m_{0,1}\mu_1+o(\mu_1^{n-2})\big)^n}{\big(n_{0,1}+n_{2,0}
\mu_1^{n-2}+o(\mu_1^{n-2})\big)^{n-1}}+1.$$ The gluing conditions of higher order of Lemma \ref{glucond} for $g=\widetilde{u}_n$ with $d=n-1$ are given by $\frac{\partial^\ell\Gamma_1}{\partial\mu_1^\ell}(0,0)=0,\,\,\, 1\leq \ell\leq n-2.$

\noindent If $n=3,$ then $$\Gamma_1(0,\mu_1)=\frac{\big(m_{1,0}+m_{0,1}\mu_1 +o(\mu_1)\big)^3}{\big(n_{0,1}+n_{2,0}\mu_1+o(\mu_1)\big)^2}+1=\frac{m_{1,0}^2}{n_{0,1}^2}\left(3m_{0,1}-2m_{1,0}\frac{n_{2,0}}{n_{0,1}}\right)\mu_1+o(\mu_1);$$ hence the condition is given by $n_{2,0}=\frac{3m_{0,1}n_{0,1}}{2m_{1,0}}.$

 \medskip

\noindent If $n\geq 4,$ then $\frac{\partial\Gamma_1}{\partial\mu_1}(0,0)=n\frac{m_{1,0}^{n-1}m_{0,1}}{n_{0,1}^{n-1}}.$ Since the coefficient $m_{1,0}$ is nonzero, $m_{0,1}=0.$ This implies $$\Gamma(0,\mu_1)=\frac{n_{2,0}}{n_{0,1}}\mu_1^{n-2}+o(\mu_1^{n-2}),$$ so the last condition is $n_{2,0}=0.$
\end{proof}

\subsection{Remarks on degenerate birational quadratic maps}

\noindent We can do the previous construction also for $n=2$ and find gluing conditions: a germ of biholomorphism $g$ of $\mathbb{C}^2$ around $0$ given by $$g(y,z)=\big(\sum_{0\leq i,j\leq 4} m_{i,j}y^i z^j,\sum_{0\leq i,j\leq 4} n_{i,j}y^i z^j\big)$$ sends $\widehat{\xi}_2$ on $\widehat{\xi}_1$ if and only if
$m_{0,0}=n_{0,0}=0,$ $n_{1,0}=0$ and $m_{1, 0}^2=n_{2, 0}-n_{0, 1}.$

\noindent As we have to blow up $\mathbb{P}^2$ at least ten times to get automorphism with nonzero entropy, we want to find automorphisms~$\varphi$ of $\mathbb{P}^2$ such that $(\varphi\Phi_2)^k\varphi(\widehat{\xi}_2)=\widehat{\xi}_1$ with $k\geq 4$ and $(\varphi\Phi_2)^i\varphi(\widehat{\xi})\not=\widehat{\xi}$ for $0\leq i\leq k-1.$ The Taylor series of~$(\varphi\Phi_2)^k\varphi$ is of the form $$\big(\sum_{0\leq i,j\leq 4} m_{i,j}y^i z^j,\sum_{0\leq i,j\leq 4} n_{i,j}y^i z^j\big)+o\big(\vert\vert(y,z)\vert\vert^4\big)$$ in the affine chart $x=1.$ The degrees of the equations increase exponentially with $k$ so even for $k=4$ it is not easy to explicit a family. However we can verify that if

\begin{align*}
&\varphi=\left[\begin{array}{ccc}
0& 0&-\frac{\alpha^2}{2}\\
0 & 1 & 0 \\
1 & 0 &\alpha
\end{array}
\right] &&\text{with } \alpha \text{ in }\mathbb{C}\text{ such that } \alpha^8+2\alpha^6+4\alpha^4+8\alpha^2+16=0,
\end{align*}

\noindent then $(\varphi\Phi_2)^4\varphi(\widehat{\xi}_2)=\widehat{\xi}_1.$ These examples are conjugate to those studied in \cite{BK4}.

\section{Birational maps whose exceptional locus is a line, II}\label{eg12}

\noindent In this section, we apply the results of \S\ref{casgal} to produce explicit examples of automorphism of rational surfaces obtained from birational maps in the $\mathrm{PGL}(3; \mathbb{C})$-orbit of the $\Phi_n$ for $n \geq 3.$ As we have to blow up $\mathbb{P}^2$ at least ten times to have nonzero entropy, we want to find automorphisms $\varphi$ of $\mathbb{P}^2$ and positive integers $k$ such that

\begin{equation}\label{conditions}
(k+1)(2n-1)\geq 10\, , \, \, \, \, (\varphi\Phi_n)^k\varphi(\widehat{\xi}_2)=\widehat{\xi}_1 \, \, \text{ and }\, \,  (\varphi\Phi_n)^i\varphi( P)\not=P \,\text{ for } \, 0\leq i\leq k-1.
\end{equation}



\subsection{Families of birational maps of degree $n$ with exponential growth conjugate to automorphism of $\mathbb{P}^2$ blown up in~$6n-3$ points}\label{15points}

\noindent Let $\varphi$ be an automorphism of $\mathbb{P}^2.$ We will find solutions of (\ref{conditions}) for $n\geq 3$ and $k=2.$

 \medskip

\noindent Remark that the Taylor series of $(\varphi\Phi_n)^2\varphi$ is of the form $$\big(\sum_{0\leq i,j\leq 2n-2} m_{i,j}y^i z^j,\sum_{0\leq i,j\leq 2n-2} n_{i,j}y^i z^j\big)+o\big(\vert\vert(y,z)\vert\vert^{2n-2}\big)$$ in the affine chart $x=1.$ Assume that $(\varphi\Phi_n)^2\varphi(P)=P;$ one can show that this is the case when
\begin{align*}
&\varphi_{\alpha,\beta}=\left[\begin{array}{ccc}
1 & \gamma &-\frac{1+\delta+\delta^2}{\alpha} \\[1ex]
0 & -1 & 0\\[1ex]
\alpha &\beta &\delta
\end{array}
\right].
\end{align*} One can verify that the conditions of the Proposition \ref{gluglu} are satisfied if $\beta=\dfrac{\alpha\gamma}{2}$ and $(1+\delta)^{3n}=-1.$

\noindent For $0 \leq k \leq 3n-1,$ let $\delta_k=\mathrm{exp} \big(\frac{(2k+1)\mathrm{i}\pi}{3n}\big)-1.$ If \begin{align*}
&\varphi_{\alpha,\beta}=\left[\begin{array}{ccc}
1 & \frac{2\beta}{\alpha} &-\frac{1+\delta_k+\delta_k^2}{\alpha} \\[1ex]
0 & -1 & 0\\[1ex]
\alpha &\beta &\delta_k
\end{array}
\right],
\end{align*}

\noindent  then $(\varphi_{\alpha,\beta}\Phi_n)^2\varphi(\widehat{\xi}_2)=\widehat{\xi}_1.$ Besides, $P \neq \varphi_{\alpha, \beta} \Phi_n (P).$ Hence we get:

\bigskip

\begin{thm}\label{nqccq}
{\sl Assume that $n\geq 3$ and that
\begin{align*}
&\varphi_{\alpha,\beta}=\left[\begin{array}{ccc}
1 & \frac{2\beta}{\alpha} &-\frac{1+\delta_k+\delta_k^2}{\alpha} \\[1ex]
0 & -1 & 0\\[1ex]
\alpha &\beta &\delta_k
\end{array}
\right], && \alpha\in\mathbb{C}^*,\,\beta\in\mathbb{C},\,\delta_k=\mathrm{exp} \left(\frac{(2k+1)\mathrm{i}\pi}{3n}\right)-1,\,0\leq k\leq 3n-1.
\end{align*}

\noindent Each map $\varphi_{\alpha,\beta}\Phi_n$ is conjugate to an automorphism of $\mathbb{P}^2$ blown up in $3(2n-1)$ points.

\noindent The first dynamical degree of~$\varphi_{\alpha,\beta}\Phi_n$ is strictly larger than $1;$ more precisely $\,\lambda(\varphi_{\alpha,\beta}\Phi_n)=\frac{n+\sqrt{n^2-4}}{2}.$

\smallskip
\smallskip

\noindent The family $\varphi_{\alpha,\beta}\Phi_n$ is holomorphically trivial.}
\end{thm}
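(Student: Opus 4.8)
The plan is to establish Theorem~\ref{nqccq} in three independent pieces: the lifting to an automorphism, the computation of the first dynamical degree, and the holomorphic triviality of the family. For the first piece, I would invoke the general machinery of \S\ref{desing}: by Proposition~\ref{degqcq}, the map $\Phi_n$ admits $\widehat{\xi}_1$ and $\widehat{\xi}_2$ (both infinitely near $P$ of length $2n-1$) as its minimal desingularization. The choice of $\varphi_{\alpha,\beta}$ was engineered precisely so that the gluing conditions of Proposition~\ref{gluglu} hold for the germ $u=\varphi_{\alpha,\beta}\Phi_n\varphi_{\alpha,\beta}$ at the relevant points, together with the closure condition $(\varphi_{\alpha,\beta}\Phi_n)^2\varphi_{\alpha,\beta}(\widehat{\xi}_2)=\widehat{\xi}_1$ and the genericity condition $(\varphi_{\alpha,\beta}\Phi_n)^i\varphi_{\alpha,\beta}(P)\neq P$ for $0\leq i\leq k-1$ (equation~(\ref{conditions})). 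First I would verify these Taylor-coefficient equations explicitly — this is where the constraints $\beta=\alpha\gamma/2$ and $(1+\delta_k)^{3n}=-1$ enter — so that $\varphi_{\alpha,\beta}\Phi_n$ lifts to an automorphism of $\mathbb{P}^2$ blown up at the $3(2n-1)$ points forming the orbit $\widehat{\xi}_1,\varphi\widehat{\xi}_2,(\varphi\Phi_n)\varphi\widehat{\xi}_2$.

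For the dynamical degree, I would pass to the induced action $(\varphi_{\alpha,\beta}\Phi_n)^*$ on $\mathrm{Pic}(\mathrm{Bl}\,\mathbb{P}^2)\cong\mathrm{H}^2$, using the basis $\{\mathrm{H},\mathrm{E}_1,\ldots\}$ and the explicit component swaps recorded in Proposition~\ref{degqcq}. Since a lifted automorphism is analytically stable, $\lambda(\varphi_{\alpha,\beta}\Phi_n)$ equals the spectral radius of this integral matrix. I would compute its characteristic polynomial and check that its largest eigenvalue is $\frac{n+\sqrt{n^2-4}}{2}$ (the root of $t^2-nt+1$), confirming in particular that it exceeds $1$ so the entropy is positive.

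For holomorphic triviality, the cleanest route is Proposition~\ref{critere0}: I must show $\mathfrak{m}(Y)=0$ for the family $Y=(\varphi_{\alpha,\beta}\Phi_n)$, i.e. that any two generic members are linearly conjugate. Equivalently, I would exhibit, for a generic base point, holomorphic maps $\gamma$ and $M$ with $\gamma_*(0)$ of full rank $n=\dim Y$ realizing the whole parameter variation by adjoint $\mathrm{PGL}(3;\mathbb{C})$-conjugation, so that $k=\dim Y$ in the criterion forces $\mathfrak{m}(Y)=\dim Y-\dim Y=0$. Concretely this amounts to producing, for parameters $(\alpha,\beta)$ and $(\alpha',\beta')$ (with $\delta_k$ fixed, since $k$ is discrete), an explicit $M\in\mathrm{PGL}(3;\mathbb{C})$ with $M(\varphi_{\alpha,\beta}\Phi_n)M^{-1}=\varphi_{\alpha',\beta'}\Phi_n$. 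A natural candidate is a diagonal (or upper-triangular) conjugation preserving the indeterminacy point $P$, the exceptional line $\Delta=\{z=0\}$ and the relevant geometric data of $\Phi_n$, which should absorb the rescaling of $\alpha$ and the shift of $\beta$.

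The main obstacle will be the triviality step: unlike the entropy and lifting computations, which are mechanical once the swap data is in hand, proving $\mathfrak{m}(Y)=0$ requires actually constructing the conjugating family $M(t)$ and verifying that the conjugation identity holds as an equality of birational maps, not merely infinitesimally. The delicate point is that $\Phi_n$ itself is rigid (its indeterminacy and exceptional loci are prescribed), so the conjugating matrices $M$ are forced to lie in the small subgroup stabilizing that structure; I expect this stabilizer to be exactly large enough to move $(\alpha,\beta)$ across its full range, but pinning down that the orbit dimension matches $\dim Y$ — and that no residual essential parameter survives — is the crux of the argument.
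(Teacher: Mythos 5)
Your proposal follows the paper's own proof essentially step for step: the lifting is obtained exactly as in the paper from Propositions \ref{degqcq} and \ref{gluglu} together with the conditions $\beta=\frac{\alpha\gamma}{2}$ and $(1+\delta_k)^{3n}=-1$; the dynamical degree is computed, as in the paper, from the characteristic polynomial of the induced integral action on the Picard group read off from the component swaps (the relevant factor being $X^2-nX+1$); and the triviality is proved via Proposition \ref{critere0} by exhibiting an explicit upper-triangular conjugating family, which is precisely what the paper does with $M_{\alpha,\beta}$ built from a local root $\mu$ of $\alpha=\mu^n\alpha_0$. One small slip worth correcting: for $k=2$ the germ to which the gluing conditions of Proposition \ref{gluglu} must be applied is $(\varphi_{\alpha,\beta}\Phi_n)^2\varphi_{\alpha,\beta}$, not $\varphi_{\alpha,\beta}\Phi_n\varphi_{\alpha,\beta}$, consistently with the closure condition $(\varphi_{\alpha,\beta}\Phi_n)^2\varphi_{\alpha,\beta}(\widehat{\xi}_2)=\widehat{\xi}_1$ that you state.
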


\begin{proof}
\noindent We only have to prove the two last statements. Let $\varphi$ denote $\varphi_{\alpha,\beta}.$ In the basis
\begin{align*}
&\{\Delta,\,\mathrm{E},\,
\mathrm{F},\,\mathrm{G}^1,\,\ldots,\,\mathrm{G}^{n-2},\,\mathrm{H},\,\mathrm{L}^1,\,\ldots,\,\mathrm{L}^{n-2},\,\varphi\mathrm{E},\,\varphi\mathrm{F},
\,\varphi\mathrm{G}^1,\,\ldots,\varphi\mathrm{G}^{n-2},\,\varphi\mathrm{K},\,\varphi\mathrm{M}^1\,\ldots,\,\varphi\mathrm{M}^{n-2},\\
&\hspace{1cm}\varphi\Phi_n \varphi\mathrm{E},\,\varphi\Phi_n \varphi\mathrm{F},
\,\varphi\Phi_n \varphi\mathrm{G}^1,\,\ldots,\,\varphi\Phi_n\varphi\mathrm{G}^{n-2},\,\varphi\Phi_n \varphi\mathrm{K},\,\varphi\Phi_n \varphi\mathrm{M}^1,\,\ldots,\,\varphi\Phi_n\varphi\mathrm{M}^{n-2}\}\end{align*} the matrix $M$ of $(\varphi\Phi_n)_*$ is
$$\left[\begin{BMAT}{c0c0c0c}{c0c0c0c}
\mathrm{0}_1& \transp A &\mathrm{0}_{1,2n-1} & \mathrm{0}_{1,2n-1}\\
\mathrm{0}_{2n-1,1} & B &\mathrm{0}_{2n-1} & \mathrm{Id}_{2n-1} \\
 A & C & \mathrm{0}_{2n-1} & \mathrm{0}_{2n-1}\\
\mathrm{0}_{2n-1,1} &\mathrm{0}_{2n-1} & \mathrm{Id}_{2n-1}&\mathrm{0}_{2n-1}
\end{BMAT}\right]\in\mathcal{M}_{6n-2}$$

\noindent where

\bigskip

\noindent \begin{footnotesize}\begin{align*}
&A=\left[\begin{array}{cccc}0\\ \vdots \\0\\1\end{array}\right] \in \mathcal{M}_{2n-1,1},&&B=\left[\begin{array}{ccccccc}
0&0&\cdots&\cdots&0&0&1\\
0&0&\cdots&\cdots&0&0&2\\
\vdots&\vdots&&&\vdots&\vdots&\vdots\\
\vdots&\vdots&&&\vdots&\vdots&n\\
\vdots&\vdots&&&\vdots&\vdots&n\\
0&0&\cdots&\cdots&0&0&\vdots\\
0&0&\cdots&\cdots&0&0&n
\end{array}\right]\in\mathcal{M}_{2n-1}, &&
C=\left[\begin{array}{ccccccc}
1&0&\cdots&\cdots&0&0&-1\\
0&0&\cdots&\cdots&0&1&-2\\
\vdots&\vdots&&\iddots&\iddots&0&\vdots\\
\vdots&\vdots&\iddots&\iddots&\iddots&\vdots&-n\\
0&0&\iddots&\iddots&&\vdots&-n\\
0&1&0&\cdots&\cdots&0&\vdots\\
0&0&0&\cdots&\cdots&0&-n
\end{array}\right]\in\mathcal{M}_{2n-1}.
\end{align*}
\end{footnotesize}

\noindent Its characteristic polynomial is $(X^2-nX+1)(X^2-X+1)^{n-2}(X+1)^{n-1}(X^2+X+1)^n  (X-1)^{n+1}.$ Hence $$\lambda(\varphi\Phi_n)=\frac{n+\sqrt{n^2-4}}{2}$$ which is larger than $1$ as soon as $n\geq 3.$

\medskip

\noindent Fix a point $(\alpha_0, \beta_0)$ in $\mathbb{C}^*\times\mathbb{C}.$
 We can find  locally around $(\alpha_0, \beta_0)$ a matrix $M_{\alpha,\beta}$ depending holomorphically on $(\alpha, \beta)$ such that for all $(\alpha,\beta)$ near $(\alpha_0,\beta_0),$ we have $\varphi_{\alpha,\beta}\Phi_n= M_{\alpha,\beta}^{-1}\varphi_{\alpha_0,\beta_0}\Phi_nM_{\alpha,\beta}:$
if $\mu$ is a local holomorphic solution of the equation $\alpha=\mu^n\alpha_0$ such that $\mu_0=1$ we can take
$$M_{\alpha,\beta}=\left[\begin{array}{ccc} 1 & \frac{\beta-\beta_0 \mu}{\mu^n\alpha_0} & 0\\[1 ex] 0 &\frac{1}{\mu^{n-1}}& 0 \\[1 ex] 0 & 0 & \frac{1}{\mu^n}\end{array}\right].$$
\noindent Thus $\mathfrak{m}(\varphi_{\alpha, \beta})=0.$

\end{proof}

\begin{rem}
Assume that $\delta_k=-2$ and $n$ is odd. Consider the automorphism $A$ of $\mathbb{P}^2$ given by
\begin{align*}
&A=(uy:\alpha x+\beta y-z:z), && \alpha\in\mathbb{C}^*,\,\beta\in\mathbb{C},\,u^n=\alpha.
\end{align*}

\noindent One can verify that $A(\varphi_{\alpha,\beta}\Phi_n)A^{-1}=(xz^{n-1}:z^n:x^n+z^n-yz^{n-1})$ which is of the form of (\ref{bedfordkk}).
\end{rem}

\subsection{Families of birational maps of degree $n$ with exponential growth conjugate to an automorphism of $\mathbb{P}^2$ blown up in $4n-2$ points}\label{10points}

\noindent In this section we will assume that $n$ is larger than $4.$ In that case,
we succeed in providing solutions of (\ref{conditions}) for $k=1.$

\begin{thm}\label{nqcq2}
{\sl Assume that $n\geq 4$ and
\begin{align*}
&\varphi_{\alpha,\beta,\gamma,\delta}=\left[\begin{array}{ccc}
\alpha &\beta & \frac{\beta(\gamma^2 \varepsilon_k-\alpha^2)}{\delta(\alpha-\gamma)}\\
0 & \gamma & 0\\
 \frac{\delta( \alpha-\gamma)}{\beta} & \delta &-\alpha
 \end{array}
 \right], &&\alpha,\, \beta \in \mathbb{C},\,\gamma,\,\delta\in\mathbb{C}^*,\,\alpha\not=\gamma,\,
 \varepsilon_k=\mathrm{exp} \, \left(\frac{(2k+1)\mathrm{i}\pi}{n}\right),\,0\leq k\leq n-1.
\end{align*}

\noindent Each map $\varphi_{\alpha,\beta,\gamma,\delta}\Phi_n$ is conjugate to an automorphism of $\mathbb{P}^2$ blown up in $4n-2$ points.

\noindent The first dynamical degree of~$\varphi_{\alpha,\beta,\gamma,\delta}\Phi_n$ is strictly larger than $1;$ more precisely $\lambda(\varphi_{\alpha,\beta,\gamma,\delta}\Phi_n)=\frac{(n-1)+\sqrt{(n-1)^2-4}}{2}.$

\smallskip
\smallskip

\noindent The family $\varphi_{\alpha,\beta,\gamma,\delta}\Phi_n$ is holomorphically trivial.}
\end{thm}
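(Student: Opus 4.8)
The plan is to follow closely the structure of the proof of Theorem~\ref{nqccq} and to establish the three assertions in turn, the genuinely delicate one being the last.

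First, to see that $\varphi_{\alpha,\beta,\gamma,\delta}\Phi_n$ is conjugate to an automorphism of $\mathbb{P}^2$ blown up at $4n-2$ points, I would verify the conditions (\ref{conditions}) for $k=1$. Since $\widehat{\xi}_1$ and $\widehat{\xi}_2$ have length $2n-1$ and correspond to the minimal desingularization of $\Phi_n$ by Proposition~\ref{degqcq}, blowing up $\widehat{\xi}_1$ together with $\varphi\widehat{\xi}_2$ produces $2(2n-1)=4n-2$ points, and the lift is an automorphism as soon as $\varphi\Phi_n\varphi(\widehat{\xi}_2)=\widehat{\xi}_1$, with disjoint supports and $\varphi(P)\neq P$ (the latter being guaranteed by the hypothesis $\alpha\neq\gamma$, since $\varphi(P)=(\alpha:0:\frac{\delta(\alpha-\gamma)}{\beta})$). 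Concretely, I would compute the germ $u$ attached to $\varphi\Phi_n\varphi$ at $P$, exactly as in \S\ref{15points}, and impose the four gluing conditions of Proposition~\ref{gluglu} in the case $n\geq 4$. The conditions $m_{0,0}=n_{0,0}=0$, $n_{1,0}=0$ and $m_{0,1}=n_{2,0}=0$ determine the vanishing entries and the lower-order Taylor coefficients of $u$, which is what rigidifies $\varphi_{\alpha,\beta,\gamma,\delta}$ into the displayed shape; the remaining equation $m_{1,0}^n+n_{0,1}^{n-1}=0$ forces a ratio of the parameters to be an $n$-th root of $-1$. Since $\varepsilon_k=\exp\!\big(\frac{(2k+1)\mathrm{i}\pi}{n}\big)$ runs exactly through the $n$-th roots of $-1$, solving this single equation for the entry $\frac{\beta(\gamma^2\varepsilon_k-\alpha^2)}{\delta(\alpha-\gamma)}$ yields the announced matrix.

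Next I would compute $\lambda$. As in Theorem~\ref{nqccq}, I would choose an explicit basis of the Picard group, now of rank $4n-1$: the line class $\Delta$, the $2n-1$ exceptional classes $\mathrm{E},\mathrm{F},\mathrm{G}^1,\ldots,\mathrm{G}^{n-2},\mathrm{H},\mathrm{L}^1,\ldots,\mathrm{L}^{n-2}$ lying over $\widehat{\xi}_1$, and the $2n-1$ exceptional classes lying over $\varphi\widehat{\xi}_2$. Reading the action of $(\varphi\Phi_n)_*$ off the component-swapping table of Proposition~\ref{degqcq} together with the linear action of $\varphi$, one assembles a block matrix $M\in\mathcal{M}_{4n-1}$ analogous to the one in Theorem~\ref{nqccq} but with only two exceptional blocks instead of three. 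I then compute its characteristic polynomial and expect a distinguished quadratic factor $X^2-(n-1)X+1$, whose dominant root $\frac{(n-1)+\sqrt{(n-1)^2-4}}{2}$ is real and strictly larger than $1$ precisely when $n\geq 4$; this is exactly the standing hypothesis and explains why the case $n=3$, where the root degenerates to $1$, must be excluded.

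Finally, for holomorphic triviality I would invoke Proposition~\ref{critere0}: writing $p=(\alpha,\beta,\gamma,\delta)$ for the parameter, it suffices to exhibit, around a fixed base value $p_0$, a holomorphic family $M(t)\in\mathrm{PGL}(3;\mathbb{C})$ with $M(0)=\mathrm{Id}$ together with a local parametrization $t\mapsto p(t)$ of the family having injective differential at $0$, such that $\varphi_{p(t)}\Phi_n=M(t)\,\varphi_{p_0}\Phi_n\,M(t)^{-1}$ for all $t$. The conjugators must fix $P$ and preserve $\Delta=\mathrm{Exc}\,\Phi_n$, hence are upper triangular; within this group I would solve the conjugation equation explicitly, just as the diagonal-plus-one-off-diagonal matrix did the job for the two parameters of Theorem~\ref{nqccq}. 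This last step is the main obstacle: with four parameters to absorb one must guess the correct shape of $M$ (typically involving $n$-th roots of ratios of the parameters) and check simultaneously that $M$ carries $\Phi_n$ to a linear multiple of itself and transports $\varphi_p$ onto $\varphi_{p_0}$. Once such an $M$ is written down the differential condition is immediate and Proposition~\ref{critere0} gives $\mathfrak{m}(\varphi_p\Phi_n)=0$.
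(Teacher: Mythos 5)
Your proposal is correct and follows essentially the same route as the paper: the first assertion is deduced from Proposition \ref{gluglu} applied (with $k=1$) to the germ of $\varphi\Phi_n\varphi$ at $P$, the second from the matrix of $(\varphi\Phi_n)_*$ in the rank-$(4n-1)$ basis you describe, whose characteristic polynomial contains the distinguished factor $X^2-(n-1)X+1$, and the third from an explicit conjugating family as in Proposition \ref{critere0}. The matrix you leave as a guess has exactly the shape you predict: the paper takes $M_{\alpha,\beta,\gamma,\delta}$ upper triangular with diagonal $(1,\mu^{n-1},\mu^n)$ and two further entries completing the first row, where $\mu$ is a local holomorphic $n$-th root of $\frac{\beta\gamma\delta_0(\gamma_0-\alpha_0)}{\beta_0\gamma_0\delta(\gamma-\alpha)}$ equal to $1$ at the base point, and checks that $\varphi_{\alpha,\beta,\gamma,\delta}\Phi_n=M_{\alpha,\beta,\gamma,\delta}^{-1}\,\varphi_{\alpha_0,\beta_0,\gamma_0,\delta_0}\Phi_n\,M_{\alpha,\beta,\gamma,\delta}$ for all parameters near the base point.
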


\begin{proof}
\noindent The first point is again a consequence of Proposition \ref{gluglu}.
\par
\smallskip
\noindent Let $\varphi$ denote $\varphi_{\alpha,\beta,\gamma,\delta}.$
In the basis
$$\{\Delta,\,\mathrm{E},\,
\mathrm{F},\,\mathrm{G}^1,\,\ldots,\,\mathrm{G}^{n-2},\,\mathrm{H},\,\mathrm{L}^1,\,\ldots,\,\mathrm{L}^{n-2},\,\varphi\mathrm{E},\,\varphi\mathrm{F},
\,\varphi\mathrm{G}^1,\,\ldots,\varphi\mathrm{G}^{n-2},\,\varphi\mathrm{K},\,\varphi\mathrm{M}^1\,\ldots,\,\varphi\mathrm{M}^{n-2}\}$$ the matrix $M$ of $(\varphi\Phi_n)_*$ is
$$\left[\begin{BMAT}{c0c0c}{c0c0c}
\mathrm{0}_1& \transp A & \mathrm{0}_{1,2n-1}\\
\mathrm{0}_{2n-1,1} & B & \mathrm{Id}_{2n-1} \\
 A & C & \mathrm{0}_{2n-1}\\
\end{BMAT}\right]\in\mathcal{M}_{4n-1}$$

\noindent where

\bigskip

\noindent \begin{footnotesize}\begin{align*}
&A=\left[\begin{array}{cccc}0\\ \vdots \\0\\1\end{array}\right] \in \mathcal{M}_{2n-1,1},&&B=\left[\begin{array}{ccccccc}
0&0&\cdots&\cdots&0&0&1\\
0&0&\cdots&\cdots&0&0&2\\
\vdots&\vdots&&&\vdots&\vdots&\vdots\\
\vdots&\vdots&&&\vdots&\vdots&n\\
\vdots&\vdots&&&\vdots&\vdots&n\\
0&0&\cdots&\cdots&0&0&\vdots\\
0&0&\cdots&\cdots&0&0&n
\end{array}\right]\in\mathcal{M}_{2n-1}, &&
C=\left[\begin{array}{ccccccc}
1&0&\cdots&\cdots&0&0&-1\\
0&0&\cdots&\cdots&0&1&-2\\
\vdots&\vdots&&\iddots&\iddots&0&\vdots\\
\vdots&\vdots&\iddots&\iddots&\iddots&\vdots&-n\\
0&0&\iddots&\iddots&&\vdots&-n\\
0&1&0&\cdots&\cdots&0&\vdots\\
0&0&0&\cdots&\cdots&0&-n
\end{array}\right]\in\mathcal{M}_{2n-1}.
\end{align*}
\end{footnotesize}

\noindent Its characteristic polynomial is $(X^2-(n-1)X+1)(X^2+1)^{n-2}(X+1)^{n-1}(X-1)^{n+1}.$ Hence $$\lambda(\varphi\Phi_n)=\frac{(n-1)+\sqrt{(n-1)^2-4}}{2}$$ which is larger than $1$ as soon as $n\geq 4.$

\medskip

\noindent Fix a point $(\alpha_0, \beta_0, \gamma_0, \delta_0)$ in $\mathbb{C}\times\mathbb{C}\times\mathbb{C}^*\times\mathbb{C}^*$ such that $\alpha_0\not=\gamma_0.$
 We can find locally around $(\alpha_0,\beta_0,\gamma_0,\delta_0)$ a ma\-trix~$M_{\alpha,\beta,\gamma,\delta}$ depending holomorphically on $(\alpha,\beta,\gamma,\delta)$ such that for all $(\alpha,\beta,\gamma,\delta)$ near $(\alpha_0,\beta_0,\gamma_0,\delta_0),$ we have $$\varphi_{\alpha,\beta,\gamma,\delta}\Phi_n= M_{\alpha,\beta,\gamma,\delta}^{-1}\varphi_{\alpha_0,\beta_0,\gamma_0,\delta_0}\Phi_nM_{\alpha,\beta,\gamma,\delta}:$$ if $\mu$ is a local holomorphic solution of the equation $\beta=\dfrac{\mu^n\beta_0 \gamma_0 \, \delta (\gamma-\alpha)}{\gamma \delta_0(\gamma_0-\alpha_0)}$ such that $\mu_0=0,$ we can take
\begin{align*}
&M_{\alpha,\beta,\gamma,\delta}=\left[\begin{array}{ccc} 1 & A & B\\ 0 &\mu^{n-1}& 0 \\ 0 & 0 & \mu^n\end{array}\right], &&\text{where} \quad \quad  A=\frac{\beta_0 \mu^{n-1}(\gamma \delta_0-\mu\gamma_0 \, \delta)}{\gamma\delta_0 (\gamma_0-\alpha_0)} &&\text{and} \quad \quad B=\frac{\beta_0 \mu^n(\alpha_0 \gamma-\alpha \gamma_0)}{\gamma\delta_0 (\alpha_0-\gamma_0)}.
\end{align*}
\end{proof}

\begin{rem}
If $u$ is any solution of the equation $u^n=(\alpha-\gamma)\varepsilon_k^n\gamma^{n-1}\beta^{n-1}\delta,$ consider the automorphism $A$ of $\mathbb{P}^2$ given~by
\begin{align*}
&A=(uy:\delta(\alpha-\gamma) x+\beta\delta y-\alpha\beta z:\varepsilon_k\beta\gamma z), && \alpha,\,\beta\in\mathbb{C},\,\gamma,\,\delta\in\mathbb{C}^*,\,\alpha\not=\gamma.
\end{align*}

\noindent One can verify that $A(\varphi_{\alpha,\beta,\gamma,\delta}\Phi_n)A^{-1}=(xz^{n-1}:z^n:x^n+\varepsilon_k yz^{n-1})$ which is of the form of (\ref{bedfordkk}).
\end{rem}

\subsection{An example in degree $3$ with no invariant line}

\begin{thm}\label{nqcq3}
{\sl Let $\varphi_\alpha$ be the automorphism of the complex projective plane given by
\begin{align*}
&\varphi_\alpha=\left[\begin{array}{ccc}\alpha & 2(1-\alpha) & 2+\alpha-\alpha^2\\
-1 & 0 & \alpha+1 \\
1 & -2 & 1-\alpha
\end{array} \right], &&\alpha\in\mathbb{C}\setminus\{0,\,1\}.
\end{align*}

\noindent Each map $\varphi_\alpha\Phi_3$ has no invariant line and is conjugate to an automorphism of $\mathbb{P}^2$ blown up in $15$ points.

\noindent The first dynamical degree of $\varphi_\alpha\Phi_3$ is $\frac{3+\sqrt{5}}{2}>1.$

\smallskip

\noindent The family $\varphi_\alpha\Phi_3$ is holomorphically trivial.}
\end{thm}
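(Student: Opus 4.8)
The statement packages four claims, which I would treat in turn; this is the case $n=3$, $k=2$ of the scheme of \S\ref{15points}, but run with a matrix $\varphi_\alpha$ deliberately chosen to break the invariant line. To get the conjugacy to an automorphism of $\mathbb{P}^2$ blown up at $15$ points, I would verify the three requirements (\ref{conditions}) with $k=2$. The bound $(k+1)(2n-1)=15\geq 10$ is free. The orbit condition $\varphi_\alpha(P)\neq P$ and $(\varphi_\alpha\Phi_3)\varphi_\alpha(P)\neq P$ is a short evaluation (note $\varphi_\alpha(P)=(\alpha:-1:1)$). For the closing relation $(\varphi_\alpha\Phi_3)^2\varphi_\alpha\,\widehat{\xi}_2=\widehat{\xi}_1$ I would apply Proposition \ref{gluglu} in the case $n=3$ to the germ $u=(\varphi_\alpha\Phi_3)^2\varphi_\alpha$ at $P$, which is a local biholomorphism by the orbit condition: one computes the $2$-jet of $u$ and checks the four equations $m_{0,0}=n_{0,0}=0$, $n_{1,0}=0$, $m_{1,0}^3+n_{0,1}^2=0$ and $n_{2,0}=\tfrac{3m_{0,1}n_{0,1}}{2m_{1,0}}$. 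The entries of $\varphi_\alpha$ are exactly a solution of this system; once it holds, the construction of the introduction lifts $\varphi_\alpha\Phi_3$ to an automorphism of $\mathbb{P}^2$ blown up along $\widehat{\xi}_1$, $\varphi_\alpha\widehat{\xi}_2$ and $(\varphi_\alpha\Phi_3)\varphi_\alpha\widehat{\xi}_2$, three infinitely near points of length $5$, hence $15$ points in all.

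For the absence of an invariant line---the genuinely new feature---I would first observe that $\Phi_3$ fixes each line $\{y=cz\}$ of the pencil through $P$ (indeed $\Phi_3(x:c:1)=(x+c^3:c:1)$) and contracts $\Delta=\{z=0\}$ to $P$, whereas any line not through $P$ is sent by $\Phi_3$ to a cubic, and hence by $\varphi_\alpha$ to a non-linear curve. Therefore an invariant line of $\varphi_\alpha\Phi_3$ would have to lie in the pencil through $P$ and be preserved by $\varphi_\alpha$. Since $\varphi_\alpha(P)=(\alpha:-1:1)\neq P$, the unique pencil line through both $P$ and $\varphi_\alpha(P)$ is $\{y+z=0\}$; evaluating $\varphi_\alpha$ on its points $(s:-t:t)$ yields an image whose last two coordinates sum to $4t\not\equiv 0$, so $\{y+z=0\}$ is not invariant and no invariant line exists.

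The first dynamical degree is then read off the induced action on cohomology. As $\varphi_\alpha\Phi_3$ is (conjugate to) an automorphism it is analytically stable, so $\lambda$ equals the spectral radius of $(\varphi_\alpha\Phi_3)_*$ on $\mathrm{Pic}$ of the blown-up surface, a lattice of rank $16$. Using the swapping rules for the exceptional components (as in Proposition \ref{degqcq} and the matrix computation in the proof of Theorem \ref{nqccq}) I would write this action as an explicit integer matrix and compute its characteristic polynomial; its dominant factor is $X^2-3X+1$, whose larger root is $\tfrac{3+\sqrt5}{2}>1$. Finally, for holomorphic triviality I would apply Proposition \ref{critere0} with $\gamma=\mathrm{id}$: any linear conjugacy $M$ between $\varphi_{\alpha_0}\Phi_3$ and $\varphi_\alpha\Phi_3$ must carry $\mathrm{Ind}=\{P\}$ and $\mathrm{Exc}=\Delta$ of one map to those of the other, hence fix $P$ and preserve $\Delta$, forcing $M$ to be upper triangular; as in Theorems \ref{nqccq} and \ref{nqcq2}, I would then exhibit an explicit upper-triangular $M(\alpha)$ with $M(\alpha_0)=\mathrm{Id}$ and $\varphi_\alpha\Phi_3=M(\alpha)\,\varphi_{\alpha_0}\Phi_3\,M(\alpha)^{-1}$, giving $\mathfrak{m}(\varphi_\alpha\Phi_3)=0$.

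The computational weight sits in the $2$-jet verification of Proposition \ref{gluglu} for the twice-iterated germ $(\varphi_\alpha\Phi_3)^2\varphi_\alpha$ (whose degree grows quickly) and in assembling the $16\times16$ Picard matrix, but these are routine. The main obstacle is producing the explicit conjugating family $M(\alpha)$: because $\varphi_\alpha$ has no invariant line, the transparent normalization used in Theorem \ref{nqccq} is unavailable, so the conjugacy equations must be solved directly---and it is precisely the need to satisfy these that dictates the particular $\alpha$-dependent entries of $\varphi_\alpha$.
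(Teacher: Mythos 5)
Your handling of the first three claims is correct and is essentially the paper's own route, stated in more detail than the paper bothers to give: the paper disposes of the ``$15$ points'' claim by citing Proposition \ref{gluglu} together with Remark \ref{malin} (which records that $P=(1:0:0)$, $\varphi_\alpha(P)=(\alpha:-1:1)$ and $\varphi_\alpha\Phi_3\varphi_\alpha(P)=(\alpha:1:1)$ are not aligned), and it gets $\lambda=\frac{3+\sqrt{5}}{2}$ by invoking Theorem \ref{nqccq} with $n=3$, exactly the Picard-lattice computation you describe. Your no-invariant-line argument is a self-contained variant of the paper's: the paper's Remark \ref{malin} works because an invariant line would have to contain the three non-collinear points $P$, $\varphi_\alpha(P)$ and $\varphi_\alpha\Phi_3\varphi_\alpha(P)$, whereas you pin the only candidate down to $\{y+z=0\}$ and check directly that $\varphi_\alpha$ does not preserve it; both are sound, and your version is arguably more transparent.

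The gap is in the triviality claim. You correctly reduce the problem, via Proposition \ref{critere0} and the invariance of $\mathrm{Ind}$ and $\mathrm{Exc}$ under linear conjugacy, to finding an upper-triangular holomorphic family $M(\alpha)$ with $M(\alpha_0)=\mathrm{Id}$ and $\varphi_\alpha\Phi_3=M(\alpha)^{-1}\varphi_{\alpha_0}\Phi_3\,M(\alpha)$, but you never produce it and explicitly defer it as ``the main obstacle''. The existence of such a family is precisely what has to be proved --- it is the entire non-citation content of the paper's proof --- so as written the assertion $\mathfrak{m}(\varphi_\alpha\Phi_3)=0$ is not established. The resolution is in fact immediate, and simpler than in Theorems \ref{nqccq} and \ref{nqcq2}: take $M_\alpha$ to be the translation $(x:y:z)\mapsto\big(x+(\alpha_0-\alpha)z:y:z\big)$. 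In the affine chart $z=1$ the map $\Phi_3$ is the shear $(x,y)\mapsto(x+y^3,y)$, so these translations commute with $\Phi_3$; the conjugacy equation therefore reduces to the identity $M_\alpha\varphi_\alpha=\varphi_{\alpha_0}M_\alpha$ in $\mathrm{PGL}(3;\mathbb{C})$, which holds by direct matrix multiplication (e.g. the $(1,3)$-entries of both products equal $2+\alpha_0-\alpha_0\alpha$, and the last two rows of both equal those of $\varphi_\alpha$). So your worry that the absence of an invariant line makes the normalization unavailable is unfounded; the lack of an invariant line constrains the surface geometry, not the centralizer of $\Phi_3$, and the conjugating family lives in the one-parameter translation subgroup that centralizes $\Phi_3$.
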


\begin{rem}\label{malin}
The three points $P,$ $\varphi_\alpha(P)$ and $\varphi_\alpha\Phi_3\varphi_\alpha(P)$ are not aligned in the complex projective plane. Indeed,
$P=(1:0:0),$ $\varphi_\alpha(P)=(\alpha:-1:1)$ and $\varphi_\alpha\Phi_3\varphi_\alpha(P)=( \alpha:1:1).$
\end{rem}

\begin{proof}
\noindent The first assertion is given by Remark \ref{malin} and by Proposition \ref{gluglu}, and the second by Theorem \ref{nqccq}.

\medskip

\noindent Fix a point $\alpha_0$ in $\mathbb{C}\setminus\{0,\,1\}.$ We can find  locally around $\alpha_0$ a matrix $M_\alpha$ depending holomorphically on $\alpha$ such that for all~$\alpha$ near $\alpha_0,$ we have $\varphi_\alpha\Phi_3= M_\alpha^{-1}\varphi_{\alpha_0}\Phi_3M_\alpha:$ it suffices to take
$$M_\alpha=\left[\begin{array}{ccc} 1 & 0 & \alpha_0-\alpha\\ 0 &1& 0 \\ 0 & 0 & 1\end{array}\right].$$
\end{proof}

\subsection{A conjecture}\label{conjecture}

\noindent Let us recall a question which was communicated to the first author by E. Bedford:
\medskip

\noindent {\sl Does there exist a birational map of the projective plane $f$ such that for all $\varphi$ in~$\mathrm{PGL}(3;\mathbb{C}),$ the map $\varphi f$ is not birationally conjugate to an automorphism with positive entropy?}

\medskip

\noindent We do not know at the present time the answer to this question. However, after a long series of examples, it seems that the birational maps $\Phi_n$ satisfy a rigidity property:

\begin{conj}
{\sl Let U be an open set of $\mathbb{C}^d,$ $n$ be an integer greater than or equal to $3$ and $\varphi_{\alpha}$ be a holomorphic family of matrices in $\mathrm{PGL}(3;\mathbb{C})$ parameterized by $U.$ Assume that there exists a positive integer $k$ such that
$$(k+1)(2n-1)\geq 10, \, \, \, (\varphi_\alpha\Phi_n)^i\varphi_\alpha( P)\not=P \,\text{ for } \, 0\leq i\leq k-1 \, \, \text{and} \, \, \,  (\varphi_\alpha\Phi_n)^k\varphi_\alpha(\widehat{\xi}_2)=\widehat{\xi}_1.$$
Then $(\varphi_{\alpha}\Phi_n)_{\alpha\in U}$ is holomorphically trivial.}
\end{conj}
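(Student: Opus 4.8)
The plan is to establish the criterion of Proposition~\ref{critere0} for the family $\widetilde{U}=(\varphi_\alpha\Phi_n)_{\alpha\in U}$ with vanishing generic number of parameters: it suffices to show that, for a generic $\alpha_0$, a full neighbourhood of $\alpha_0$ is mapped by $\alpha\mapsto\varphi_\alpha\Phi_n$ into the adjoint orbit of $\varphi_{\alpha_0}\Phi_n$. First I would reformulate linear conjugacy inside the left orbit $\mathrm{PGL}(3;\mathbb{C})\cdot\Phi_n$. If $M$ conjugates $\varphi_\alpha\Phi_n$ to $\varphi_{\alpha_0}\Phi_n$, then $M\varphi_\alpha\Phi_n=\varphi_{\alpha_0}\Phi_nM$ forces $\Phi_nM\Phi_n^{-1}=\varphi_{\alpha_0}^{-1}M\varphi_\alpha$ to be linear; setting $N=\Phi_nM\Phi_n^{-1}$ one gets $\varphi_\alpha=M^{-1}\varphi_{\alpha_0}N$ with $(N,M)$ in the group $G=\{(N,M)\in\mathrm{PGL}(3;\mathbb{C})^2 : N\Phi_n=\Phi_nM\}$ of Lemma~\ref{minimal}. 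Consequently the linear conjugacy classes inside $\mathrm{PGL}(3;\mathbb{C})\cdot\Phi_n$ are exactly the orbits of the twisted right action $\varphi\cdot M=M^{-1}\varphi\,\rho(M)$, where $\rho(M)=\Phi_nM\Phi_n^{-1}$ is a homomorphism on the normalizing subgroup, and proving holomorphic triviality amounts to showing that $(\varphi_\alpha)_{\alpha\in U}$ stays inside a single such orbit.

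The second step is to compute $G$ explicitly, exploiting that the exceptional locus of $\Phi_n$ is the single line $\Delta=\{z=0\}$ and its indeterminacy locus the single point $P=(1:0:0)$. Comparing the indeterminacy and exceptional loci of $N\Phi_n$ and of $\Phi_nM$ shows that both $M$ and $N$ must fix $P$ and preserve $\Delta$, hence are upper triangular; feeding an upper triangular $M=\begin{pmatrix}a&b&c\\0&d&e\\0&0&f\end{pmatrix}$ into $N\Phi_n=\Phi_nM$ and matching monomials annihilates the intermediate binomial terms of $(dy+ez)^n$, which forces $e=0$ and $d^n=af^{n-1}$. One then reads off $\dim G=3$, so every adjoint orbit meets $\mathrm{PGL}(3;\mathbb{C})\cdot\Phi_n$ in a set of dimension at most $3$; this already explains why the conjugators exhibited in Theorems~\ref{nqccq}, \ref{nqcq2} and \ref{nqcq3} are all upper triangular with $e=0$ and $d^n=af^{n-1}$.

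Third, let $V\subset\mathrm{PGL}(3;\mathbb{C})$ be the locus cut out by the hypotheses, that is, the set of $\varphi$ for which $(\varphi\Phi_n)^k\varphi(\widehat{\xi}_2)=\widehat{\xi}_1$ together with the disjointness conditions; via the explicit desingularization of Proposition~\ref{degqcq} and the gluing analysis of Proposition~\ref{gluglu}, membership in $V$ reduces to polynomial conditions on $\varphi$. The family of the conjecture is an irreducible subset of $V$. Because conjugating $\varphi\Phi_n$ by an element of $G$ preserves both the equation $(\varphi\Phi_n)^k\varphi(\widehat{\xi}_2)=\widehat{\xi}_1$ and the property of lifting to an automorphism, $V$ is invariant under the twisted action, hence is a union of conjugacy orbits. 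It would then suffice to prove that each irreducible component of $V$ is a single orbit; by the dimension bound of the previous step this reduces to the equality $\dim V=3$ at a generic point, with trivial generic stabiliser. Any irreducible family inside such a component would then lie in one adjoint orbit and be holomorphically trivial.

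The hard part, and the reason the statement remains a conjecture, is precisely this last dimension comparison. The locus $V$ is governed by $(\varphi\Phi_n)^k\varphi(\widehat{\xi}_2)=\widehat{\xi}_1$, whose expression in the Taylor coefficients of $\varphi$ has degree growing exponentially with $k$ (as already noted for $(n,k)=(2,4)$), so there is no uniform count of independent conditions and no evident reason \emph{a priori} why $V$ should have dimension exactly $3$ rather than larger for some triple $(n,k,d)$. An alternative route is to pass through the associated deformation $\mathfrak{X}=\psi^*\mathfrak{X}_{kN}$ of rational surfaces: the hypotheses guarantee, generically, those of Theorem~\ref{ineq} — positive first dynamical degree excludes nonzero holomorphic vector fields, so the clusters land in $S_{kN}^{\dag}$ — whence $\mathfrak{m}(\widetilde{U})\leq\mathfrak{m}(\mathfrak{X})$ (and in fact equality, by the last theorem of the introduction). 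By Theorem~\ref{para}, the vanishing $\mathfrak{m}(\mathfrak{X})=0$ is equivalent to the assertion that the ordered clusters $\psi(\alpha)=(\widehat{\xi}_1,\varphi_\alpha\widehat{\xi}_2,\ldots,(\varphi_\alpha\Phi_n)^{k-1}\varphi_\alpha\widehat{\xi}_2)$ all lie in a single $\mathrm{PGL}(3;\mathbb{C})$-orbit of $S_{kN}$. Establishing this projective equivalence of clusters uniformly is the geometric heart of the problem, and it collides with exactly the same rigidity question as the direct approach; I would expect progress to require a genuinely new argument expressing how the concentration of $\widehat{\xi}_1$ and $\widehat{\xi}_2$ at the single point $P$ propagates through the iteration.
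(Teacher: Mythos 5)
The statement you were asked to prove is not a theorem of the paper: it appears in \S\ref{conjecture} as a \emph{conjecture}, formulated after a long series of examples, and the only evidence the paper offers for it is that every explicit family built from the maps $\Phi_n$ (Theorems \ref{nqccq}, \ref{nqcq2} and \ref{nqcq3}) turns out to be holomorphically trivial, each time by exhibiting an explicit conjugating matrix $M_\alpha.$ So there is no proof in the paper to compare yours against, and your text --- quite properly --- does not claim to be one either: you say explicitly that the dimension comparison at the core of your plan is ``the reason the statement remains a conjecture.''

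That said, the partial reduction you set up is correct and consistent with the paper. The identification of adjoint conjugacy inside the left orbit $\mathrm{PGL}(3;\mathbb{C})\cdot\Phi_n$ with the twisted action of $G=\{(N,M):N\Phi_n=\Phi_nM\}$ is right; your computation of $G$ (both matrices upper triangular, fixing $P$ and preserving $\Delta,$ with $e=0$ and $d^n=af^{n-1},$ hence $\dim G=3$) checks out, and it is corroborated by the conjugators $M_{\alpha,\beta},$ $M_{\alpha,\beta,\gamma,\delta}$ and $M_\alpha$ appearing in the proofs of Theorems \ref{nqccq}, \ref{nqcq2} and \ref{nqcq3}, all of which satisfy exactly these constraints. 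The invariance of the locus $V$ under the twisted action is also correct: for $(N,M)$ in $G,$ uniqueness of the minimal desingularization forces $M\,\widehat{\xi}_1=\widehat{\xi}_1$ and $N\widehat{\xi}_2=\widehat{\xi}_2,$ so for $\varphi'=M^{-1}\varphi N$ one gets $(\varphi'\Phi_n)^k\varphi'\,\widehat{\xi}_2=M^{-1}(\varphi\Phi_n)^k\varphi\,\widehat{\xi}_2=\widehat{\xi}_1.$ Your alternative route via $\mathfrak{m}(\widetilde{U})=\mathfrak{m}(\psi^*\mathfrak{X}_{kN})$ is precisely Theorem \ref{genius}. The genuine gap is the one you name yourself: nothing in this reduction bounds the dimension of (a component of) $V$ above by the orbit dimension, equivalently nothing shows that the clusters $\psi(\alpha)$ stay in a single $\mathrm{PGL}(3;\mathbb{C})$-orbit in $S_{kN}.$ Moreover, this gap cannot be closed by your framework alone, because the framework is not specific to $\Phi_n$: the paper points out in \S\ref{conjecture} that for the maps (\ref{bedfordkk}) with $n=5$ there is a one-parameter family $(\varphi_s f_a)_s$ in a single left orbit, lifting to automorphisms, whose generic number of parameters is $1.$ For such maps the analogous locus $V$ strictly exceeds the orbit dimension, so any proof of the conjecture must exploit a property peculiar to $\Phi_n$ --- presumably, as you suggest, the concentration of $\widehat{\xi}_1$ and $\widehat{\xi}_2$ at the single point $P.$ In short: your proposal contains no errors, but it is a correctly diagnosed strategy for what is, in the paper and to date, an open problem.
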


\noindent Let us remark that the maps of the form (\ref{bedfordkk}) don't satisfy this conjecture: for $n=5$ one can verify that for any nonzero complex number $s,$
\begin{align*}
& Af_a=f_{s^2a}B, &&\text{where} \quad \quad \quad  A= \big(x:\frac{y}{s}:c(1/s-s^4)y+s^4z\big) \quad \text{and} \quad  B=(sx:s^5y:z).
\end{align*}

\noindent Thus $f_{s^2a}$ and $B^{-1}Af_a$ are linearly conjugate. For a fixed nonzero complex number $a,$ if $\varphi_s=B^{-1}A,$ we consider the family $(\varphi_sf_a)_s.$ This family can be lifted to a family of rational surface automorphisms. Since the familiy $f_b$ is generically effective (\textit{cf.} Example \ref{malin}), the generic number of parameters of $(\varphi_sf_a)_s$  is $1.$

\section{A birational cubic map blowing down one conic and one line}\label{eg2}

\noindent Let $f$ denote the following birational map $$f=\big(y^2z:x(xz+y^2):y(xz+y^2)\big);$$ it blows up two points and blows down two curves, more precisely
\begin{align*}
& \mathrm{Ind}\, f=\{R=(1:0:0),\,P=(0:0:1)\},&& \mathrm{Exc}\, f=\big(\mathcal{C}=\{xz+y^2=0\}\big)\cup\big(\Delta'=\{y=0\}\big).
\end{align*}

\noindent One can verify that $f^{-1}=\big(y(z^2-xy):z(z^2-xy):xz^2\big)$ and
\begin{align*}
& \mathrm{Ind}\, f^{-1}=\{Q=(0:1:0),\,R\},&& \mathrm{Exc}\, f^{-1}=\big(\mathcal{C}'=\{z^2-xy=0\}\big)\cup\big(\Delta''=\{z=0\}\big).
\end{align*}

\noindent Set $\Delta=\{x=0\}.$ The sequences of blowups corresponding to the minimal desingularization of $f$ can be computed in five steps, as explained below:

\begin{itemize}
\item First we blow up $R$ in the domain and in the range and denote by
$\mathrm{E}$ the exceptional divisor. One can show that $\mathcal{C}_1=\{u_1+v_1=0\}$ is sent on $\mathrm{E},$ $\mathrm{E}$ is blown down to $Q=(0:1:0)$ and $S=\mathrm{E}\cap\Delta''_1$ is a point of indeterminacy.

\item Next we blow up $P$ in the domain and $Q$ in the range and denote by $\mathrm{F}$ (resp. $\mathrm{G}$) the exceptional divisor associated with $P$ (resp. $Q$). One can verify that $\mathrm{F}$ is sent on $\mathcal{C}'_2,$ $\mathrm{E}_1$ is blown down to $T=\mathrm{G}\cap\Delta_2$ and $\Delta'_2$ is blown down to $T.$

\item Then we blow up $S$ in the domain and $T$ in the range and denote by $\mathrm{H}$ (resp. $\mathrm{K}$) the exceptional divisor obtained by blowing up $S$ (resp. $T$). One can show that
$\mathrm{H}$ is sent on $\mathrm{K};$ $\mathrm{E}_2$ and $\Delta'_3$ are blown down to a point $V$ on $\mathrm{K},$ and there is a point $U$ of indeterminacy on $\mathrm{H}.$

\item We will now blow up $U$ in the domain and $V$ in the range; let $\mathrm{L}$ (resp. $\mathrm{M}$) be the exceptional divisor obtained by blowing up $U$ (resp. $V$).
There is a point $Y$ of indeterminacy on $\mathrm{L},$ $\mathrm{L}$ is sent on $\mathrm{G}_2,$  $\mathrm{E}_3$ is sent on $\mathrm{M}$ and $\Delta'_4$ is blown down to a point $Z$ of $\mathrm{M}.$

\item Finally we blow up $Y$ in the domain and $Z$ in the range. The line $\Delta'_5$ is sent on $\Omega$ and $\mathrm{N}$ is sent on $\Delta''_5,$ where $\Omega$ (resp. $\mathrm{N}$) is the exceptional divisor obtained by blowing up $Z$ (resp. $Y$).
\end{itemize}

\bigskip

\begin{pro}\label{coniquecomp}
{\sl Let $\widehat{\zeta}_1$ (resp. $\widehat{\zeta}_2$) denote the point infinitely near $R$ (resp. $Q$) obtained by blowing up $R,$ $S,$ $U$ and $Y$ (resp. $Q,$ $T,$ $V$ and $Z$). If we put $\,\widehat{\xi}_1=\widehat{\zeta}_1\cup\{P\}$ and $\widehat{\xi}_1=\widehat{\zeta}_2\cup\{R\},$ then $\widehat{\xi}_1$ and $\widehat{\xi}_2$ correspond to the minimal desingularization of $f.$ The map $f$ induces an isomorphism between $\mathrm{Bl}_{\widehat{\xi}_1}\,\mathbb{P}^2$ and $\mathrm{Bl}_{\widehat{\xi}_2}\,\mathbb{P}^2,$ and the different components are swapped as follows:
\begin{align*}
&&\mathcal{C}\to\mathrm{E}, &&\mathrm{F}\to\mathcal{C}', && \mathrm{H}\to\mathrm{K}, &&\mathrm{L}\to\mathrm{G}, && \mathrm{E}\to\mathrm{M}, &&\Delta'\to\Omega,   && \mathrm{N}\to \Delta''.
\end{align*}}
\end{pro}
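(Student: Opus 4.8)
The plan is to assemble the proposition from the five-step local analysis that immediately precedes it, invoking the general theory of minimal desingularizations recalled in \S\ref{desing}. The computations already displayed exhibit, in explicit affine charts centered at each successive blown-up point, the full local behaviour of $f$: at every stage they identify which curve is contracted, which divisor is the image of a given exceptional component, and, crucially, whether a point of indeterminacy survives. First I would read off from these charts that after blowing up $R$, $P$ and then the infinitely near chains $S, U, Y$ over $R$ in the domain and $T, V, Z$ over $Q$ in the range, the lifted map has empty indeterminacy locus. Concretely, the final step shows that the formulas for $\Delta'_5 \to \Omega$ and $\mathrm{N}\to\Delta''_5$ are everywhere defined and that no indeterminacy point remains, so that $\mathrm{Bl}_{\widehat{\xi}_1}\mathbb{P}^2$ resolves $f$.

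Next I would establish minimality. By \S\ref{desing}, the minimal desingularization is the smallest blowup making $f$ regular and is obtained by blowing up exactly the (possibly infinitely near) indeterminacy points. The step-by-step analysis provides precisely this certification: at each stage the center ($R$, then $S$ on $\mathrm{E}$, then $U$ on $\mathrm{H}$, then $Y$ on $\mathrm{L}$, and likewise $P$ in the domain) is a genuine point of indeterminacy of the partially resolved map, and the infinitely near incidences $S\in\mathrm{E}$, $U\in\mathrm{H}$, $Y\in\mathrm{L}$ match the tower $\widehat{\zeta}_1$ of the statement. Hence none of these blowups is superfluous, and $\widehat{\xi}_1 = \widehat{\zeta}_1\cup\{P\}$ corresponds to the minimal desingularization of $f$. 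Applying the symmetric analysis to $f^{-1}$, whose indeterminacy locus is $\{Q,R\}$ and whose exceptional curves are $\mathcal{C}'$ and $\Delta''$, shows that $\widehat{\xi}_2 = \widehat{\zeta}_2\cup\{R\}$ corresponds to the minimal desingularization of $f^{-1}$.

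The isomorphism between $\mathrm{Bl}_{\widehat{\xi}_1}\mathbb{P}^2$ and $\mathrm{Bl}_{\widehat{\xi}_2}\mathbb{P}^2$ then follows at once from the third bullet of \S\ref{desing}, which guarantees that $f$ induces an isomorphism between the minimal desingularizations of $f$ and of $f^{-1}$. The swapping table is obtained simply by collecting the image statements scattered through the five steps: $\mathcal{C}\to\mathrm{E}$ from step one, $\mathrm{F}\to\mathcal{C}'$ from step two, $\mathrm{H}\to\mathrm{K}$ from step three, $\mathrm{L}\to\mathrm{G}$ and $\mathrm{E}\to\mathrm{M}$ from step four, and $\Delta'\to\Omega$ together with $\mathrm{N}\to\Delta''$ from step five. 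Each entry is read directly off the local expressions already computed, so nothing beyond bookkeeping is required here.

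I expect the main obstacle to be organizational rather than conceptual: one must track the coordinate changes consistently through the roughly ten blowups and confirm at each stage both that the indicated point is the unique surviving indeterminacy point and that it lies on the correct exceptional divisor, so that $\widehat{\zeta}_1$ and $\widehat{\zeta}_2$ are genuine length-four towers rather than collections of separated points. The most delicate verification is the compatibility of the successive charts in the range, in particular checking that the curves contracted there — $\mathrm{E}_1$ and $\Delta'_2$ to $T$, then $\mathrm{E}_2$ and $\Delta'_3$ to $V$, then $\Delta'_4$ to $Z$ — collapse to single points exactly as claimed, since it is these successive contractions that force the infinitely near structure of $\widehat{\zeta}_2$ over $Q$.
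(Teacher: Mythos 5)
Your proposal is correct and follows essentially the same route as the paper: the paper offers no separate proof beyond the five-step chart computation preceding the proposition, and your argument simply organizes that computation (termination of indeterminacy, necessity of each center, the general facts of \S 1.2 on minimal desingularizations, and the bookkeeping of contracted/exceptional components) into the stated conclusion. The only point you make explicit that the paper leaves implicit is the minimality certification via the fact that every center blown up is an indeterminacy point of the partially lifted map (resp.\ an image of a contracted curve on the range side), which is exactly the standard fact the paper's computation is designed to exhibit.
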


\noindent The following statement gives the gluing conditions:

\begin{pro}\label{recolconfig9}
{\sl Let $u(x,z)=\left(\displaystyle\sum_{(i,j)\in\mathbb{N}^2} m_{i,j}x^iz^j, \displaystyle\sum_{ (i,j)\in\mathbb{N}^2} n_{i,j}x^iz^j\right)$ be a germ of biholomorphism at $Q.$

\noindent Then $u$ can be lifted to a germ of biholomorphism between $\mathrm{Bl}_{\widehat{\zeta}_2}\mathbb{P}^2$ and  $\mathrm{Bl}_{\widehat{\zeta}_1}\mathbb{P}^2$  if and only if:
\begin{itemize}
\item $m_{0,0}=n_{0,0}=0;$

\item $n_{0,1}=0;$

\item $n_{0,2}+n_{1,0}+m_{0,1}^2=0;$

\item $n_{0,3}+n_{1,1}+2m_{0,1}(m_{0,2}+m_{1,0})=0.$
\end{itemize}}
\end{pro}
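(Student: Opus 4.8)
The plan is to follow the same scheme as in the proof of Proposition \ref{gluglu}: we regard $\widehat{\zeta}_2$ and $\widehat{\zeta}_1$ as chains of four successive blowups, namely $Q\to T\to V\to Z$ in the source and $R\to S\to U\to Y$ in the target, and we lift $u$ one blowup at a time. At each level the only obstruction to lifting across the next blowup is that the current lift send the next source center to the next target center; by Lemma \ref{glucond} (used in the elementary case of a single blowup) this determines the lift explicitly and produces one polynomial condition on the Taylor coefficients $m_{i,j},n_{i,j}$ of $u$. Because the two chains do not lie along a single coordinate direction — the geometric analysis preceding Proposition \ref{coniquecomp} shows that the successive centers sit at the intersections of the latest exceptional divisor with the strict transforms of different curves ($\Delta$, $\Delta''$, and the images of $\mathcal{C}$, $\mathrm{E}$, $\Delta'$) — I would not invoke Lemma \ref{glucond} on the whole chain at once, but rather recompute suitable local coordinates at each new center.

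Concretely, I would first impose $u(Q)=R$, which in the chosen affine charts is exactly $m_{0,0}=n_{0,0}=0$, and compute the resulting lift $\widetilde{u}_1$ on the blowup of $Q$ via the formula of Lemma \ref{glucond}. Writing $\widetilde{u}_1$ in coordinates centered at $T=\mathrm{G}\cap\Delta_2$ (source) and $S=\mathrm{E}\cap\Delta''_1$ (target), the requirement $\widetilde{u}_1(T)=S$ gives the linear condition $n_{0,1}=0$ and allows the lift $\widetilde{u}_2$ across the blowups of $T$ and $S$. Repeating, the requirement $\widetilde{u}_2(V)=U$ gives the quadratic condition $n_{0,2}+n_{1,0}+m_{0,1}^2=0$, and the requirement $\widetilde{u}_3(Z)=Y$ gives the cubic condition $n_{0,3}+n_{1,1}+2m_{0,1}(m_{0,2}+m_{1,0})=0$. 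This exhausts the four blowups, so the displayed relations are exactly the obstructions. For the converse, once all four conditions hold, each successive lift $\widetilde{u}_1,\widetilde{u}_2,\widetilde{u}_3,\widetilde{u}_4$ is a biholomorphism near the relevant exceptional component (as in Lemma \ref{glucond}), and $\widetilde{u}_4$ is the sought biholomorphism between $\mathrm{Bl}_{\widehat{\zeta}_2}\mathbb{P}^2$ and $\mathrm{Bl}_{\widehat{\zeta}_1}\mathbb{P}^2$.

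The main obstacle is the bookkeeping of local coordinates through the four blowups. Since the chain turns, one must at each step first identify the chart in which the next center becomes the origin — this is dictated by the curves recorded in Proposition \ref{coniquecomp} — and then re-expand the current lift to the order needed to read off the next condition. The delicate part is carrying the quadratic and cubic Taylor coefficients of $u$ correctly through these changes of coordinates, since it is precisely there that the nonlinear combinations $m_{0,1}^2$ and $2m_{0,1}(m_{0,2}+m_{1,0})$ appear. One must also check at each level that imposing the image of the new center is the only constraint, so that exactly one scalar equation is produced per blowup beyond the two coming from $u(Q)=R$; this last verification, for the bottom center of each chain, is the consistency check that closes the argument.
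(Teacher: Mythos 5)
Your plan is correct, but note first that the paper never proves Proposition \ref{recolconfig9}: it is stated bare, the only model being the proof of Proposition \ref{gluglu}. Your step-by-step lifting does work, and the conditions fall out exactly where you say they do: $u(Q)=R$ gives $m_{0,0}=n_{0,0}=0$; on the exceptional divisor $\widetilde{u}_1$ acts by $\mathbb{P}(du_Q)$, and $\widetilde{u}_1(T)=S$ means $du_Q(0,1)$ is proportional to $(1,0)$, i.e. $n_{0,1}=0$; in the charts $x=a_2b_2^2,\ z=b_2$ at $V=(1,0)$ and $y=y_2,\ z=y_2^2z_2$ at $U=(0,-1)$, one computes $\widetilde{u}_2(V)=\bigl(0,(n_{0,2}+n_{1,0})/m_{0,1}^2\bigr)$, whence the third condition, and the fourth arises the same way at the last level; and indeed each center beyond $Q$ contributes exactly one scalar equation, since the lifts automatically match the exceptional divisors. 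Where you go astray is in your premise for renouncing Lemma \ref{glucond} on the whole chains: these chains do not ``turn''. They are straight in precisely the sense of Lemma \ref{glucond}, only along conics rather than along lines: resolving $f$ and $f^{-1}$ in coordinates shows that $T,V,Z$ lie on the successive strict transforms of $\mathcal{C}'=\{z^2-xy=0\}$ and $S,U,Y$ on those of $\mathcal{C}=\{xz+y^2=0\}$ (for instance $V$ is the point $x/z^2=1$ of $\mathrm{K}$, which is exactly where the strict transform of $\mathcal{C}'$ meets $\mathrm{K}$). Hence the quadratic changes of coordinates $(\eta_1,\mu_1)=(x-z^2,z)$ at $Q$ and $(\eta_1',\mu_1')=(z+y^2,y)$ at $R$ identify $\widehat{\zeta}_2$ and $\widehat{\zeta}_1$ with the model $\widehat{\Omega}_4$ of Lemma \ref{glucond}, and that lemma with $d=4$ finishes the proof in a single Taylor expansion: writing $u=(u_1,u_2)$, the conditions read $u(Q)=R$ together with the vanishing to order $4$ in $\mu_1$ of $(u_2+u_1^2)\bigl(\mu_1^2,\mu_1\bigr)$, whose coefficients in degrees $1,2,3$ are precisely $n_{0,1}$, $n_{0,2}+n_{1,0}+m_{0,1}^2$ and $n_{0,3}+n_{1,1}+2m_{0,1}(m_{0,2}+m_{1,0})$. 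This is almost certainly the intended argument --- the self-gluing conditions listed in the example following Theorem \ref{critere} come from the same expansion with $u_1-u_2^2$ in place of $u_2+u_1^2$ --- and it both explains conceptually why the nonlinear terms appear (they encode the curvature of the contracted conics) and replaces your four rounds of chart bookkeeping by one computation. Your longer route reaches the same conditions and is sound; it just misses the geometric shortcut.
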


\noindent Let $\varphi$ be an automorphism of $\mathbb{P}^2.$ We will adjust $\varphi$ in order that $(\varphi f)^k\varphi$ sends $\widehat{\xi}_2$ onto $\widehat{\xi}_1.$ As we have to blow up~$\mathbb{P}^2$ at least ten times to have nonzero entropy, $k$ must be larger than $2,$ $\{ \widehat{\xi}_1,$ $\varphi\widehat{\xi}_2,$ $\varphi f\varphi\widehat{\xi}_2,$ $(\varphi f)^2\varphi\widehat{\xi}_2,$ $\ldots,$ $(\varphi f)^{k-1}\varphi\widehat{\xi}_2 \}$ must all have distinct supports and $(\varphi f)^k\varphi\widehat{\xi}_2=\widehat{\xi}_1.$ We provide such matrices for $k=3:$ by Proposition~\ref{recolconfig9} one can verify that for every nonzero complex number $\alpha,$
$$\varphi_\alpha=\left[\begin{array}{ccc}\frac{2\alpha^3}{343}(37\mathrm{i}\sqrt{3}+3)& \alpha& -\frac{2\alpha^{2}}{49}(5\mathrm{i}\sqrt{3}+11)\\[2ex]
\frac{\alpha^2}{49}(-15+11\mathrm{i}\sqrt{3}) & 1 & -\frac{\alpha}{14}(5\mathrm{i}\sqrt{3}+11)\\[2ex]
 -\frac{\alpha}{7}(2\mathrm{i}\sqrt{3}+3)& 0 & 0\end{array} \right]$$

\noindent is such a $\varphi.$

\begin{thm}\label{conique}
{\sl Assume that $f=\big(y^2z:x(xz+y^2):y(xz+y^2)\big)$ and that
\begin{align*}
\varphi_\alpha=\left[\begin{array}{ccc}\frac{2\alpha^3}{343}(37\mathrm{i}\sqrt{3}+3)& \alpha& -\frac{2\alpha^{2}}{49}(5\mathrm{i}\sqrt{3}+11)\\[2ex]
\frac{\alpha^2}{49}(-15+11\mathrm{i}\sqrt{3}) & 1 & -\frac{\alpha}{14}(5\mathrm{i}\sqrt{3}+11)\\[2ex]
 -\frac{\alpha}{7}(2\mathrm{i}\sqrt{3}+3)& 0 & 0\end{array} \right], && \alpha \in \mathbb{C}^*.
\end{align*}

\noindent Each map $\varphi_\alpha f$ is conjugate to an automorphism of $\mathbb{P}^2$ blown up in $15$ points.

\noindent The first dynamical degree of~$\varphi_\alpha f$ is $\lambda(\varphi_\alpha f)=\frac{3+\sqrt{5}}{2}.$

\smallskip

\noindent The family $\varphi_\alpha f$ is holomorphically trivial.}
\end{thm}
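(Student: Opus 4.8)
The plan is to prove the three assertions in the same order, and by the same methods, as in the proofs of Theorems \ref{nqccq}--\ref{nqcq3}, using the desingularization data of Proposition \ref{coniquecomp} and the gluing conditions of Proposition \ref{recolconfig9}. The fact that $\varphi_\alpha f$ lifts to an automorphism is essentially prepared by the discussion preceding the statement: one first checks that $\varphi_\alpha$ satisfies the four gluing conditions of Proposition \ref{recolconfig9}, so that $(\varphi_\alpha f)^3\varphi_\alpha\,\widehat{\xi}_2=\widehat{\xi}_1$. It then remains to verify that the supports of $\widehat{\xi}_1,\varphi_\alpha\widehat{\xi}_2,\varphi_\alpha f\varphi_\alpha\widehat{\xi}_2,(\varphi_\alpha f)^2\varphi_\alpha\widehat{\xi}_2$ are pairwise disjoint (a finite explicit computation), after which the factorization theorem recalled in \S\ref{desing} lifts $\varphi_\alpha f$ to an automorphism of $\mathbb{P}^2$ blown up in the resulting orbit configuration of $15$ points.

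For the first dynamical degree I would compute the action $(\varphi_\alpha f)_*$ on $\mathrm{Pic}$ exactly as in the proof of Theorem \ref{nqccq}. Since the surface is $\mathbb{P}^2$ blown up in $15$ points, $\mathrm{Pic}$ is free of rank $16$; I would take as basis the class of a line together with the classes of the total transforms of the exceptional curves, ordered along the $(\varphi_\alpha f)$-orbit, and read off the matrix directly from the list of swapped components in Proposition \ref{coniquecomp}, the only non-permutation data being the image of the class of a line under the cubic map (which produces the ``degree'' row and column involving the classes of the blown-down conic $\mathcal{C}$ and line $\Delta'$). I expect the characteristic polynomial to factor as $(X^2-3X+1)$ times a product of cyclotomic polynomials of total degree $14$. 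Since $\varphi_\alpha f$ is an automorphism it is analytically stable, so by \S\ref{desing} its first dynamical degree is the spectral radius of $(\varphi_\alpha f)_*$, namely the largest root $\tfrac{3+\sqrt5}{2}$ of $X^2-3X+1$.

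For holomorphic triviality the efficient route is to exhibit explicitly a conjugating family, which is made possible by a symmetry of $f$ under diagonal tori. A direct check gives, for any constants with $ac=b^2$,
\[
f\circ\mathrm{diag}(a,b,c)=\mathrm{diag}(c,a,b)\circ f,
\]
so that $f$ cyclically permutes the diagonal entries of such matrices. On the other hand, reading off the $\alpha$-homogeneity of the entries of $\varphi_\alpha$ yields $\varphi_\alpha=\mathrm{diag}(\alpha,1,\alpha^{-1})\,\varphi_1\,\mathrm{diag}(\alpha^2,1,\alpha)$, where $\varphi_1=\varphi_\alpha|_{\alpha=1}$. Fixing $\alpha_0$ and setting $M_\alpha=\mathrm{diag}(\alpha_0/\alpha,1,\alpha/\alpha_0)$, which satisfies $ac=b^2$, I would combine the two facts above to obtain $M_\alpha\,\varphi_\alpha f\,M_\alpha^{-1}=\varphi_{\alpha_0}f$, that is $\varphi_\alpha f=M_\alpha^{-1}\varphi_{\alpha_0}f\,M_\alpha$, with $M_{\alpha_0}=\mathrm{Id}$ and $M_\alpha$ depending holomorphically on $\alpha$. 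By Proposition \ref{critere0} this forces $\mathfrak{m}(\varphi_\alpha f)=0$, so the family is holomorphically trivial.

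The step requiring the most care is the second one: assembling the $16\times16$ matrix correctly from Proposition \ref{coniquecomp} is bookkeeping-heavy and error-prone, in particular in keeping track of how the classes of $\mathcal{C}$ and $\Delta'$ enter, and in checking that the cyclotomic factors combine to degree $14$. By contrast the triviality is short once the semiconjugation $f\circ\mathrm{diag}(a,b,c)=\mathrm{diag}(c,a,b)\circ f$ is noticed, and the first assertion reduces to verifying the hypotheses already displayed before the theorem.
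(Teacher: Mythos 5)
Your proposal is correct in substance and follows the paper's own proof nearly step by step. For the dynamical degree you propose exactly the paper's computation: a $16\times 16$ matrix for $(\varphi_\alpha f)_*$ assembled from the component-swapping data of Proposition \ref{coniquecomp}, whose characteristic polynomial the paper computes to be $(X-1)^4(X+1)^2(X^2-X+1)(X^2+X+1)^3(X^2-3X+1)$ --- confirming your prediction of $(X^2-3X+1)$ times cyclotomic factors of total degree $14$ --- with analytic stability of automorphisms justifying that $\lambda$ is the spectral radius. For the triviality, the paper simply exhibits $M_\alpha=\mathrm{diag}(1,\alpha/\alpha_0,\alpha^2/\alpha_0^2)$ and asserts $\varphi_\alpha f=M_\alpha^{-1}\varphi_{\alpha_0}fM_\alpha$; your matrices $\mathrm{diag}(\alpha_0/\alpha,1,\alpha/\alpha_0)$ are the \emph{same} element of $\mathrm{PGL}(3;\mathbb{C})$ (multiply by the scalar $\alpha/\alpha_0$), and both of your supporting identities are correct: $f\circ\mathrm{diag}(a,b,c)=\mathrm{diag}(c,a,b)\circ f$ whenever $ac=b^2$, and $\varphi_\alpha=\mathrm{diag}(\alpha,1,\alpha^{-1})\,\varphi_1\,\mathrm{diag}(\alpha^2,1,\alpha)$. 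So this is a genuine derivation of the conjugating family that the paper writes down without explanation; it is the one place where your argument is more informative than the published proof.

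One bookkeeping point does need fixing. You state the orbit condition as $(\varphi_\alpha f)^3\varphi_\alpha\widehat{\xi}_2=\widehat{\xi}_1$ together with pairwise disjointness of the four sets $\widehat{\xi}_1,\varphi_\alpha\widehat{\xi}_2,\varphi_\alpha f\varphi_\alpha\widehat{\xi}_2,(\varphi_\alpha f)^2\varphi_\alpha\widehat{\xi}_2$. Since each of these sets has length $5$, that configuration would produce a surface with $20$ blown-up points and a Picard lattice of rank $21$ --- inconsistent with the $15$ points in the statement and with the rank-$16$ matrix you (and the paper) use in the spectral computation. The configuration the theorem actually uses consists of the three sets $\widehat{\xi}_1$, $\varphi_\alpha\widehat{\xi}_2$, $\varphi_\alpha f\varphi_\alpha\widehat{\xi}_2$, with the gluing condition $(\varphi_\alpha f)^2\varphi_\alpha\widehat{\xi}_2=\widehat{\xi}_1$; this is also what the parameterization $\psi(\alpha)=\big(\widehat{\zeta}_1,P,\varphi_\alpha(\widehat{\zeta}_2),\varphi_\alpha(R),\varphi_\alpha f\varphi_\alpha(\widehat{\zeta}_2),\varphi_\alpha f\varphi_\alpha(R)\big)$ into $S_{15}^{\dag}$ in \S\ref{deformations} records. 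You inherited the exponent $3$ from the paper's own discussion preceding the theorem (``we provide such matrices for $k=3$''), which is internally inconsistent with the theorem it introduces; but since your first and second steps must be consistent with each other --- the lifting condition determines the size of the Picard matrix --- your write-up should verify the gluing conditions for $(\varphi_\alpha f)^2\varphi_\alpha$ and disjointness of three supports, not four.
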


\begin{proof}
\noindent Set $\varphi=\varphi_\alpha.$ In the basis
\begin{align*}
&\{\Delta',\,\mathrm{E},\,
\mathrm{F},\,\mathrm{H},\,\mathrm{L},\,\mathrm{N},\,\varphi \mathrm{E},\,\varphi \mathrm{G},
\,\varphi \mathrm{K},\,\varphi \mathrm{M},\,\varphi \Omega,\,\varphi f\varphi\mathrm{E},\,\varphi f\varphi\mathrm{G},
\,\varphi f\varphi\mathrm{K},\,\varphi f\varphi\mathrm{M},\,\varphi f\varphi\Omega\}\end{align*} the matrix $M$ of $(\varphi f)_*$ is
$$\left[\begin{array}{cccccccccccccccc}
0 & 0 & 2 & 0 & 0 & 1 & 0 & 0 & 0 & 0 & 0 & 0 & 0 & 0 & 0 & 0\\
0 & 0 & 2 & 0 & 0 & 1 & 0 & 0 & 0 & 0 & 0 & 0 & 1 & 0 & 0 & 0\\
0 & 0 & 2 & 0 & 0 & 1 & 0 & 0 & 0 & 0 & 0 & 1 & 0 & 0 & 0 & 0\\
0 & 0 & 2 & 0 & 0 & 1 & 0 & 0 & 0 & 0 & 0 & 0 & 0 & 1 & 0 & 0\\
0 & 0 & 2 & 0 & 0 & 1 & 0 & 0 & 0 & 0 & 0 & 0 & 0 & 0 & 1 & 0\\
0 & 0 & 2 & 0 & 0 & 1 & 0 & 0 & 0 & 0 & 0 & 0 & 0 & 0 & 0 & 1\\
0 & 0 & -1 & 0 & 0 & -1 & 0 & 0 & 0 & 0 & 0 & 0 & 0 & 0 & 0 & 0\\
0 & 0 & -1 & 0 & 1 & -1 & 0 & 0 & 0 & 0 & 0 & 0 & 0 & 0 & 0 & 0\\
0 & 0 & -2 & 1 & 0 & -1 & 0 & 0 & 0 & 0 & 0 & 0 & 0 & 0 & 0 & 0\\
0 & 1 & -3 & 0 & 0 & -1 & 0 & 0 & 0 & 0 & 0 & 0 & 0 & 0 & 0 & 0\\
1 & 0 & -4 & 0 & 0 & -1 & 0 & 0 & 0 & 0 & 0 & 0 & 0 & 0 & 0 & 0\\
0 & 0 & 0 & 0 & 0 & 0 & 1 & 0 & 0 & 0 & 0 & 0 & 0 & 0 & 0 & 0\\
0 & 0 & 0 & 0 & 0 & 0 & 0 & 1 & 0 & 0 & 0 & 0 & 0 & 0 & 0 & 0\\
0 & 0 & 0 & 0 & 0 & 0 & 0 & 0 & 1 & 0 & 0 & 0 & 0 & 0 & 0 & 0\\
0 & 0 & 0 & 0 & 0 & 0 & 0 & 0 & 0 & 1 & 0 & 0 & 0 & 0 & 0 & 0\\
0 & 0 & 0 & 0 & 0 & 0 & 0 & 0 & 0 & 0 & 1 & 0 & 0 & 0 & 0 & 0
\end{array}\right].$$

\noindent Its characteristic polynomial is $(X-1)^4 (X+1)^2 (X^2-X+1) (X^2+X+1)^3 (X^2-3X+1).$ Hence $\lambda(\varphi f)=\frac{3+\sqrt{5}}{2}.$

\medskip

\noindent Fix a point $\alpha_0$ in $\mathbb{C}^*.$
We can find  locally around $\alpha_0$ a matrix $M_\alpha$ depending holomorphically on $\alpha$ such that for all $\alpha$ near $\alpha_0,$ we have $\varphi_\alpha f= M_{\alpha}^{-1}\varphi_{\alpha_0}fM_{\alpha}:$ take
$$M_\alpha=\left[\begin{array}{ccc} 1 & 0 & 0\\ 0 &\frac{\alpha}{\alpha_0}& 0 \\ 0 & 0 &\frac{\alpha^2}{\alpha_0^2}\end{array}\right].$$ This implies that $\varphi_\alpha f$ is holomorphically trivial.
\end{proof}

\section{Families of rational surfaces}\label{deformations}

\noindent Families of rational surfaces are usually constructed by blowing up $\mathbb{P}^2$ (or a Hirzebruch surface $\mathbb{F}_n$) successively at $N$ points $p_1, \ldots, p_N$ and then by deforming the points $p_i.$ Such deformations can be holomorphically trivial: the simplest example is given by the family $\mathrm{Bl}_{M_t p_1,\ldots,M_t p_N} \mathbb{P}^2,$ where $p_1, \ldots, p_N$ are $N$ distinct points in $\mathbb{P}^2$ and~$t \mapsto M_t$ is a holomorphic curve in $\mathrm{PGL}(3;\mathbb{C})$ such that $M_0=\mathrm{Id}.$ In this section, we give a general description of deformations of rational surfaces, using the general theory of Kodaira and Spencer (\cite{Kodaira}). Then, after a general digression about the generic numbers of parameters of an algebraic deformation, we will give a practical way to count the generic number of parameters of a given family of rational surfaces with no holomorphic vector field. As an application, for any family of birational maps which can be lifted to a family of rational surface automorphisms, we compare the generic number of parameters of this family (as defined in \S\ref{jabuse}) and the generic number of parameters of the associated family of rational surfaces.

\smallskip

\noindent This section can be read independently from the other ones (except \S\ref{nonbasic}), its aim is to provide in some specific cases a geometric interpretation of the generic number of parameters for families of Cremona transformations introduced in \S\ref{jabuse}.

\subsection{Deformations of basic rational surfaces} \label{def}

\par \medskip
\noindent Recall that every rational surface can be obtained by blowing up finitely many times $\mathbb{P}^2$ of a Hirzebruch surface~$\mathbb{F}_n$ (\emph{see} \cite[p. \!520]{G-H}). A rational surface is called \textit{basic} if it is a blowup of~$\mathbb{P}^2.$
By \cite[Th. 5]{Nag}, if $f$ is an automorphism of a rational surface $X$ such that $f^{*}$ is of infinite order on~$\mathrm{Pic}(X),$ then~$X$ is basic. Furthermore, by the main result of \cite{Harbourne}, $X$ carries no nonzero holomorphic vector field.
\par \medskip

\noindent For each integer $N,$ let us define a sequence of deformations $\pi_N\colon\mathfrak{X}_N \to S_N$ as follows:
\begin{itemize}

\item $S_0$ is a point and $\mathfrak{X}_0=\mathbb{P}^2.$

\item $S_{N+1}=\mathfrak{X}_N;$ $\mathfrak{X}_{N+1}=\mathrm{Bl}_{\mathfrak{X}_N}(\mathfrak{X}_N \times_{S_N} \mathfrak{X}_N),$ where $\mathfrak{X}_N$ is diagonally embedded in $\mathfrak{X}_N \times_{S_N} \mathfrak{X}_N;$ and $\pi_{N+1}$ is obtained by composing the blow up morphism from $\mathfrak{X}_{N+1}$ to $\mathfrak{X}_N \times_{S_N} \mathfrak{X}_N$ with the first projection.

\end{itemize}

\par\medskip

\noindent The varieties $S_N$ and $\mathfrak{X}_N$ are smooth and projective, they can be given the following geometric interpretation:
\begin{itemize}
\item For $N \geq 1,$ $S_N$ is the set of ordered lists of (possibly infinitely near) points of $\mathbb{P}^2$ of length $N.$ This means that $$S_N=\{p_1, \ldots, p_N \, \, \, \text{such that}\, \, \, p_1 \in \mathbb{P}^2\, \,  \text{and if} \, \,  2 \leq i \leq N, \,  p_i \in \mathrm{Bl}_{p_{i-1}} \mathrm{Bl}_{p_{i-2}} \ldots \mathrm{Bl}_{p_{1}} \mathbb{P}^2 \}.$$ Elements of $S_N$ will be denoted by $\widehat{\xi}.$

\item If $N \geq 1,$ $\mathfrak{X}_N$ is the universal family of rational surfaces over $S_N:$ for every $\widehat{\xi}$ in $S_N,$ the fiber $\pi_N^{-1}(\widehat{\xi}\,)$ of $\widehat{\xi}$ in $\mathfrak{X}_N$ is the rational surface $\mathrm{Bl}_{\widehat{\xi}}\, \mathbb{P}^2$ parameterized by $\widehat{\xi}.$
\end{itemize}

\par\medskip

\noindent The group $\mathrm{PGL}(3;\mathbb{C})$ of biholomorphisms of $\mathbb{P}^2$ acts naturally on the configuration spaces $S_N:$ if $g$ is an element of~$\mathrm{PGL}(3;\mathbb{C})$ and $\widehat{\xi}$ lies in $S_N,$ $g.\widehat{\xi}$ is the unique element of $S_N$ such that $g$ induces an isomor\-phism between $\mathrm{Bl}_{\widehat{\xi}}\, \mathbb{P}^2$ and~$\mathrm{Bl}_{g.\widehat{\xi}}\, \mathbb{P}^2.$ Then we have an easy but important fact:

\begin{lem}\label{idcomp}
{\sl Let $N$ be a positive integer, $\widehat{\xi}$ be an element of $S_N$ and $G_{\widehat{\xi}}$ be the stabilizer of $\,\widehat{\xi}$ in $\mathrm{PGL(3;\mathbb{C})}.$ Then the identity components of $G_{\widehat{\xi}}$ and of $\mathrm{Aut}(\mathrm{Bl}_{\widehat{\xi}} \mathbb{P}^2)$ are canonically isomorphic. In particular, the Lie algebra of holomorphic vector fields on $\mathrm{Bl}_{\widehat{\xi}}\mathbb{P}^2$ is canonically isomorphic to the Lie algebra of $G_{\widehat{\xi}}.$}
\end{lem}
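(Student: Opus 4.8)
The plan is to produce a canonical injective homomorphism $\Theta\colon G_{\widehat{\xi}}\to\mathrm{Aut}(X)$, where $X=\mathrm{Bl}_{\widehat{\xi}}\,\mathbb{P}^2$, and to show that it is onto the identity component. Let $\pi\colon X\to\mathbb{P}^2$ be the blow-down. Since the $\mathrm{PGL}(3;\mathbb{C})$-action on $S_N$ lifts to the universal family $\mathfrak{X}_N$, every $g\in G_{\widehat{\xi}}$ (which by definition satisfies $g.\widehat{\xi}=\widehat{\xi}$) induces an isomorphism of the fiber $X=(\mathfrak{X}_N)_{\widehat{\xi}}$ with itself, i.e. an element $\Theta(g)\in\mathrm{Aut}(X)$ with $\pi\circ\Theta(g)=g\circ\pi$; functoriality of the induced isomorphism makes $\Theta$ a holomorphic homomorphism. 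It is injective, for $\Theta(g)=\mathrm{id}$ gives $\pi=g\circ\pi$, hence $g=\mathrm{id}$ on the dense open set over which $\pi$ is an isomorphism. Thus $\Theta$ realizes $G_{\widehat{\xi}}$ as an immersed subgroup of $\mathrm{Aut}(X)$, and the statement reduces to the inclusion $\mathrm{Aut}(X)^{0}\subseteq\Theta(G_{\widehat{\xi}})$: this open connected subgroup is then the image of $G_{\widehat{\xi}}^{0}$, so $\Theta$ restricts to an isomorphism of identity components, and passing to Lie algebras yields the final clause via $\mathrm{Lie}(\mathrm{Aut}(X))=\mathrm{H}^{0}(X,\mathrm{T}X)$.

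I would prove the inclusion $\mathrm{Aut}(X)^{0}\subseteq\Theta(G_{\widehat{\xi}})$ by induction on $N$, in the spirit of Lemma \ref{minimal}. The base case is the identity $\mathrm{PGL}(3;\mathbb{C})=\mathrm{Aut}(\mathbb{P}^{2})$. For the inductive step, write $\widehat{\xi}=(p_{1},\dots,p_{N})$, $\widehat{\xi}'=(p_{1},\dots,p_{N-1})$ and $X'=\mathrm{Bl}_{\widehat{\xi}'}\,\mathbb{P}^{2}$, so that $X=\mathrm{Bl}_{p_{N}}X'$ with exceptional $(-1)$-curve $\mathrm{E}_{N}$. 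Given $f\in\mathrm{Aut}(X)^{0}$, the key observation is that $f^{*}$ is trivial on $\mathrm{Pic}(X)$: the representation $\mathrm{Aut}(X)\to\mathrm{GL}(\mathrm{Pic}(X))$ is continuous with discrete target, hence kills the connected group $\mathrm{Aut}(X)^{0}$. In particular $f$ preserves $[\mathrm{E}_{N}]$, and since $[\mathrm{E}_{N}]^{2}=-1$ this class carries a unique effective divisor: any effective $D$ with $[D]=[\mathrm{E}_{N}]$ satisfies $D\cdot\mathrm{E}_{N}<0$, so $\mathrm{E}_{N}$ is a component of $D$ and $D-\mathrm{E}_{N}\sim 0$, whence $D=\mathrm{E}_{N}$. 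Therefore $f(\mathrm{E}_{N})=\mathrm{E}_{N}$.

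Because $f$ preserves $\mathrm{E}_{N}$, it descends through the contraction $X\to X'$ to an automorphism $f'$ of $X'$ fixing the image point $p_{N}$. Choosing a path from $\mathrm{id}$ to $f$ inside the connected group $\mathrm{Aut}(X)^{0}$, every member of the path is trivial on $\mathrm{Pic}(X)$ and so descends likewise, giving a path from $\mathrm{id}$ to $f'$; hence $f'\in\mathrm{Aut}(X')^{0}$. By the induction hypothesis $f'$ is induced by some $g\in\mathrm{PGL}(3;\mathbb{C})$ stabilizing $\widehat{\xi}'$, and as $f'$ fixes $p_{N}$ the map $g$ stabilizes the full configuration, i.e. $g\in G_{\widehat{\xi}}$. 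Finally $\Theta(g)$ and $f$ both lift $f'$ along the blow-up at $p_{N}$, and such a lift is unique, so $\Theta(g)=f$, closing the induction.

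The main obstacle is exactly this descent step: one must guarantee that an element of the identity component genuinely preserves the last exceptional curve, which is where the triviality of the action on $\mathrm{Pic}(X)$ and the rigidity of the $(-1)$-curve $\mathrm{E}_{N}$ are essential. The infinitely near case requires no modification: even when $p_{N}$ is infinitely near the earlier points, $\mathrm{E}_{N}$ remains a single contractible $(-1)$-curve in $X$, and after contraction $p_{N}$ is merely a (possibly infinitely near) point of $X'$ fixed by $f'$, so the induction on the length $N$ runs verbatim.
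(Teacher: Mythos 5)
Your proof is correct and follows essentially the same route as the paper: induction on $N$, with the key step showing that an automorphism in the identity component must preserve the last exceptional $(-1)$-curve and hence descends through the final blowup, the identification with $G_{\widehat{\xi}}$ then following by uniqueness of lifts. The only (immaterial) differences are in packaging: the paper deduces $u(\mathrm{E})=\mathrm{E}$ from the deformation invariance of the single intersection number $\mathrm{E}\cdot u(\mathrm{E})$ together with nonnegativity of intersections of distinct irreducible curves, whereas you use triviality of the whole $\mathrm{Pic}$-representation on $\mathrm{Aut}(X)^{0}$ and uniqueness of the effective divisor in the class $[\mathrm{E}_N]$.
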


\begin{proof}
\noindent The group $G_{\widehat{\xi}}$ is clearly a subgroup of $\mathrm{Aut}(\mathrm{Bl}_{\widehat{\xi}} \mathbb{P}^2)$. We write $\widehat{\xi}=\widehat{\xi}' \cup \{p\},$ where $\widehat{\xi}'$ is in $S_{N-1}$ and $p$ is in~$\mathrm{Bl}_{\widehat{\xi}'}\mathbb{P}^2;$ and we denote by $E$ the exceptional divisor of the blowup of $\mathrm{Bl}_{\widehat{\xi}'}\mathbb{P}^2$ at $p.$ Then for any $u$ in $\mathrm{Aut}(\mathrm{Bl}_{\widehat{\xi}}\mathbb{P}^2),$ the intersection number $E\cdot u(E)$ depends only on the connected component of $u$ in the automorphism group of $\mathrm{Bl}_{\widehat{\xi}}\mathbb{P}^2.$ In particular, if $u$ is in the identity component of this group, $E\cdot u(E)=E\cdot E=-1.$ Since $u(E)$ is an irreducible curve on $\mathrm{Bl}_{\widehat{\xi}}\mathbb{P}^2,$ this implies that $u(E)=E$ (otherwise the intersection number $E\cdot u(E)$ would be nonnegative) so that $u$ is induced by an automorphism of $\mathrm{Bl}_{\widehat{\xi}'}\mathbb{P}^2.$ Therefore, if $j \colon \mathrm{Aut}(\mathrm{Bl}_{\widehat{\xi}'} \mathbb{P}^2) \rightarrow \mathrm{Aut}(\mathrm{Bl}_{\widehat{\xi}} \mathbb{P}^2)$ is the natural injection, the image of $j$ contains the identity component of $\mathrm{Aut}(\mathrm{Bl}_{\widehat{\xi}} \mathbb{P}^2).$ It follows that $j$ induces an isomorphism between the identity component of $\mathrm{Aut}(\mathrm{Bl}_{\widehat{\xi}'} \mathbb{P}^2)$ and the identity component of $\mathrm{Aut}(\mathrm{Bl}_{\widehat{\xi}} \mathbb{P}^2).$ By applying repeatedly this argument, we get that the identity component of $G_{\widehat{\xi}}$ and of $\mathrm{Aut}(\mathrm{Bl}_{\widehat{\xi}} \mathbb{P}^2)$ are isomorphic.
\end{proof}

\noindent In the sequel, for every integer $N \geq 4,$ we will denote by $S_N^{\dag}$ the Zariski-dense open subset of $S_N$ consisting of points~$\widehat{\xi}$ in $S_N$ such that $G_{\widehat{\xi}}$ is trivial. The associated rational surfaces $\{\mathrm{Bl}_{\widehat{\xi}} \, \mathbb{P}^2, \, \widehat{\xi} \in S_N^{\dag}\}$ are rational surfaces in the family $\mathfrak{X}_N$ carrying no nonzero holomorphic vector field. Besides, the action of $\mathrm{PGL(3;\mathbb{C})}$ defines a regular foliation on $S_N^{\dag}.$

\par\medskip
\noindent For any point $\widehat{\xi}$ in $S_N,$ let $O_{\widehat{\xi}}$ be the $\mathrm{PGL(3;\mathbb{C})}$-orbit of $\widehat{P}$ in $S_N.$ The main result of this section is:

\begin{thm} \label{easy}
{\sl Let $N$ be a positive integer. For any point $\widehat{\xi}$ in $S_N,$ the Kodaira-Spencer map of $\mathfrak{X}_N$ at $\widehat{\xi}$ is surjective and its kernel is equal to $\mathrm{T}_{\widehat{\xi}} \, O_{\widehat{\xi}}.$}
\end{thm}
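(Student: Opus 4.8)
The plan is to argue by induction on $N$, exploiting the recursive definition $S_{N+1}=\mathfrak{X}_N$ and $\mathfrak{X}_{N+1}=\mathrm{Bl}_{\mathfrak{X}_N}(\mathfrak{X}_N\times_{S_N}\mathfrak{X}_N)$. The base case is the $\mathbb{P}^2$ case ($N=0$), where $S_0$ is a point and $\mathfrak{X}_0=\mathbb{P}^2$; there the statement reduces to $\mathrm{H}^1(\mathbb{P}^2,\mathrm{T}\mathbb{P}^2)=0$ together with the transitivity of the $\mathrm{PGL}(3;\mathbb{C})$-action. For the inductive step I fix $\widehat{\xi}\in S_{N+1}$ and write $\widehat{\xi}=\widehat{\xi}'\cup\{p\}$ with $\widehat{\xi}'\in S_N$ and $p\in Y:=\mathrm{Bl}_{\widehat{\xi}'}\mathbb{P}^2$, so that $p$ is exactly the point of $S_{N+1}=\mathfrak{X}_N$ we started from. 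Writing $X=\mathrm{Bl}_{\widehat{\xi}}\mathbb{P}^2=\mathrm{Bl}_pY$ and $\rho\colon X\to Y$ for the last blowdown, the whole argument consists in comparing $\kappa:=\mathrm{KS}_{\widehat{\xi}}(\mathfrak{X}_{N+1})$ with $\kappa':=\mathrm{KS}_{\widehat{\xi}'}(\mathfrak{X}_N)$, to which the inductive hypothesis applies.

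The technical heart is the exact sequence of the blowdown. From $\mathrm{T}X\hookrightarrow\rho^{*}\mathrm{T}Y$ one gets $0\to\mathrm{T}X\to\rho^{*}\mathrm{T}Y\to\mathcal{F}\to 0$, and a local computation in the two standard charts of the blowup identifies $\mathcal{F}$ with $i_{*}\mathcal{O}_E(1)$, where $E$ is the exceptional curve; in particular $\mathrm{H}^0(\mathcal{F})\cong\mathrm{T}_pY$ and $\mathrm{H}^1(\mathcal{F})=0$. Since $\rho_{*}\mathcal{O}_X=\mathcal{O}_Y$ and $R^{q}\rho_{*}\mathcal{O}_X=0$ for $q>0$, the projection formula gives $\mathrm{H}^{i}(X,\rho^{*}\mathrm{T}Y)=\mathrm{H}^{i}(Y,\mathrm{T}Y)$, so the long exact sequence reads
\[ 0\to\mathrm{H}^0(X,\mathrm{T}X)\to\mathrm{H}^0(Y,\mathrm{T}Y)\xrightarrow{\ \mathrm{ev}_p\ }\mathrm{T}_pY\xrightarrow{\ \delta\ }\mathrm{H}^1(X,\mathrm{T}X)\to\mathrm{H}^1(Y,\mathrm{T}Y)\to 0, \]
where $\mathrm{ev}_p$ is the evaluation of a vector field at $p$. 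Thus $\mathrm{H}^1(X,\mathrm{T}X)\to\mathrm{H}^1(Y,\mathrm{T}Y)$ is onto, its kernel is $\mathrm{im}\,\delta$, and $\ker\delta=\mathrm{im}\,\mathrm{ev}_p$.

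I would then assemble a commutative diagram with exact rows, the top one being the differential of the fibration $\mathfrak{X}_N\to S_N$, namely $0\to\mathrm{T}_pY\to\mathrm{T}_p\mathfrak{X}_N\xrightarrow{d\pi_N}\mathrm{T}_{\widehat{\xi}'}S_N\to 0$, and the bottom one $0\to\mathrm{im}\,\delta\to\mathrm{H}^1(X,\mathrm{T}X)\to\mathrm{H}^1(Y,\mathrm{T}Y)\to 0$, with vertical arrows $\delta$, $\kappa$ and $\kappa'$. The right square commutes by functoriality of the Kodaira--Spencer map applied to the fibrewise blowdown $\mathfrak{X}_{N+1}\to\pi_N^{*}\mathfrak{X}_N$, together with $\mathrm{KS}(\pi_N^{*}\mathfrak{X}_N)=\kappa'\circ d\pi_N$. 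The left square expresses that the restriction of $\kappa$ to the vertical directions $\mathrm{T}_pY$ equals $\delta$, i.e. that the Kodaira--Spencer map of the universal family of blowups of a fixed surface at a moving point is the connecting homomorphism $\delta$. \emph{This identification is the main obstacle}, and is precisely where the deformation theory of the blowdown morphism (in the spirit of Horikawa, \cite{Ho}) is needed; everything else is formal. Granting it, the snake lemma yields at once the surjectivity of $\kappa$ (since $\kappa'$ is onto by induction and $\delta$ is onto its image, so both cokernels vanish) and a short exact sequence $0\to\ker\delta\to\ker\kappa\to\ker\kappa'\to 0$.

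It remains to identify $\ker\kappa$ with $\mathrm{T}_{\widehat{\xi}}O_{\widehat{\xi}}$. By induction $\ker\kappa'=\mathrm{T}_{\widehat{\xi}'}O_{\widehat{\xi}'}$. On the orbit side, the orbit map $\mathrm{PGL}(3;\mathbb{C})\to\mathfrak{X}_N$, $g\mapsto g\cdot p$, projects under $d\pi_N$ to the orbit map of $\widehat{\xi}'$, so $d\pi_N$ sends $\mathrm{T}_{\widehat{\xi}}O_{\widehat{\xi}}$ onto $\mathrm{T}_{\widehat{\xi}'}O_{\widehat{\xi}'}$, and the kernel $\mathrm{T}_{\widehat{\xi}}O_{\widehat{\xi}}\cap\mathrm{T}_pY$ consists of the values at $p$ of the fundamental vector fields associated with $\mathrm{Lie}(G_{\widehat{\xi}'})$. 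By Lemma \ref{idcomp} these fundamental vector fields are exactly the global holomorphic vector fields on $Y$, hence $\mathrm{T}_{\widehat{\xi}}O_{\widehat{\xi}}\cap\mathrm{T}_pY=\mathrm{im}\,\mathrm{ev}_p=\ker\delta$; so $\mathrm{T}_{\widehat{\xi}}O_{\widehat{\xi}}$ fits in an exact sequence $0\to\ker\delta\to\mathrm{T}_{\widehat{\xi}}O_{\widehat{\xi}}\to\mathrm{T}_{\widehat{\xi}'}O_{\widehat{\xi}'}\to 0$ with the same outer terms as the one for $\ker\kappa$. Finally, because the lifted $\mathrm{PGL}(3;\mathbb{C})$-action makes $\mathfrak{X}_{N+1}$ equivariant, the family is infinitesimally trivial along orbit directions, so $\mathrm{T}_{\widehat{\xi}}O_{\widehat{\xi}}\subseteq\ker\kappa$; comparing dimensions in the two exact sequences forces the equality $\mathrm{T}_{\widehat{\xi}}O_{\widehat{\xi}}=\ker\kappa$, which closes the induction.
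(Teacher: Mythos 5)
Your overall architecture is sound, and it is genuinely different from the paper's proof. The paper never touches the blowdown exact sequence: it obtains surjectivity of $\mathrm{KS}_{\widehat{\xi}}(\mathfrak{X}_N)$ by combining the completeness of $\mathfrak{X}_N$ at every point (Proposition \ref{horik}, the Fujiki--Nakano/Horikawa theorem, applied inductively from $\mathfrak{X}_0=\mathbb{P}^2$) with the vanishing $\mathrm{h}^2(X,\mathrm{T}X)=0$ of Lemma \ref{vanish}, via the Kodaira--Spencer theorems of existence and completeness; the kernel is then identified by a pure dimension count: $\mathrm{T}_{\widehat{\xi}}\,O_{\widehat{\xi}}\subseteq\ker \mathrm{KS}_{\widehat{\xi}}(\mathfrak{X}_N)$ by equivariance, $\dim \mathrm{T}_{\widehat{\xi}}\,O_{\widehat{\xi}}=8-\mathrm{h}^0(X,\mathrm{T}X)$ by Lemma \ref{idcomp}, and $\dim\ker \mathrm{KS}_{\widehat{\xi}}(\mathfrak{X}_N)=2N-\mathrm{h}^1(X,\mathrm{T}X)=8-\mathrm{h}^0(X,\mathrm{T}X)$ by the first assertion of Lemma \ref{vanish}. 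Your induction, by contrast, re-proves that dimension formula along the way (via $\mathcal{F}\cong i_*\mathcal{O}_E(1)$, whose identification, together with $\mathrm{H}^0(\mathcal{F})\cong \mathrm{T}_pY$ and $\mathrm{ev}_p$, is correct) and needs neither completeness nor the existence theorem; in exchange it produces finer information, namely an explicit description of $\mathrm{KS}_{\widehat{\xi}}(\mathfrak{X}_{N+1})$ in terms of the connecting homomorphism $\delta$ and of $\mathrm{KS}_{\widehat{\xi}'}(\mathfrak{X}_N)$. Your snake-lemma step, the orbit analysis (surjectivity of $d\pi_N$ on orbit tangent spaces, and $\mathrm{T}_{\widehat{\xi}}O_{\widehat{\xi}}\cap \mathrm{T}_pY=\mathrm{im}\,\mathrm{ev}_p=\ker\delta$ via Lemma \ref{idcomp}), and the final dimension comparison are all correct and purely formal.

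The caveat is the one you flag yourself: the commutativity of the left square --- that the Kodaira--Spencer map of the family $\mathrm{Bl}_qY$, $q\in Y$, equals $\delta$ at $q=p$ --- together with the functoriality of $\mathrm{KS}$ under the fibrewise blowdown $\mathfrak{X}_{N+1}\to\pi_N^{*}\mathfrak{X}_N$, is assumed rather than proven. Both statements are true, and they are essentially the content of the deformation theory of blowups in \cite{FN, Ho}; but they are also where all the analytic substance of the theorem sits, so your proof is complete only up to the same external input that the paper consumes through Proposition \ref{horik}. The difference is that the paper's packaging (completeness plus $\mathrm{h}^2=0$ gives surjectivity, then count dimensions) allows it to quote that input as a black box, whereas your route requires the cocycle-level identification of the KS map, which you would have to either compute directly (a \v{C}ech computation in the two standard charts of the blowup, differentiating the gluing of $\mathrm{Bl}_qY$ with respect to $q$) or extract explicitly from \cite{Ho} before the argument closes.
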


\noindent Before giving the proof, we start by some generalities. Let $(\mathfrak{X}, \pi, B)$ be a deformation and $b$ be a point in $B.$ Recall that~$\mathfrak{X}$ is complete at $b$ if any small deformation of $\mathfrak{X}_b$ is locally induced by $\mathfrak{X}$ via a holomorphic map. Let us quote two fundamental results in deformation theory (\emph{see} \cite[p. \!$270$ and $284$]{Kodaira}):

\begin{enumerate}
\item [(i)] \textit{Theorem of existence.} Let $X$ be a complex compact manifold such that $\mathrm{H}^{2}(X, \mathrm{T}X)=0.$ Then there exists a deformation $(\mathfrak
{X}, \pi, B)$ of $X$ such that $\mathfrak{X}_0=X$ and $\mathrm{KS}_0(\mathfrak{X}):\mathrm{T}_0 B \rightarrow \mathrm{H}^1(X, \mathrm{T}X)$ is an isomorphism.

\item [(ii)] \textit{Theorem of completeness.} Let $(\mathfrak{X}, \pi, B)$ be a deformation and $b$ be in $B$ such that $\mathrm{KS}_b(\mathfrak{X}) \colon \mathrm{T}_b B \rightarrow \mathrm{H}^1(\mathfrak{X}_b, \mathrm{T}\mathfrak{X}_b)$ is surjective. Then $\mathfrak{X}$ is complete at $b.$
\end{enumerate}

\noindent As a consequence, if $(\mathfrak{X}, \pi, B)$ is a deformation which is complete at a point $b$ of $B$ and such that $\mathrm{H}^{2}(\mathfrak{X}_b, \mathrm{T} \mathfrak{X}_b)=0,$ then~$\mathrm{KS}_b(\mathfrak{X})$ is surjective.

\begin{defi}\label{blowup}
Let $(\mathfrak{X}, \pi, B)$ be a deformation. The {\it blown up deformation} $\widehat{\mathfrak{X}}$ is a deformation over $\mathfrak{X}$ defined by~$\widehat{\mathfrak{X}}=\mathrm{Bl}_\mathfrak{X}(\mathfrak{X}\times_B\mathfrak{X}),$ where $\mathfrak{X}$ is diagonally embedded in $\mathfrak{X}\times_B\mathfrak{X}$ and the projection from $\widehat{\mathfrak{X}}$ to $\mathfrak{X}$ is induced by the projection on the first factor.
\end{defi}

\noindent Thus, for any $x$ in $\mathfrak{X},$ $\widehat{\mathfrak{X}}_x=\mathrm{Bl}_x( \mathfrak{X}_b),$ where $b=\pi(x).$
The following result is due originally to Fujiki and Nakano and in a more general setting to Horikawa:

\begin{pro} [\cite{FN, Ho}]\label{horik}
{\sl Let $(\mathfrak{X}, \pi, B)$ be a deformation, $b$ be a point of $B$ and assume that $\mathfrak{X}$ is complete at $b.$ Then the blown up deformation $\widehat{\mathfrak{X}}$ is complete at any point of $\, \mathfrak{X}_b.$}
\end{pro}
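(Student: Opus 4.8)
The plan is to deduce completeness of $\widehat{\mathfrak{X}}$ at a point $x \in \mathfrak{X}_b$ from the completeness theorem (ii) quoted above: it suffices to show that the Kodaira--Spencer map $\mathrm{KS}_x(\widehat{\mathfrak{X}}) \colon \mathrm{T}_x \mathfrak{X} \to \mathrm{H}^1(\widehat{\mathfrak{X}}_x, \mathrm{T}\widehat{\mathfrak{X}}_x)$ is surjective, where $\widehat{\mathfrak{X}}_x = \mathrm{Bl}_x S$ with $S = \mathfrak{X}_b$. Since $\pi$ is a submersion, the base $\mathfrak{X}$ carries at $x$ the exact sequence $0 \to \mathrm{T}_x S \to \mathrm{T}_x \mathfrak{X} \xrightarrow{d\pi} \mathrm{T}_b B \to 0$, whose two pieces record respectively the motion of the center of the blowup inside $S$ and the deformation of $S$ itself. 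The strategy is to match this splitting with a corresponding exact sequence for $\mathrm{H}^1(\mathrm{Bl}_x S, \mathrm{T})$ and to chase a commutative ladder.

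For the cohomological input I would write $p \colon \widetilde{S} = \mathrm{Bl}_x S \to S$ for the blowdown and $E = p^{-1}(x)$. A local computation shows that a vector field on $S$ lifts through $p$ exactly when it vanishes at $x$, so $p_* \mathrm{T}\widetilde{S} = \mathcal{I}_x \, \mathrm{T}S$; moreover $\mathrm{T}\widetilde{S}|_E$ is an extension of $\mathcal{O}_{\mathbb{P}^1}(-1)$ by $\mathcal{O}_{\mathbb{P}^1}(2)$, whose $\mathrm{H}^1$ vanishes, so the theorem on formal functions gives $\mathrm{R}^1 p_* \mathrm{T}\widetilde{S} = 0$. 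The Leray spectral sequence then yields $\mathrm{H}^1(\widetilde{S}, \mathrm{T}\widetilde{S}) \cong \mathrm{H}^1(S, \mathcal{I}_x \mathrm{T}S)$, and the ideal sequence $0 \to \mathcal{I}_x \mathrm{T}S \to \mathrm{T}S \to \mathrm{T}S|_x \to 0$ produces the fundamental exact sequence $\mathrm{H}^0(S, \mathrm{T}S) \xrightarrow{\mathrm{ev}_x} \mathrm{T}_x S \xrightarrow{\delta} \mathrm{H}^1(\widetilde{S}, \mathrm{T}\widetilde{S}) \xrightarrow{p_*} \mathrm{H}^1(S, \mathrm{T}S) \to 0$.

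I would then assemble the commutative ladder whose top row is the relative tangent sequence above and whose bottom row is $0 \to \mathrm{coker}(\mathrm{ev}_x) \xrightarrow{\delta} \mathrm{H}^1(\widetilde{S}, \mathrm{T}\widetilde{S}) \xrightarrow{p_*} \mathrm{H}^1(S, \mathrm{T}S) \to 0$, the vertical arrows being the natural projection $\mathrm{T}_x S \twoheadrightarrow \mathrm{coker}(\mathrm{ev}_x)$, the map $\mathrm{KS}_x(\widehat{\mathfrak{X}})$, and $\mathrm{KS}_b(\mathfrak{X})$. The right square commutes by functoriality of the Kodaira--Spencer map under the fibrewise blowdown $\widehat{\mathfrak{X}} \to \mathfrak{X} \times_B \mathfrak{X} \to \mathfrak{X}$. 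Granting commutativity, the left vertical arrow is onto by construction, the right one is onto because $\mathfrak{X}$ is complete at $b$ (when $\mathrm{H}^2(S, \mathrm{T}S) = 0$ this is exactly the surjectivity remark recorded after the completeness theorem; in general one feeds a first-order deformation with prescribed class into the completeness property), and the snake lemma then forces $\mathrm{KS}_x(\widehat{\mathfrak{X}})$ to be surjective.

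The main obstacle is the commutativity of the left square, that is, the identification of the vertical part of $\mathrm{KS}_x(\widehat{\mathfrak{X}})$ with the connecting map $\delta$: concretely one must check that deforming $\mathrm{Bl}_x S$ by moving its center in a direction $v \in \mathrm{T}_x S$ has Kodaira--Spencer class $\delta(v)$, and in particular that moving the center along the value at $x$ of a global holomorphic vector field on $S$ produces a trivial deformation of $\mathrm{Bl}_x S$, trivialized by the flow of that field. This is precisely the relation $\mathrm{im}(\mathrm{ev}_x) = \ker \delta$ and is where the geometry of the blowup genuinely enters; by contrast the remaining ingredients (the local lifting criterion for vector fields and the computation of $\mathrm{T}\widetilde{S}|_E$) are routine.
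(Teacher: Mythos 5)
First, a point of comparison: the paper does not prove Proposition \ref{horik} at all --- it is quoted from Fujiki--Nakano and Horikawa --- so your attempt has to be judged on its own merits and against the arguments in those references, not against an internal proof.

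Your ladder (top row $0 \to \mathrm{T}_x S \to \mathrm{T}_x\mathfrak{X} \to \mathrm{T}_b B \to 0$, bottom row $0 \to \mathrm{coker}(\mathrm{ev}_x) \to \mathrm{H}^1(\widetilde{S},\mathrm{T}\widetilde{S}) \to \mathrm{H}^1(S,\mathrm{T}S) \to 0$) and the diagram chase are fine, and the sheaf-theoretic identifications ($p_*\mathrm{T}\widetilde{S}=\mathcal{I}_x\mathrm{T}S,$ $\mathrm{R}^1p_*\mathrm{T}\widetilde{S}=0,$ the four-term exact sequence) are correct for surface fibers. The genuine gap is the surjectivity of the right-hand vertical arrow $\mathrm{KS}_b(\mathfrak{X}).$ Completeness of $\mathfrak{X}$ at $b$ yields this surjectivity only under the additional hypothesis $\mathrm{H}^2(\mathfrak{X}_b,\mathrm{T}\mathfrak{X}_b)=0$ (this is exactly the remark recorded in the paper after the theorem of completeness, and it uses the theorem of existence). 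Your parenthetical fix for the general case --- ``one feeds a first-order deformation with prescribed class into the completeness property'' --- does not work: completeness in the Kodaira--Spencer framework is a statement about genuine families over smooth bases, and when $\mathrm{H}^2\neq 0$ an obstructed class in $\mathrm{H}^1$ need not be the Kodaira--Spencer class of any such family, so completeness imposes nothing on it. (For instance, if $X$ has Kuranishi space a reduced point but $\mathrm{H}^1(X,\mathrm{T}X)\neq 0,$ the product family $X\times B$ is complete at every point while all its Kodaira--Spencer maps vanish.) Moreover the failure is fatal to the whole strategy, not just to one step: your own ladder gives $\mathrm{im}\,\mathrm{KS}_x(\widehat{\mathfrak{X}})=p_*^{-1}\bigl(\mathrm{im}\,\mathrm{KS}_b(\mathfrak{X})\bigr),$ so if $\mathrm{KS}_b(\mathfrak{X})$ is not surjective then neither is $\mathrm{KS}_x(\widehat{\mathfrak{X}}),$ and the theorem of completeness can never be invoked --- yet the proposition remains true. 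This is why the cited proofs proceed differently: for surfaces one can use Kodaira's stability theorem for exceptional curves to blow down an arbitrary small deformation of $\mathrm{Bl}_x\mathfrak{X}_b$ fibrewise, obtaining a deformation of $\mathfrak{X}_b$ together with a section, map it into $\mathfrak{X}$ by completeness at $b,$ and blow back up along the section; Horikawa's general argument runs through his theory of deformations of holomorphic maps.

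Two lesser points. First, your cohomological computation is surface-specific ($E\cong\mathbb{P}^1,$ the extension of $\mathcal{O}_{\mathbb{P}^1}(-1)$ by $\mathcal{O}_{\mathbb{P}^1}(2)$), while the proposition is stated for fibers of arbitrary dimension; this is harmless here, since every application in the paper concerns surfaces, and the computation does generalize. Second, the left-square commutativity, which you rightly single out as the geometric heart, is not ``precisely the relation $\mathrm{im}(\mathrm{ev}_x)=\ker\delta$'': that relation is automatic from the long exact sequence. What actually has to be proved is that the restriction of $\mathrm{KS}_x(\widehat{\mathfrak{X}})$ to $\mathrm{T}_x\mathfrak{X}_b$ coincides with the connecting map $\delta$ --- a \v{C}ech-cocycle computation (trivialize the family $\{\mathrm{Bl}_{x+tv}U\}_t$ over a coordinate ball by translations and compare the resulting transition cocycle with the one defining $\delta$); you name this check but do not carry it out. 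With $\mathrm{H}^2(\mathfrak{X}_b,\mathrm{T}\mathfrak{X}_b)=0$ added as a hypothesis and these points filled in, your argument becomes correct, and it would in fact suffice for every use of Proposition \ref{horik} in this paper, since rational and Hirzebruch surfaces satisfy $\mathrm{h}^2(X,\mathrm{T}X)=0$ by Lemma \ref{vanish}; but as written it proves a strictly weaker statement than the one cited from \cite{FN, Ho}.
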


\noindent Remark that for every integer $N,$ $\widehat{\mathfrak{X}}_N=\mathfrak{X}_{N+1}.$ Since $\mathfrak{X}_0$ is complete, it follows by induction that for every integer $N,$ $\mathfrak{X}_N$ is complete at any point of $S_N.$

\begin{lem}\label{vanish}
{\sl Let $X$ be a rational surface obtained from the projective plane $\mathbb{P}^2$ via $N_{+}$ blow up and $N_{-}$ blow down. If $N=N_{+}-N_{-},$ then:
\begin{itemize}
\item $\mathrm{h}^1(X,\mathrm{T}X)=\mathrm{h}^0(X,\mathrm{T}X)+2N-8;$
\item $\mathrm{h}^2(X,\mathrm{T}X)=0.$
\end{itemize}}

\end{lem}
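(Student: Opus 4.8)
The plan is to obtain the first identity from the second together with a Riemann--Roch computation of the holomorphic Euler characteristic $\chi(X,\mathrm{T}X)=\mathrm{h}^0(X,\mathrm{T}X)-\mathrm{h}^1(X,\mathrm{T}X)+\mathrm{h}^2(X,\mathrm{T}X)$: once $\mathrm{h}^2=0$ is known, the relation $\mathrm{h}^1=\mathrm{h}^0+\mathrm{h}^2-\chi$ reduces everything to evaluating $\chi(X,\mathrm{T}X)$.

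First I would compute $\chi(X,\mathrm{T}X)$ by Hirzebruch--Riemann--Roch, which for the rank-two bundle $\mathrm{T}X$ on a surface reads $\chi(X,\mathrm{T}X)=\tfrac{1}{6}\bigl(7\,c_1(X)^2-5\,c_2(X)\bigr)$, with $c_1(X)^2=K_X^2$ and $c_2(X)=e(X)$ the topological Euler number. The key point is that these two numbers are governed by $N$: blowing up a point replaces $K^2$ by $K^2-1$ and $e$ by $e+1$, while a blow down does the reverse. Starting from $\mathbb{P}^2$, where $K^2=9$ and $e=3$, after $N_{+}$ blow ups and $N_{-}$ blow downs one gets $c_1(X)^2=9-N$ and $c_2(X)=3+N$, whence $\chi(X,\mathrm{T}X)=\tfrac{1}{6}\bigl(7(9-N)-5(3+N)\bigr)=8-2N$. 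This already yields the first bullet modulo the vanishing of $\mathrm{h}^2$.

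The substantial point is the second bullet. For a single blow up $\pi\colon\widetilde{X}=\mathrm{Bl}_p X\to X$ with exceptional curve $E\cong\mathbb{P}^1$, I would use the exact sequence $0\to\mathrm{T}\widetilde{X}\to\pi^{*}\mathrm{T}X\to i_{*}\mathcal{O}_E(1)\to 0$ coming from the local expression of $d\pi$ (in a chart $(s,t)\mapsto(s,st)$ the differential has determinant $s$, vanishing to order one along $E$, with cokernel a degree-one line bundle on $E$). Pushing this sequence forward and using $R^1\pi_{*}\mathcal{O}_{\widetilde{X}}=0$ and $\mathrm{H}^1(E,\mathcal{O}_E(1))=0$, one gets $R^q\pi_{*}\mathrm{T}\widetilde{X}=0$ for $q\geq 1$ together with a short exact sequence $0\to\pi_{*}\mathrm{T}\widetilde{X}\to\mathrm{T}X\to\mathbb{C}^2_p\to 0$, where $\mathbb{C}^2_p$ is the skyscraper given by evaluation of vector fields at $p$. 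The Leray spectral sequence then collapses to $\mathrm{H}^2(\widetilde{X},\mathrm{T}\widetilde{X})\cong\mathrm{H}^2(X,\pi_{*}\mathrm{T}\widetilde{X})$, and since the skyscraper has no higher cohomology the last sequence gives $\mathrm{H}^2(X,\pi_{*}\mathrm{T}\widetilde{X})\cong\mathrm{H}^2(X,\mathrm{T}X)$. Combining these, $\mathrm{H}^2(\widetilde{X},\mathrm{T}\widetilde{X})\cong\mathrm{H}^2(X,\mathrm{T}X)$.

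Finally I would exploit that this last isomorphism is valid in both directions. Since $X$ is rational it is joined to $\mathbb{P}^2$ by a finite chain of blow ups and blow downs; reading the isomorphism forward along each blow up and backward along each blow down propagates the vanishing $\mathrm{H}^2(\mathbb{P}^2,\mathrm{T}\mathbb{P}^2)=0$ to $\mathrm{H}^2(X,\mathrm{T}X)=0$. Together with $\chi(X,\mathrm{T}X)=8-2N$ this gives $\mathrm{h}^1(X,\mathrm{T}X)=\mathrm{h}^0(X,\mathrm{T}X)+2N-8$. The main obstacle is exactly the cohomological comparison under blow up: one must verify that it is a genuine isomorphism, and not merely a one-sided relation, so that it can be run backwards through the blow downs needed to reach a possibly non-basic rational surface.
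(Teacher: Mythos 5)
Your proof is correct, and it is a genuine proof where the paper gives none: the paper's entire ``proof'' of this lemma is a citation to Kodaira's book (p.~220), where the cohomology of the tangent sheaf of blown-up surfaces is computed by essentially the same mechanism you use. Both pillars of your argument check out. For the Euler characteristic, Hirzebruch--Riemann--Roch indeed gives $\chi(X,\mathrm{T}X)=\tfrac{1}{6}\bigl(7c_1(X)^2-5c_2(X)\bigr)$, and since each blow up lowers $K^2$ by one and raises $e$ by one (each blow down doing the opposite), one gets $c_1^2=9-N$, $c_2=3+N$ and $\chi=8-2N$. For the vanishing of $\mathrm{h}^2$, your sequence $0\to\mathrm{T}\widetilde X\to\pi^{*}\mathrm{T}X\to i_{*}\mathcal{O}_E(1)\to 0$ is the right one (the cokernel is a line bundle on $E$ whose restriction is $\mathcal{O}_E^{\oplus 2}/\mathcal{O}_E(-1)\cong\mathcal{O}_E(1)$), its pushforward gives $R^{q}\pi_{*}\mathrm{T}\widetilde X=0$ for $q\geq 1$ and the sequence $0\to\pi_{*}\mathrm{T}\widetilde X\to\mathrm{T}X\to T_pX\otimes\mathbb{C}_p\to 0$, where the map to the skyscraper is evaluation at $p$ and hence surjective on stalks; Leray and the acyclicity of skyscrapers then give the isomorphism $\mathrm{H}^2(\widetilde X,\mathrm{T}\widetilde X)\cong\mathrm{H}^2(X,\mathrm{T}X)$. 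Your closing caveat is exactly the right point of care, and your argument does meet it: because this comparison is an isomorphism rather than a one-sided map, it can be read backwards across the $N_{-}$ blow downs in the chain joining $\mathbb{P}^2$ to $X$, which the statement requires since it allows chains with blow downs (in particular non-basic rational surfaces). As a consistency check, the pushforward sequences also show directly that $\chi$ drops by $2$ under each blow up, matching the Riemann--Roch count.
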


\begin{proof}
\emph{See} \cite[p. \!$220.$]{Kodaira}.
\end{proof}

\noindent We can now prove Theorem \ref{easy}.

\par \medskip

\begin{proof}[Proof of Theorem \ref{easy}]
Let $N$ be a positive integer and $\widehat{\xi}$ be a point in $S_N.$ The second statement of Lemma \ref{vanish} together with the completeness of $\mathfrak{X}_N$ implies that the Kodaira-Spencer map of $\mathfrak{X}_N$ is surjective at $\widehat{\xi}.$
Since the restriction of $\mathfrak{X}_{N}$ on $\mathcal{O}_{\widehat{\xi}}$ is trivial,~$\ker \mathrm{KS}_{\widehat{\xi}}( \mathfrak{X}_N)$ contains $\mathrm{T}_{\widehat{\xi}}\, \mathcal{O}_{\widehat{\xi}}.$ Let us compute the dimension of $\mathrm{T}_{\widehat{\xi}}\, \mathcal{O}_{\widehat{\xi}}.$ If $G_{\widehat{\xi}}$ is the stabilizer of $\widehat{\xi}$ in $\mathrm{PGL}(3;\mathbb{C}),$ one has an exact sequence $$0\longrightarrow\mathrm{Lie}(G_{\widehat{\xi}})\longrightarrow\mathrm{Lie}(\mathrm{PGL}(3;\mathbb{C}))\longrightarrow \mathrm{T}_{\widehat{\xi}}\, \mathcal{O}_{\widehat{\xi}}\longrightarrow 0.$$ Thus, by Lemma \ref{idcomp}, we obtain:
$\dim( \mathrm{T}_{\widehat{\xi}}\, \mathcal{O}_{\widehat{\xi}})=8-\mathrm{h}^0(X,\mathrm{T}X).$ Otherwise, since $\mathrm{KS}_{\widehat{\xi}}(\mathfrak{X}_N)$ is surjective, we get $$\dim (\ker \mathrm{KS}_{\widehat{\xi}}(\mathfrak{X}_N))=2N-\mathrm{h}^1(X, \mathrm{T}X)=8-\mathrm{h}^0(X, \mathrm{T}X)$$ by the first assertion of Lemma \ref{vanish}.
\end{proof}

\noindent Remark that if $N \geq 4,$ the kernels of the Kodaira-Spencer maps of $\mathfrak{X}_N$ define a holomorphic vector bundle of rank eight on $S_N^ {\dag},$ which is the tangent bundle of the regular foliation defined by the $\mathrm{PGL(3;\mathbb{C})}$-action on $S_N^ {\dag}.$

\subsection{Generic numbers of parameters of an algebraic deformation}\label{gennum}

In the section, we define the generic numbers of parameters of an algebraic deformation. Recall that a deformation $(\mathfrak{X}, \pi, B)$ is algebraic if there exists an embedding $i \colon \mathfrak{X}\rightarrow B \times \mathbb{P}^N$ such that $\pi$ is induced by the first projection of $B \times \mathbb{P}^N.$ If $\mathfrak{X}$ is algebraic, the fibers $(\mathfrak{X}_b)_{b \in B}$ are complex projective varieties. We always assume that $B$ is connected.

\begin{pro} \label{subbundle}
{\sl Let $(\mathfrak{X}, \pi, B)$ be an algebraic deformation. Then there exist a proper closed analytic subset $Z$ of $B$ and a holomorphic vector bundle $E$ on $U=B \setminus Z$ such that:
\begin{itemize}
\item E is a holomorphic subbundle of $\mathrm{T}U;$
\item the function $b \mapsto \mathrm{h}^1(\mathfrak{X}_b, \mathrm{T}\mathfrak{X}_b)$ is constant on $U;$
\item for all $b$ in $B,$ $E_{\vert b}$ is the kernel of $\mathrm{KS}_b(\mathfrak{X}).$
\end{itemize}}
\end{pro}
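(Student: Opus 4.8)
The plan is to realize the family of Kodaira--Spencer maps $\big(\mathrm{KS}_b(\mathfrak{X})\big)_{b \in B}$ as the fibers of a single morphism of holomorphic vector bundles, and then to remove the loci where the relevant cohomology dimension and the rank of this morphism fail to be generic. First I would introduce the relative tangent sheaf $\mathrm{T}_{\mathfrak{X}/B} = \ker\big(d\pi \colon \mathrm{T}\mathfrak{X} \to \pi^* \mathrm{T}B\big).$ Since $\pi$ is a proper holomorphic submersion, $\mathrm{T}_{\mathfrak{X}/B}$ is locally free, flat over $B,$ and its restriction to a fiber $\mathfrak{X}_b$ is $\mathrm{T}\mathfrak{X}_b;$ moreover one has the short exact sequence
\[
0 \longrightarrow \mathrm{T}_{\mathfrak{X}/B} \longrightarrow \mathrm{T}\mathfrak{X} \xrightarrow{\,d\pi\,} \pi^* \mathrm{T}B \longrightarrow 0.
\]
Pushing forward by $\pi$ and using that $\pi_* \mathcal{O}_\mathfrak{X} = \mathcal{O}_B$ (the fibers, being rational surfaces, are connected), the projection formula identifies $\pi_* \pi^* \mathrm{T}B$ with $\mathrm{T}B,$ and the resulting connecting homomorphism $\kappa \colon \mathrm{T}B \to R^1 \pi_* \mathrm{T}_{\mathfrak{X}/B}$ is the global Kodaira--Spencer map of the deformation.

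Next I would use semicontinuity to control the target bundle. By Grauert's semicontinuity theorem applied to the flat family $\mathrm{T}_{\mathfrak{X}/B}$ over the connected manifold $B,$ the function $b \mapsto \mathrm{h}^1(\mathfrak{X}_b, \mathrm{T}\mathfrak{X}_b)$ is upper semicontinuous, so there is a proper closed analytic subset $Z_1 \subset B$ outside of which it is constant and equal to its minimal value. On $U_1 = B \setminus Z_1,$ Grauert's base change theorem guarantees that $R^1 \pi_* \mathrm{T}_{\mathfrak{X}/B}$ is locally free and that its fiber at each $b$ is canonically $\mathrm{H}^1(\mathfrak{X}_b, \mathrm{T}\mathfrak{X}_b).$ Consequently $\kappa$ restricts on $U_1$ to a morphism of holomorphic vector bundles whose fiber at $b$ is exactly $\mathrm{KS}_b(\mathfrak{X});$ this compatibility of the connecting homomorphism with base change is where algebraicity (equivalently, properness) of the deformation intervenes.

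Finally I would extract the subbundle by a second semicontinuity argument. The rank of a morphism of vector bundles is lower semicontinuous, hence $b \mapsto \dim \ker \kappa_b$ is upper semicontinuous on $U_1,$ and the locus where the rank of $\kappa$ drops below its generic value is a proper closed analytic subset $Z_2 \subset U_1,$ cut out locally by the vanishing of maximal minors of $\kappa.$ Setting $U = U_1 \setminus Z_2,$ the morphism $\kappa$ has constant rank on $U,$ so $E := \ker\big(\kappa|_U\big)$ is a holomorphic subbundle of $\mathrm{T}U$ satisfying $E_{\vert b} = \ker \mathrm{KS}_b(\mathfrak{X})$ for every $b$ in $U.$ The set $Z = Z_1 \cup Z_2$ is a proper closed analytic subset of $B$ (it is closed because $Z_2$ is closed in $B \setminus Z_1$), which yields the three assertions. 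The main point requiring care is the second step: one must discard the jumping locus $Z_1$ of $\mathrm{h}^1$ before interpreting the fibers of $\kappa,$ since it is only there that base change identifies the fiber of the global Kodaira--Spencer map with the pointwise one --- without this the fibers of $\ker \kappa$ need not agree with the kernels of the genuine maps $\mathrm{KS}_b(\mathfrak{X}).$
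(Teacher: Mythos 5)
Your proposal is correct and follows essentially the same route as the paper's proof: both realize the Kodaira--Spencer maps as the fibers of the connecting morphism $\mathrm{T}B \to \mathrm{R}^1\pi_*\mathrm{T}^{\mathrm{rel}}\mathfrak{X}$ coming from the relative tangent sequence, remove the jumping locus of $b \mapsto \mathrm{h}^1(\mathfrak{X}_b,\mathrm{T}\mathfrak{X}_b)$ so that the direct image becomes locally free and base change identifies its fibers with $\mathrm{H}^1(\mathfrak{X}_b,\mathrm{T}\mathfrak{X}_b),$ and then remove the rank-degeneracy locus of the resulting bundle morphism to obtain the kernel subbundle $E.$ The only divergence is in the justification of the semicontinuity step --- you invoke Grauert's analytic semicontinuity and base-change theorems directly, where the paper uses algebraicity to produce a complex of vector bundles computing the fiberwise $\mathrm{H}^1$ (following Voisin) --- and your parenthetical \og algebraicity (equivalently, properness)\fg{} is slightly inaccurate, since properness is already part of the definition of a deformation and algebraicity is the extra hypothesis the paper uses precisely at this point.
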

\begin{proof}
Let $\mathrm{T}^{\mathrm{rel}} \mathfrak{X}$ be the relative tangent bundle of $\mathfrak{X}$ defined by the exact sequence
$$0 \rightarrow \mathrm{T}^{\mathrm{rel}} \mathfrak{X} \rightarrow \mathrm{T} \mathfrak{X} \rightarrow \pi^{*} \mathrm{T}B \rightarrow 0,$$
where the last map is the differential of $\pi.$
The connection morphism $\mu\colon \mathrm{T}B \simeq \mathrm{R}^0 \pi_* (\pi^* \mathrm{T}B) \to
\mathrm{R}^1 \pi_* \mathrm{T}^{\mathrm{rel}} \mathfrak{X}$ induces for every $b$ in $B\,$ a map $$\mu_b \colon \mathrm{T}_b B \longrightarrow (\, \mathrm{R}^1 \pi_* \mathrm{T} \mathfrak{X}^{\mathrm{rel}})_{\vert b} \longrightarrow \mathrm{H}^1(\mathfrak{X}_b, \mathrm{T} \mathfrak{X}_b)$$ which is exactly the Kodaira-Spencer map of $\mathfrak{X}$ at $b$ (\emph{see} \cite[p. $219$]{Vo}). Since the deformation $\mathfrak{X}$ is algebraic, there exists a complex $\mathcal{E}^{\bullet}$ of vector bundles on $B$ such that for every~$b$ in $B,$ $\mathrm{H}^1(\mathfrak{X}_b, \mathrm{T} \mathfrak{X}_b)$ is the cohomology in degree one of the complex $\mathcal{E}_{\vert b}$ (\emph{see} \cite[p. $220$]{Vo}). This implies that the function $b \mapsto \dim \mathrm{H}^1(\mathfrak{X}_b, \mathrm{T} \mathfrak{X}_b)$ is constant outside a proper analytic subset $Z$ of $B.$ By Grauert's theorem \cite[p. $288$]{Ha}, $\mathrm{R}^1 \pi_* \mathrm{T} \mathfrak{X}^{\mathrm{rel}}$ is locally free on $U=B \setminus Z$ and for every~$b$ in $U,$ the base change morphism from $\mathrm{R}^1 \pi_* \mathrm{T} \mathfrak{X}^{\mathrm{rel}}_{\vert b}$ to $\mathrm{H}^1(\mathfrak{X}_b, \mathrm{T} \mathfrak{X}_b)$ is an isomorphism. After removing again a proper analytic subset in $U,$ we can assume that $\mu$ has constant rank on $U,$ so that its kernel is a holomorphic vector bundle.
\end{proof}

\noindent This being done, the definition of the generic number of parameters of an algebraic deformation runs as follows:

\begin{defi}  The number $\mathfrak{m}(\mathfrak{X})=\dim B-\mathrm{rank}\, E$ is called the \textit{generic number of parameters of} $\mathfrak{X}.$
\end{defi}

\begin{rem}
\begin{enumerate}
\item [(i)] Recall that a deformation $(\mathfrak{X}, \pi, B)$ is called \textit{effectively parameterized} (resp. \textit{generically effectively parameterized}) if for every $b$ in $B$ (resp. for every generic $b$ in B), the Kodaira-Spencer map $\mathrm{KS}_b(\mathfrak{X})$ is injective (\emph{see} \cite[p. 215]{Kodaira}). By Proposition \ref{subbundle}, an algebraic deformation $(\mathfrak{X}, \pi, B)$ is generically effectively parameterized if and only if $\mathfrak{m}(\mathfrak{X})=\dim B.$

\item [(ii)] By Theorem \ref{easy}, for any integer $N \geq 4,$ $\mathfrak{m}(\mathfrak{X}_N)=2N-8.$
\end{enumerate}
\end{rem}

\subsection{How to count parameters in a family of rational surfaces?}

\noindent Let $\mathfrak{Y}$ be a family of rational surfaces parameterized by an open set $U$ of $\mathbb{C}^n.$ Since the deformations $\mathfrak{X}_N$ are complete, we can suppose that $\mathfrak{Y}$ is obtained by pulling back the deformation $\mathfrak{X}_N$ by a  holomorphic map $\psi \colon U \rightarrow S_N.$ We will make the assumption that the fibers of $\mathfrak{Y}$ have no holomorphic vector field, so that $\psi$ takes its values in $S_N^{\dag}.$ In this situation, we are able to compute the numbers of parameters of such a family quite simply:

\begin{thm}\label{critere}
{\sl Let $U$ be an open set in $\mathbb{C}^n,$ $N$ be an integer greater than or equal to $4$ and $\psi\colon U \to S_N^{\dag}$ be a holomorphic map. Then $\mathfrak{m}(\psi^* \mathfrak{X}_N)$ is the smallest integer $k$ such that for all generic $\alpha$ in $U,$ there exist a neighborhood $\Omega$ of $0$ in~$\mathbb{C}^{n-k}$ and two holomorphic maps $\gamma \colon \Omega \rightarrow U$ and $M \colon \Omega \rightarrow \mathrm{PGL(3; \mathbb{C})}$ such that:
\begin{itemize}
\item $\gamma_{*}(0\!)$ is injective,
\item $\gamma(0\! )=\alpha$ and $M(0\!)=\mathrm{Id},$
\item for all $t$ in $\Omega,$ $\psi \, (\gamma (\, t \!))=M(\, t)\,  \psi(\alpha).$
\end{itemize}}
\end{thm}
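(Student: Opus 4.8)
The proof will follow closely the pattern of Proposition \ref{critere0}, replacing the adjoint orbit of $f$ in $\mathrm{Bir}_d(\mathbb{P}^2)$ by the $\mathrm{PGL}(3;\mathbb{C})$-orbit $O_{\psi(\alpha)}$ of $\psi(\alpha)$ in $S_N$. The starting point is the functoriality of the Kodaira-Spencer map under pullback: for every $\alpha$ in $U$ one has $\mathrm{KS}_\alpha(\psi^*\mathfrak{X}_N)=\mathrm{KS}_{\psi(\alpha)}(\mathfrak{X}_N)\circ\psi_*(\alpha)$. Combined with Theorem \ref{easy}, which identifies $\ker\mathrm{KS}_{\psi(\alpha)}(\mathfrak{X}_N)$ with $\mathrm{T}_{\psi(\alpha)}O_{\psi(\alpha)}$, this yields $\ker\mathrm{KS}_\alpha(\psi^*\mathfrak{X}_N)=\psi_*(\alpha)^{-1}(\mathrm{T}_{\psi(\alpha)}O_{\psi(\alpha)})$, so that for generic $\alpha$ this kernel has dimension $n-\mathfrak{m}(\psi^*\mathfrak{X}_N)$ by the very definition of $\mathfrak{m}$ through Proposition \ref{subbundle}. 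I write $m=\mathfrak{m}(\psi^*\mathfrak{X}_N)$ throughout.

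For the easy inequality I would show that any admissible pair $(\gamma,M)$ with $\Omega\subset\mathbb{C}^{n-k}$ forces $k\ge m$. Differentiating the relation $\psi(\gamma(t))=M(t)\psi(\alpha)$ at $t=0$ shows that $\psi_*(\alpha)\gamma_*(0)(w)$ is the value at $\psi(\alpha)$ of the infinitesimal action of $M_*(0)(w)\in\mathrm{Lie}(\mathrm{PGL}(3;\mathbb{C}))$, hence lies in $\mathrm{T}_{\psi(\alpha)}O_{\psi(\alpha)}$. Thus the image $\gamma_*(0)(\mathbb{C}^{n-k})$ is contained in $\ker\mathrm{KS}_\alpha(\psi^*\mathfrak{X}_N)$; since $\gamma_*(0)$ is injective this gives $n-k\le n-m$, i.e. $k\ge m$, whenever $\alpha$ lies in the generic locus where the kernel has dimension $n-m$.

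The substance of the proof is the reverse construction, producing admissible data with $k=m$. Here I would use that, by the remark following Theorem \ref{easy}, the $\mathrm{PGL}(3;\mathbb{C})$-action defines a regular foliation $\mathcal{F}$ of rank eight on $S_N^\dagger$ whose leaves are the orbits. Fixing a generic $\alpha_0$, I choose a small neighbourhood $V$ of $\psi(\alpha_0)$ on which a holomorphic quotient $\pi_{\mathcal{F}}\colon V\to W$ by the foliation is defined, with $\dim W=2N-8$, and I set $\Psi=\pi_{\mathcal{F}}\circ\psi$ on $U_0=\psi^{-1}(V)$. Since $\ker\mathrm{d}\Psi_\alpha=\psi_*(\alpha)^{-1}(\mathrm{T}_{\psi(\alpha)}\mathcal{F})=\ker\mathrm{KS}_\alpha(\psi^*\mathfrak{X}_N)$ for every $\alpha$ in $U_0$, the rank of $\mathrm{d}\Psi$ is at most $m$ everywhere and equal to $m$ at $\alpha_0$; by lower semicontinuity of the rank it is constantly equal to $m$ near $\alpha_0$. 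The constant-rank theorem then shows that the fibre $Z=\Psi^{-1}(\Psi(\alpha_0))$ is a smooth submanifold of dimension $n-m$ through $\alpha_0$, with $\mathrm{T}_{\alpha_0}Z=\ker\mathrm{KS}_{\alpha_0}(\psi^*\mathfrak{X}_N)$, and $\psi(Z)$ lies in the single leaf through $\psi(\alpha_0)$, hence in $O_{\psi(\alpha_0)}$, provided $V$ is chosen small enough.

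It then remains to lift to $\mathrm{PGL}(3;\mathbb{C})$. Because the action is free on $S_N^\dagger$ and $\dim\mathrm{PGL}(3;\mathbb{C})=8$ equals the orbit dimension, the orbit map $g\mapsto g\cdot\psi(\alpha_0)$ is an injective immersion, hence a local biholomorphism onto $O_{\psi(\alpha_0)}$; I would take its local inverse $\tau$, normalized by $\tau(\psi(\alpha_0))=\mathrm{Id}$, pick a parametrization $\gamma\colon\Omega\subset\mathbb{C}^{n-m}\to Z$ with $\gamma(0)=\alpha_0$ and $\gamma_*(0)$ injective, and set $M(t)=\tau(\psi(\gamma(t)))$. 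Then $M(0)=\mathrm{Id}$ and $\psi(\gamma(t))=M(t)\psi(\alpha_0)$ by construction, giving admissible data with $k=m$; together with the easy inequality this shows that $m$ is the smallest admissible integer. The only genuinely delicate point is the smoothness and the dimension count for $Z$: everything hinges on the constant-rank argument, which in turn relies on the regularity of the foliation on $S_N^\dagger$ and on the functoriality of the Kodaira-Spencer map used to identify $\ker\mathrm{d}\Psi$ with $\ker\mathrm{KS}$.
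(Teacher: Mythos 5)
Your proof is correct and takes essentially the same route as the paper's: both identify $\ker\mathrm{KS}_\alpha(\psi^*\mathfrak{X}_N)$ with $\psi_*(\alpha)^{-1}\bigl(\mathrm{T}_{\psi(\alpha)}O_{\psi(\alpha)}\bigr)$ via Theorem \ref{easy} and the functoriality of the Kodaira-Spencer map, integrate this distribution using the regular $\mathrm{PGL}(3;\mathbb{C})$-orbit foliation on $S_N^{\dag}$ to produce an $(n-\mathfrak{m})$-dimensional germ through a generic point whose $\psi$-image lies in a single orbit, and then lift through a section (in your case, the local inverse) of the orbit map, the reverse inequality following because any admissible $(\gamma,M)$ has $\psi$-image inside one orbit, hence tangent space inside $\ker\mathrm{KS}$. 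The only differences are technical repackaging: the paper puts $\psi$ in a product normal form and takes an integral manifold of the kernel subbundle where you compose with a local leaf-space quotient and apply the constant-rank theorem, and for the easy inequality the paper invokes holomorphic triviality of the restricted family where you differentiate the relation $\psi(\gamma(t))=M(t)\psi(\alpha)$ at $t=0$.
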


\begin{proof}
\noindent Let $\alpha$ be a generic point in $U,$ $U_{\alpha}$ be a small neighborhood of $\alpha$ and $Z_{\alpha}=\psi(U_{\alpha});$ $Z_{\alpha}$ is a smooth complex submanifold of $S_N^{\dag}$ passing through $\psi(\alpha).$ The rank of $\psi$ is generically constant, so that after a holomorphic change of coordinates, we can suppose that $U_{\alpha}=V_{\alpha} \times Z_{\alpha}$ and that $\psi$ is the projection on the second factor. If $(v,z)$ is a point of~$V_{\alpha} \times Z_{\alpha},$ the kernel of $\mathrm{KS}_{(v,z)}(\psi^* \mathfrak{X}_N)$ is the set of vectors $(h,k)$ in $\mathrm{T}_v V_{\alpha} \oplus \mathrm{T}_z Z_{\alpha}$ such that $k$ is tangent to the orbit~$O_z.$ If~$\alpha$ is sufficiently generic, these kernels define a holomorphic subbundle of $\mathrm{T}(V_{\alpha} \times Z_\alpha)$ of rank $n-\mathfrak{m}(\psi^* \mathfrak{X}_N),$ which is obviously integrable because the $\mathrm{PGL(3;\mathbb{C})}$-orbits in $S_N^{\dag}$ define a regular foliation. Let $V_{\alpha} \times {T}_\alpha$ be the associated germ of integral manifold passing through $\alpha.$ For every point $z$ in $T_\alpha,$ $\mathrm{T}_z T_\alpha$ is included in $\mathrm{T}_z O_z.$ Thus $T_\alpha$ is completely included in the orbit $O_{\psi(\alpha)}.$ Let $\gamma$ be a local parametrization of $V_{\alpha} \times T_\alpha.$ As the natural orbit map from $\mathrm{PGL(3; \mathbb{C})}$ to $\mathcal{O}_{\psi(\alpha)}$ is a holomorphic submersion, we can choose locally around $\psi(\alpha )$ a holomorphic section $\tau$ such that $\tau(\psi(\alpha))=\mathrm{Id}.$ If we define $M(\, t)=\tau[\gamma(t \!)],$ then $\gamma(\, t\!)=M(\, t) \, \psi(\alpha).$

\par \medskip

\noindent Conversely, let $\alpha$ be a generic point in $U,$ $d$ be an integer and $(\gamma,\, M)$ satisfying the assumptions of the theorem. The image of~$\gamma \,$ defines a germ of smooth subvariety $Y_{\alpha}$ in $U$ passing through $\alpha,$ and its image by $\psi$ is entirely contained in the orbit~$O_{\psi(\alpha)}.$ This implies that the restriction of $\psi^* (\mathfrak{X}_N)$ to $Y_{\alpha}$ is holomorphically trivial. Thus $\mathrm{T}_{\alpha}Y_{\alpha}$ is contained in the kernel of $\mathrm{KS}_{\alpha}(\psi^* \mathfrak{X}_N).$ Since $\mathrm{dim}\, Y_{\alpha}=n-k,$ we obtain the inequality $\mathfrak{m}(\psi^* \mathfrak{X}_N)\leq k.$
\end{proof}

\begin{eg}
Let us consider the family of birational maps $(\varphi_{\alpha}f)_{\alpha \in \mathbb{C}^*}$ defined in Theorem \ref{conique}. This family can be lifted to a family of rational surface automorphisms. In the notations of Proposition \ref{coniquecomp}; the associated deformation of rational surfaces is $\psi^* \mathfrak{X}_{15},$ where $\psi \colon \mathbb{C}^* \rightarrow S_{15}^{\dag}$ is given by $\psi(\alpha)=\big(\,\widehat{\zeta}_1, P,\, \varphi_{\alpha} (\widehat{\zeta}_2),\, \varphi_{\alpha}(R),\, \varphi_{\alpha} f \varphi_{\alpha} (\widehat{\zeta}_2),\, \varphi_{\alpha} f \varphi_{\alpha} (R) \big).$ For any point $\alpha_0$ in $\mathbb{C}^*,$ let $\Omega$ be a small neighborhood of $0$ in $\mathbb{C}$ and $\alpha \colon \Omega \rightarrow \mathbb{C}^*,$ $M \colon \Omega \rightarrow \mathrm{PGL}(3; \mathbb{C})$ be two holomorphic maps such that $\alpha(0\!)=\alpha_0,$ $M(0\!)=\mathrm{id}$ and for all $t$ in $\Omega,$ $\psi(\alpha(t\!))= M(t\!).\psi(\alpha_0).$ This means that:
\begin{itemize}
\item[(i)] $\widehat{\zeta}_1$ and $P$ are fixed by $M(t\!),$
\par \smallskip

\item[(ii)] $\widehat{\zeta}_2$ and $R$ are fixed by $\varphi_{\alpha (t\!)}^{-1} M(t\!)\varphi_{\alpha_0},$

\item[(iii)] $\widehat{\zeta}_2$ and $R$ are fixed by $(\varphi_{\alpha (t\!)} f \varphi_{\alpha (t\!)})^{-1} M(t\!)\varphi_{\alpha_0} f \varphi_{\alpha_0}$.
\end{itemize}

\noindent The stabilizer of $\widehat{\zeta}_1$ in $\mathrm{PGL}(3; \mathbb{C})$ consists of matrices of the form $\left[
\begin{array}{ccc} 1 & 0 & 0\\
0 & w & 0 \\
0 & 0 & w^2
\end{array}
\right],$ $w\in\mathbb{C}^*.$ These matrices also fix the point $P.$
Thus, condition (i) implies that $M(t\!)=\left[
\begin{array}{ccc} 1 & 0 & 0\\
0 & A(t\!) & 0 \\
0 & 0 & A(t\!)^2
\end{array}
\right],$ where $A \colon \Omega \rightarrow \mathbb{C}^*$ is a holomorphic map such that $A(0\!)=1.$ For (ii) and (iii), we must compute conditions analogous to those of Proposition \ref{recolconfig9} to describe germs of bihilomorphisms $u$ such that $u(\widehat{\zeta}_2)=\widehat{\zeta}_2.$ If we take the power series expansion of $u$ in the coordinates $(x,z),$ one checks that the conditions are the following ones:
\begin{itemize}
\item $m_{0,0}=n_{0,0}=0,$
\item $m_{0,1}=0,$
\item $m_{0,2}+m_{1,0}-n_{0,1}^2=0,$
\item $m_{0,3}+m_{1,1}-2n_{0,1}(n_{0,2}+n_{1,0})=0.$
\end{itemize}

\noindent We compute the Taylor expansion of $\varphi_{\alpha (t\!)}^{-1} M(t\!)\varphi_{\alpha_0}$ at $Q$ and explicit the above conditions. They yield that (ii) is satisfied if and only if $A(t\!)=\frac{\alpha(t\!)}{\alpha_0}.$ Then another computation shows that (iii) is always satisfied (\textit{i.e.} (iii) imposes no further restriction on the function $\alpha$), so that $\mathfrak{m}(\psi^* \mathfrak{X}_{15})=0.$ This fact will also be a consequence of Theorem \ref{genius}.
\end{eg}

\subsection{Nonbasic rational surfaces}\label{nonbasic}
We will briefly explain how to adapt the methods developed above to nonbasic rational surfaces, although we won't need it in the paper. The situation is more subtle, even for Hirzebruch surfaces. Indeed, if $n\geq 2,$ $\mathrm{Aut}(\mathbb{F}_n)$ has dimension $n+5$ (\emph{see} \cite{Bea}) so that $\mathrm{h}^1(\mathbb{F}_n, \mathrm{T}\mathbb{F}_n)=n-1$ and $\mathrm{h}^2(\mathbb{F}_n, \mathrm{T}\mathbb{F}_n)=0$ by Lemma~\ref{vanish}. Therefore the Hirzebruch surfaces $\mathbb{F}_n$ are not rigid if $n \geq 2.$ Complete deformations of Hirzebruch surfaces $(\mathbb{F}_n)_{n \geq 2}$ are known and come from flat deformations of rank-two holomorphic bundles on $\mathbb{P}^1(\mathbb{C})$ (\emph{see} \cite[Chap.~II]{Manetti}). These deformations are highly non-effectively parameterized because their generic number of parameters is zero. We will denote them by $(\mathfrak{F}_n, U_n),$ where $U_n$ is a neighborhood of the origin in $\mathbb{C}^{n-1}$ and $(\mathfrak{F}_n)_0=\mathbb{F}_n.$ The fibers of~$\mathfrak{F}_n$ over points of $U_n \setminus \{0\}$ are Hirzebruch surfaces $\mathbb{F}_{n-2k}$ of smaller index.
\par \medskip

\noindent The deformations of nonbasic rational surfaces can be explicitly described using the same method as in \S\ref{def}: for every integer $n \geq 2,$ let us define inductively a sequence of deformations $\tilde{\pi}_{N,n}\colon\mathfrak{F}_{N,n} \to S_{N,n}$ by $\mathfrak{F}_{0,n}=\mathfrak{F}_n$ and $\mathfrak{F}_{N+1,n}=\widehat{\mathfrak{F}_{N,n}}$ (\emph{cf} Definition~\ref{blowup}). This means that
$$S_{N,n}=\{a,\, p_1, \ldots , p_N \, \vert \, a \in U_n,\, p_1 \in (\mathfrak{F}_n)_a,\, p_2 \in \mathrm{Bl}_{p_1}(\mathfrak{F}_n)_a, \ldots, p_N \in \mathrm{Bl}_{p_{N-1}} \ldots \mathrm{Bl}_{p_1}(\mathfrak{F}_n)_a\}$$
and that $(\mathfrak{F}_{N,n})_{a,\, p_1, \ldots, p_N}= \mathrm{Bl}_{p_N} \ldots \mathrm{Bl}_{p_1}(\mathfrak{F}_n)_a.$

\noindent If $X=\mathrm{Bl}_{\widehat{\xi}\, } \mathbb{F}_n$ is a nonbasic rational surface, then $\widehat{\xi}$ defines a point in $S_{N,n}$ for a certain integer $N.$
By Proposition \ref{horik}, $\mathfrak{F}_{N,n}$ is complete at $\widehat{\xi}.$ Therefore small deformations of a nonbasic rational surface can be parameterized by (possibly infinitely near) points on Hirzebruch surfaces $\mathbb{F}_n,$ but $n$ can jump with the deformation parameters.

\subsection{Application to families of Cremona transformations}\label{nonbasic}
The aim of this section is to relate to different notions of "generic number of parameters", the first one being introduced in \S \ref{desing} for holomorphic families of birational maps and the second one in \S\ref{gennum} for arbitrary algebraic deformations. Our first main result is:

\begin{thm}\label{inequality}
{\sl Let $N$ and $d$ be positive integers such that $N$ is greater than or equal to $4,$ $Y$ be a smooth connected analytic subset of $\mathrm{Bir}_d(\mathbb{P}^2)$ and $\psi \colon Y \rightarrow S_N^{\dag}$ be a holomorphic map. If $\mathfrak{X}=\psi^{*} \mathfrak{X}_N,$ let $\Gamma \colon \mathfrak{X} \rightarrow Y \times \mathbb{P}^2$ be the natural holomorphic map over $Y$ whose restriction on each fiber $\mathfrak{X}_y$ is  the natural projection from $\mathrm{Bl}_{\psi(y)} \mathbb{P}^2$ to~$\mathbb{P}^2.$ Assume that for any $y$ in $Y,$ if $f_y$ is the birational map parameterized by $y,$ ${\Gamma_y}^{-1} \circ f_y \circ \Gamma_y$ is an automorphism of the rational surface $\mathfrak{X}_y.$
Then the generic number of parameters of the holomorphic family $Y$ is smaller than the generic number of parameters of the deformation $\mathfrak{X},$ {\it i.e.} $\mathfrak{m}(Y)\leq \mathfrak{m}(\mathfrak{X}).$}
\end{thm}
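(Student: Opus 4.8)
The plan is to reduce the statement to the two combinatorial characterizations of the generic numbers of parameters already at hand — Proposition~\ref{critere0} for $\mathfrak{m}(Y)$ and Theorem~\ref{critere} for $\mathfrak{m}(\mathfrak{X})$ — and to show that a witness for the second automatically provides a witness for the first with the same number of free parameters. Concretely, set $n=\dim Y$, fix a generic $\alpha$ in $Y$ and put $k=\mathfrak{m}(\mathfrak{X})$. By Theorem~\ref{critere} there exist a connected neighbourhood $\Omega$ of $0$ in $\mathbb{C}^{n-k}$ and holomorphic maps $\gamma\colon\Omega\to Y$ and $M\colon\Omega\to\mathrm{PGL}(3;\mathbb{C})$ with $\gamma_*(0)$ injective, $\gamma(0)=\alpha$, $M(0)=\mathrm{Id}$, and $\psi(\gamma(t))=M(t)\,\psi(\alpha)$ for all $t$. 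The goal is to prove that this very pair $(\gamma,M)$ also satisfies the hypotheses of Proposition~\ref{critere0}, namely $f_{\gamma(t)}=M(t)\,f_\alpha\,M(t)^{-1}$; granting this, Proposition~\ref{critere0} forces $\mathfrak{m}(Y)\le k=\mathfrak{m}(\mathfrak{X})$. Conceptually, this amounts to the infinitesimal inclusion $\ker \mathrm{KS}_\alpha(\mathfrak{X})\subseteq \mathrm{T}_{f_\alpha}O_{f_\alpha}\cap \mathrm{T}_{f_\alpha}Y$, using Theorem~\ref{easy} to identify $\ker \mathrm{KS}_{\psi(\alpha)}(\mathfrak{X}_N)$ with $\mathrm{T}_{\psi(\alpha)}O_{\psi(\alpha)}$.

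The heart of the argument is to promote the configuration identity $\psi(\gamma(t))=M(t)\,\psi(\alpha)$ to the adjoint-conjugacy identity for the birational maps. First I would invoke the definition of the $\mathrm{PGL}(3;\mathbb{C})$-action on $S_N$ from \S\ref{def}: the equality $\psi(\gamma(t))=M(t)\cdot\psi(\alpha)$ means precisely that $M(t)$ lifts to an isomorphism $\widetilde{M}(t)\colon X_\alpha\to X_{\gamma(t)}$ between the blown-up surfaces, compatible with the blow-down maps in the sense that $\Gamma_{\gamma(t)}\circ\widetilde{M}(t)=M(t)\circ\Gamma_\alpha$. Writing $F_y=\Gamma_y^{-1}\circ f_y\circ\Gamma_y\in\mathrm{Aut}(X_y)$ for the lifted automorphism, I would then form the automorphism of $X_\alpha$ given by
$$G(t)=\widetilde{M}(t)^{-1}\circ F_{\gamma(t)}\circ\widetilde{M}(t),$$
which lies in $\mathrm{Aut}(X_\alpha)$ since it is the conjugate of an automorphism by an isomorphism. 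A direct substitution using $\widetilde{M}(t)=\Gamma_{\gamma(t)}^{-1}M(t)\Gamma_\alpha$ collapses the blow-down maps and yields $G(t)=\Gamma_\alpha^{-1}\big(M(t)^{-1}f_{\gamma(t)}M(t)\big)\Gamma_\alpha$, with $G(0)=F_\alpha$.

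The final, decisive step is a rigidity argument. Since $\psi(\alpha)$ lies in $S_N^{\dag}$, the surface $X_\alpha=\mathrm{Bl}_{\psi(\alpha)}\mathbb{P}^2$ carries no nonzero holomorphic vector field; by Lemma~\ref{idcomp} the identity component of $\mathrm{Aut}(X_\alpha)$ is trivial, so $\mathrm{Aut}(X_\alpha)$ is a \emph{discrete} complex Lie group. As $t\mapsto G(t)$ is a holomorphic map from the connected set $\Omega$ into this discrete group with $G(0)=F_\alpha$, it must be constant, equal to $F_\alpha$. Transporting this back through $\Gamma_\alpha$ gives $M(t)^{-1}f_{\gamma(t)}M(t)=f_\alpha$, that is $f_{\gamma(t)}=M(t)\,f_\alpha\,M(t)^{-1}$, exactly the hypothesis of Proposition~\ref{critere0} with $n-k$ parameters. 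I expect the main obstacle to be the bookkeeping of the second paragraph — distinguishing genuine morphisms from merely birational maps, verifying the compatibility $\Gamma_{\gamma(t)}\circ\widetilde{M}(t)=M(t)\circ\Gamma_\alpha$, and checking that $t\mapsto G(t)$ is holomorphic as a map into the Lie group $\mathrm{Aut}(X_\alpha)$ — rather than any deep difficulty: once the lift $\widetilde{M}(t)$ is in place, the discreteness of $\mathrm{Aut}(X_\alpha)$ does all the real work.
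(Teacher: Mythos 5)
Your proposal is correct and follows essentially the same route as the paper's proof: apply Theorem~\ref{critere} to get the pair $(\gamma,M)$ with $\psi(\gamma(t))=M(t)\,\psi(\alpha)$, lift $M(t)$ to an isomorphism of blown-up surfaces (the paper's trivialization $\Delta_t$ is your $\widetilde{M}(t)$), and use the absence of holomorphic vector fields on the fiber to force the resulting holomorphic family of automorphisms $\Gamma_\alpha^{-1}\circ[M(t)^{-1}f_{\gamma(t)}M(t)]\circ\Gamma_\alpha$ to be constant, yielding $f_{\gamma(t)}=M(t)\,f_\alpha\,M(t)^{-1}$. The only cosmetic differences are that you finish via Proposition~\ref{critere0} while the paper concludes directly from the definition of $\mathfrak{m}(Y)$ via the codimension of $Y\cap O_{f_\alpha}$, and that you make the discreteness of $\mathrm{Aut}(X_\alpha)$ explicit through Lemma~\ref{idcomp}, a step the paper leaves implicit.
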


\begin{proof}
Let $y$ be a generic point in $Y.$ By Theorem \ref{critere}, we can find a complex submanifold $\Omega$ of $Y$ of codimen\-sion~$\mathfrak{m}(\mathfrak{X})$ passing through $y$ as well as a holomorphic map $M \colon \Omega \rightarrow \mathrm{PGL}(3; \mathbb{C})$ such that $M(y)=\mathrm{id}$ and for every $t$ in~$\Omega,$ $\psi(t)=M(t) \psi(y).$  Let $\Delta \colon \Omega \times \mathfrak{X}_y \rightarrow \mathfrak{X}_{\vert\Omega}$ be the associated global holomorphic trivialization of the deformation~$\mathfrak{X}_{\vert\Omega}:$ for every $t$ in $\Omega,$ $\Delta_{t}$
is the isomorphism between $\mathrm{Bl}_{\psi(y)}\mathbb{P}^2$ and $\mathrm{Bl}_{\psi(t)}\mathbb{P}^2$ induced by $M(t).$ This implies that $$\Delta^{-1}_{t} \circ \Gamma ^{-1}_{t} \circ f_{t} \circ \Gamma_{t} \circ \Delta_{t}=\Gamma^{-1}_y \circ [M(t)^{-1} \circ f_{t} \circ M(t)] \circ \Gamma_{y}.$$ Since $\mathfrak{X}_y$ has no nontrivial holomorphic vector field, the holomorphic family $(\Gamma^{-1}_y \circ [M(t)^{-1} \circ f_{t} \circ M(t)] \circ \Gamma_{y})_{t \in \Omega}$ of automorphisms of $\mathfrak{X}_y$ must be constant. Thus, we obtain that for every $t$ in $\Omega,$ $f_{t}=M(t) \circ f_y \circ M(t)^{-1}.$ This means that $\Omega$ is contained in the adjoint orbit $O_y$ of $f_y,$ so that $\mathfrak{m}(Y)= \mathrm{codim} \, (Y \cap O_y)^{irr} \leq \mathrm{codim} \, \Omega=\mathfrak{m}(\mathfrak{X}).$
\end{proof}

\noindent It is easy to produce examples where $\mathfrak{m}(Y) < \mathfrak{m}(\mathfrak{X}):$ let $d=1,$ $\iota$ be a linear involution which is not in the center of~$\mathrm{PGL}(3; \mathbb{C})$ and $Y$ be a smooth curve in the adjoint orbit $O_{\iota}$ passing through $\iota.$ We can assume that there exists a holomorphic map $M \colon Y \rightarrow \mathrm{PGL}(3; \mathbb{C})$ such that $M(\iota)=\mathrm{id}$ and for all $y$ in $Y,$  $f_y=M(y)\, \iota \, M(y)^{-1}.$ Let us choose four distinct generic points $p_1,$ $p_2,$ $p_3,$ and $p_4$ in $\mathbb{P}^2$ such that for $1 \leq i,j \leq 4,$ $\iota(p_i) \neq p_j.$ We define a holomorphic function $\psi \colon Y \rightarrow S_8$ by the formula $$\psi(y)=\big(M(y)(p_1),\, M(y)(p_2),\, M(y)(p_3),\, M(y)(p(y)),\, M(y)(\iota(p_1)),\, M(y)(\iota(p_2)),\, M(y)(\iota(p_3)),\, M(y)(\iota[p(y)])\big),$$
\noindent where $p \colon Y \rightarrow \mathbb{P}^2$ is a holomorphic immersion such that~$p(\iota )=p_4.$
Since the points $p_i$ are generic, we can assume that $\psi$ takes its values in $S_8^{\dag}.$ Besides, for any $y$ in $Y,$ the involution $f_y$ can be lifted to an automorphism of $\mathrm{Bl}_{\psi(y\, )}\mathbb{P}^2.$ Theorem \ref{critere} implies that $\mathfrak{m}(\psi^* \mathfrak{X}_8)=1,$ but $\mathfrak{m}(Y)=0.$
\par \medskip

\begin{thm}\label{genius}
{\sl Let $k$ and $N$ be two positive integers, $f$ be a birational map of the complex projective plane, $\widehat{\xi}_1,$ and $\widehat{\xi}_2$ be two points of $S_N$ corresponding to the minimal desingularization of $f$ and $U$ be a smooth connected analytic subset of~$\mathrm{PGL}(3; \mathbb{C}).$ We make the following assumptions:

\begin{itemize}
\item [(i)] For all $\varphi$ in $U,$ $(\varphi f)^k \varphi\,  \widehat{\xi}_2=\widehat{\xi}_1.$

\item [(ii)] The supports of $\widehat{\xi}_1,$ $\varphi \widehat{\xi}_2$ and $(\varphi f)^j \varphi\,  \widehat{\xi}_2, \, \, 1 \leq j \leq k-1,$ are pairwise disjoint.

\item [(iii)] If $\psi \colon U \rightarrow S_{kN}$ is defined by $\psi(\varphi)=( \widehat{\xi}_1, \varphi \, \widehat{\xi}_2, \varphi f  \varphi\,  \widehat{\xi}_2, \ldots, (\varphi f)^{k-1} \varphi\,  \widehat{\xi}_2),$ then the image of $\psi$ is included in $S_{kN}^{\dag}.$

\item[(iv)] For all $\varphi$ in $U,$ the birational map $\varphi f$ can be lifted to an automorphism of the rational surface $\mathrm{Bl}_{\psi(\varphi)}\mathbb{P}^2.$
\end{itemize}

\noindent If $\widetilde{U}$ denotes the family of birational maps $(\varphi f)_{\varphi \in U}$ and if $\mathfrak{X}=\psi^*\mathfrak{X}_{kN},$ then $\mathfrak{m}(\widetilde{U}\!)=\mathfrak{m}(\mathfrak{X}).$}
\end{thm}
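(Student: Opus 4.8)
The plan is to prove the two inequalities $\mathfrak{m}(\widetilde U)\le\mathfrak{m}(\mathfrak X)$ and $\mathfrak{m}(\mathfrak X)\le\mathfrak{m}(\widetilde U)$ separately. First I would record that $\varphi\mapsto\varphi f$ identifies $U$ with $\widetilde U$: since $f$ is fixed of some degree $d$ and $\varphi$ is linear, $\varphi f$ has degree $d$ with no common factor and the map is injective (as $f$ is invertible), so $\widetilde U$ is a smooth connected analytic subset of $\mathrm{Bir}_d(\mathbb P^2)$ biholomorphic to $U$, and $\psi$ may be viewed as a map defined on $\widetilde U$. Assumption (iv) is then exactly the hypothesis of Theorem \ref{inequality} with $Y=\widetilde U$ (and $N$ replaced by $kN$), so that theorem gives at once $\mathfrak{m}(\widetilde U)\le\mathfrak{m}(\mathfrak X)$.

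For the reverse inequality I would exploit the parallel characterizations of the two numbers. Set $m=\mathfrak m(\widetilde U)$ and $n=\dim U$, and fix $\varphi_0\in U$ generic for both descriptions. By Proposition \ref{critere0} applied to $\widetilde U$ there are a connected neighborhood $\Omega$ of $0$ in $\mathbb C^{\,n-m}$ and holomorphic maps $\gamma\colon\Omega\to\widetilde U$, $M\colon\Omega\to\mathrm{PGL}(3;\mathbb C)$ with $\gamma_*(0)$ injective, $\gamma(0)=\varphi_0 f$, $M(0)=\mathrm{Id}$ and, writing $\gamma(t)=\varphi_t f$ with $\varphi_t\in U$,
\[
\varphi_t f=M(t)\,\varphi_0 f\,M(t)^{-1},\qquad\text{hence}\qquad (\varphi_t f)^j=M(t)\,(\varphi_0 f)^j\,M(t)^{-1}.
\]
Through the identification $\widetilde U\simeq U$ the differential $(\varphi_t)_*(0)$ is again injective, and $\varphi_0,\,M$ satisfy the first two bullets of Theorem \ref{critere}; the only point left is the relation $\psi(\varphi_t)=M(t)\,\psi(\varphi_0)$. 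Once this is verified, Theorem \ref{critere} (with $k=m$) yields $\mathfrak m(\mathfrak X)\le m=\mathfrak m(\widetilde U)$, and combined with the first paragraph this gives the desired equality.

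The heart of the matter is therefore to establish the two identities
\[
M(t)\,\widehat\xi_1=\widehat\xi_1\qquad\text{and}\qquad \varphi_t\,\widehat\xi_2=M(t)\,\varphi_0\,\widehat\xi_2 .
\]
Granting them, a block-by-block computation proves $\psi(\varphi_t)=M(t)\psi(\varphi_0)$: the entries of $\psi(\varphi_t)$ other than $\widehat\xi_1$ are $(\varphi_t f)^j\varphi_t\widehat\xi_2$ for $0\le j\le k-1$, and substituting $(\varphi_t f)^j=M(t)(\varphi_0 f)^j M(t)^{-1}$ together with $M(t)^{-1}\varphi_t\widehat\xi_2=\varphi_0\widehat\xi_2$ turns each into $M(t)(\varphi_0 f)^j\varphi_0\widehat\xi_2$, the corresponding entry of $M(t)\psi(\varphi_0)$, while the $\widehat\xi_1$ entries match by the first identity. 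To obtain the identities themselves I would invoke the canonicity of the minimal desingularization (\S\ref{desing}): for $g\in\mathrm{Bir}(\mathbb P^2)$ and $M\in\mathrm{PGL}(3;\mathbb C)$ biregular, the conjugate $MgM^{-1}$ has minimal desingularization data $M\cdot s(g)$ and $M\cdot\tau(g)$, since $M$ transports the (uniquely determined) blow-up factorization of $g$. Applied to $g=\varphi_0 f$, this forces the data of $\varphi_t f$ to be $(M(t)\widehat\xi_1,\,M(t)\varphi_0\widehat\xi_2)$; on the other hand a direct inspection shows these data equal $(\widehat\xi_1,\varphi_t\widehat\xi_2)$, because $\varphi_t$ is biregular (so the indeterminacy locus of $\varphi_t f$ is that of $f$, namely $\widehat\xi_1$, and the image data of $f^{-1}\varphi_t^{-1}$ are $\varphi_t\widehat\xi_2$). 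Comparing the two expressions gives precisely the claim.

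The main obstacle is the ordering issue in this last step: the minimal desingularization data are canonical only up to reordering, whereas the entries of $\psi$ are ordered points of a configuration space, so the equivariance $MgM^{-1}\mapsto(M\cdot s(g),M\cdot\tau(g))$ must be matched at the level of \emph{ordered} points. I would resolve this by continuity: the ordered data of $\varphi_t f$ and the list $M(t)\cdot(\text{ordered data of }\varphi_0 f)$ depend holomorphically on $t\in\Omega$, they agree at $t=0$ since $M(0)=\mathrm{Id}$, and they can differ only by a permutation of the points, which is locally constant on the connected set $\Omega$; hence they coincide for every $t$. This delivers the two ordered identities, completes the proof of $\psi(\varphi_t)=M(t)\psi(\varphi_0)$, and thus of $\mathfrak m(\mathfrak X)\le\mathfrak m(\widetilde U)$, yielding $\mathfrak m(\widetilde U)=\mathfrak m(\mathfrak X)$.
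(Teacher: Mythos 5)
Your proof has the same skeleton as the paper's: the inequality $\mathfrak{m}(\widetilde U)\le\mathfrak{m}(\mathfrak X)$ is obtained exactly as you do, by viewing $\widetilde U$ as a smooth analytic subset of $\mathrm{Bir}_d(\mathbb P^2)$ and applying Theorem \ref{inequality}; the reverse inequality is obtained by producing a holomorphic family $M(t)$ with $\varphi_t f=M(t)\,\varphi_0 f\,M(t)^{-1}$ (the paper takes a local section of the adjoint orbit map over $\widetilde U\cap O_{\varphi_0 f}$, which is precisely what your appeal to Proposition \ref{critere0} packages), reducing $\psi(\varphi_t)=M(t)\psi(\varphi_0)$ to the two identities $M(t)\widehat\xi_1=\widehat\xi_1$ and $M(t)\varphi_0\widehat\xi_2=\varphi_t\widehat\xi_2$, and concluding with Theorem \ref{critere}. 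Where you depart from the paper is in the proof of these identities: the paper notes that $(\varphi_t^{-1}M(t)\varphi_0,\,M(t))$ is a path through the identity in the group $G$ of Lemma \ref{minimal} (since $(\varphi_t^{-1}M(t)\varphi_0)f=fM(t)$), and that lemma --- proved by induction on blowups, so that at each stage one only permutes honest, pairwise distinct indeterminacy points of a surface --- gives both identities at once.

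Your substitute for Lemma \ref{minimal} contains the one genuine gap: the claim that the permutation matching two holomorphic families of ordered configurations, equal as unordered configurations, is locally constant. As a general statement this is false, precisely because of infinitely near points. In $S_2$, take $p_2(t)\in\mathrm{Bl}_{p_1}\mathbb P^2$ depending holomorphically on $t$, lying off the exceptional divisor for $t\neq0$ and tending to a point $q$ of the exceptional divisor over $p_1$ as $t\to0$; let $a(t)=(p_1,p_2(t))$ and let $b(t)$ be the swapped configuration $\bigl(\pi(p_2(t)),\,p_1\bigr)$, where $\pi$ is the projection to $\mathbb P^2$. Both families are holomorphic in $S_2$, they agree as unordered configurations for every $t$, and both converge to $(p_1,q)$ at $t=0$ (the direction of collision is the same seen from either point), yet $a(t)\neq b(t)$ for $t\neq0$: the matching permutation jumps at the degenerate parameter. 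So continuity alone does not yield local constancy; one must use that in your situation no combinatorial degeneration occurs along the family. The repair is short and stays within your argument: for every $t$, $M(t)\widehat\xi_1$ is a valid reordering of the \emph{fixed} configuration $\widehat\xi_1$, so the continuous map $t\mapsto M(t)\widehat\xi_1$ takes values in a finite subset of $S_N$ and is constant, equal to $M(0)\widehat\xi_1=\widehat\xi_1$; likewise $t\mapsto\varphi_t^{-1}M(t)\varphi_0\widehat\xi_2$ takes values in the finite set of reorderings of $\widehat\xi_2$, hence is constantly $\widehat\xi_2$, which is your second identity. With this patch (or by simply invoking Lemma \ref{minimal}, as the paper does), your proof is complete.
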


\begin{proof}
By Theorem \ref{inequality}, we know that $\mathfrak{m}(\widetilde{U}\!)\leq \mathfrak{m}(\mathfrak{X}).$ To prove the converse inequality, let us choose a generic point $\varphi $ in $U.$ Then the intersection $Z$ of $\widetilde{U}$ with the adjoint orbit $O_{\varphi f}$ of $\varphi f$ is smooth of codimension $\mathfrak{m}(\widetilde{U})$ in a neighborhood of $\varphi f.$  If $v \colon \mathrm{PGL}(3; \mathbb{C}) \rightarrow O_{\varphi f}$ is the orbit map associated with the adjoint action of $\mathrm{PGL}(3; \mathbb{C}),$ then $v$ is a holomorphic submersion. Thus we can choose locally a holomorphic section $M \colon Z \rightarrow \mathrm{PGL}(3; \mathbb{C})$ of $v$ near $\varphi f$ such that $M(\varphi f)=\mathrm{id}.$ For every $z$ in $Z,$ if $\varphi_z f$ is the birational map corresponding to $z,$ then $\varphi_z f=M(z)\, \varphi f M(z)^{-1}.$ We use now the essential assumption: $\widehat{\xi}_1$ and~$\widehat{\xi}_2$ correspond to a \textit{minimal} desingularization of $f.$ By Lemma \ref{minimal}, we obtain that for any $z$ in $Z,$ $M(z)\widehat{\xi}_1=\widehat{\xi}_1$ and $M(z)\varphi\widehat{\xi}_2=\varphi_z \widehat{\xi}_2.$ Thus, $\psi(z)=M(z)\psi(\varphi f)$ and this implies by Theorem \ref{critere} that $\mathfrak{m}(\mathfrak{X})\leq \mathrm{codim}\, Z=\mathfrak{m}(\widetilde{U}).$
\end{proof}

\vspace*{8mm}

\bibliographystyle{alpha}
\bibliography{indiana}
\nocite{*}

\end{document}